\newtheorem{theorem}{Theorem}
\newtheorem{lemma}{Lemma}
\newtheorem{definition}{Definition}
\newtheorem{proof}{Proof}
\DeclareMathAlphabet{\pazocal}{OMS}{zplm}{m}{n}
\newcommand{\lJump}{[\![}
\newcommand{\rJump}{]\!]}
\definecolor{mycolor}{rgb}{0.122, 0.435, 0.698}
\definecolor{blue-green}{rgb}{0.0, 0.87, 0.87}
\definecolor{royalblue}{rgb}{0.01, 0.28, 1.0}
\definecolor{bluet}{rgb}{0.1, 0.1, 0.8}
\newmdenv[innerlinewidth=1pt, roundcorner=0.5pt,linecolor=mycolor,innerleftmargin=0.5pt,
innerrightmargin=0.5pt,innertopmargin=0.4pt,innerbottommargin=0.4pt]{mybox}
\DeclareMathAlphabet{\pazocal}{OMS}{zplm}{m}{n}
\newcommand{\ba}{\begin{array}}
\newcommand{\ea}{\end{array}}
\newcommand{\be}{\begin{equation}}
\newcommand{\ee}{\end{equation}}
\newcommand{\ben}{\begin{equation*}}
\newcommand{\een}{\end{equation*}}
\newcommand{\bd}{\begin{displaymath}}
\newcommand{\ed}{\end{displaymath}}
\newcommand{\bi}{\begin{itemize}}
\newcommand{\ei}{\end{itemize}}
\newcommand{\bn}{\begin{enumerate}}
\newcommand{\en}{\end{enumerate}}
\newcommand{\p}{\partial}
\newcommand{\f}{\frac}
\newcommand{\mb}{\mathbf}
\newcommand{\fou}{\widetilde}
\newcommand{\sign}{\text{sign}}
\newtheorem{remark}{Remark}
\newcommand{\sw}[1]{{\color{black} \textit{#1}}}
\newcommand{\kd}[1]{{\color{black} \textit{#1}}}
\newcommand{\rev}[1]{{\color{black}#1}}
\newcommand{\revv}[1]{{\color{black}#1}}
\newcommand{\Sx}{S_x}
\newcommand{\Sy}{S_y}
\newcommand{\Fh}{\mathcal{H}}
\begin{document}

\title{On the stability analysis of perfectly matched layer for the elastic wave equation in layered media}
\author{
  Kenneth Duru\thanks{Mathematical Sciences Institute, Australian National University, Australia and Department of Mathematical Sciences, University of Texas at El Paso, TX, USA
  ({kenneth.duru@anu.edu.au}).}
  \and Balaje Kalyanaraman\thanks{Department of Computing Science and Department of Mathematics and Mathematical Statistics, Ume{\aa} University, Ume\aa,  Sweden
  ({balaje.kalyanaraman@umu.se}).}
  \and Siyang Wang\thanks{Department of Mathematics and Mathematical Statistics, Ume{\aa} University, Ume\aa,  Sweden
  ({siyang.wang@umu.se}). Corresponding author.}}
\maketitle

\begin{abstract}
In this paper, we present the stability analysis of the perfectly matched layer (PML) in two-space dimensional layered elastic media. Using normal mode analysis we prove that all interface wave modes present at a planar interface of  bi-material elastic solids  are dissipated by the PML. Our analysis builds upon the ideas presented in [SIAM Journal on Numerical Analysis 52 (2014) 2883–2904] and extends the stability results of  boundary waves (such as Rayleigh waves) on a half-plane elastic solid to   interface wave modes (such as Stoneley waves) transmitted into the PML at a planar interface separating two half-plane elastic solids. Numerical experiments in two-layer and multi-layer elastic solids corroborate the theoretical analysis, and generalise the results to complex elastic media. Numerical examples using the Marmousi model demonstrates the utility of the PML and our numerical method for seismological applications.
\end{abstract}

\noindent \textbf{Keywords}: Elastic waves, perfectly matched layer, interface wave modes, stability, Laplace transforms, normal mode analysis

\noindent \textbf{ AMS}: 65M06, 65M12

\section{Introduction}
\label{s:1}

Wave motion is prevalent in many applications and has a great impact on our daily lives. 
Examples include the use of seismic waves \cite{Aki_and_Richards_1981,10.1785/BSSA0530010217B} to image natural resources in the Earth's subsurface, to detect cracks and faults in structures, to monitor underground explosions, and to investigate strong ground motions from earthquakes. 

Most wave propagation problems are formulated in large or infinite domains. However, because of limited computational resources, numerical simulations must be restricted to smaller computational domains by introducing artificial boundaries. Therefore, reliable and efficient domain truncation techniques that significantly minimise artificial reflections are important for the development of effective numerical wave solvers.
A straightforward approach to construct a domain truncation procedure is to surround the computational domain with an absorbing layer of finite thickness such that outgoing waves are absorbed. For this approach to be effective, all outgoing waves entering the layer should decay without  reflections, regardless of the frequency and angle of incidence. An absorbing layer with this desirable property is called a perfectly matched layer (PML) \cite{BERENGER1994185,Chew_and_Weedon,Kuzuoglu_and_Mittra,Duru2014K,Duru2012JSC,BECACHE2003399,APPELO2006642,Baffet2019,BecaheKachanovska2021}. 

The PML was first derived for electromagnetic waves in the pioneering work \cite{BERENGER1994185,Chew_and_Weedon} but has since then been extended  to other applications, for example acoustic and elastic waves \cite{Duru2014K,Duru2012JSC,BECACHE2003399,APPELO2006642,Baffet2019,BecaheKachanovska2021}. 
The PML has gained popularity because of its effective absorption properties, versatility, simplicity, ease of derivation and implementation using standard numerical methods. A stable PML model, when effectively implemented in a numerical solver, can yield a domain truncation scheme that ensures the convergence of the numerical solution to the solution of the unbounded problem \cite{Baffet2019,DURU2019898,ElasticDG_PML2019}.
However, the PML is also notorious for supporting fatal instabilities which can destroy the accuracy of numerical solutions. These undesirable exponentially growing modes can be present in both the PML model at the continuous level or numerical methods at the discrete level.  

The stability analysis of the PML has attracted substantial attention in the literature, see for example \cite{Duru2014K,Duru2012JSC,BECACHE2003399,APPELO2006642,Baffet2019,BecaheKachanovska2021}, and \cite{KDuru_and_GKreiss_Review_2022} for a recent review. 
%
%
For hyperbolic PDEs, mode analysis for PML initial value problems (IVP) with constant damping and constant material properties  yields a necessary geometric stability condition \cite{BECACHE2003399}. When this condition is violated, exponentially growing modes in time are present, rendering the PML model useless. In certain cases, for example the acoustic wave equation with constant coefficients, analytical solutions can be derived \cite{BecaheKachanovska2021,DIAZ20063820}. In addition, energy estimates for the PML have recently been derived in physical space 
\cite{Baffet2019} and Laplace space \cite{DURU2019898,ElasticDG_PML2019}, which can be useful for deriving stable numerical methods. 
However, in general, even if the PML IVP does not support growing modes, there can still be stability issues when  boundaries and material interfaces are introduced. For the extension of mode stability analysis to boundary and guided waves in homogeneous media, see \cite{Duru2014K,DURU2015372,Duru2012JSC, DURU2014445}. The stability analysis of the PML in discontinuous acoustic  media  was presented in  \cite{DURU2014757}. To the best of our knowledge, the stability analysis of the PML for more general wave media such as the discontinuous or layered elastic solids has not been reported in literature. 

In geophysical and seismological applications, the wave media can be composed of layers of rocks, soft and hard sediments, bedrock layers,   water and possibly oil. In layered elastic media, the presence of interface wave modes such as Stoneley waves \cite{doi:10.1098/rspa.1924.0079,10.1785/BSSA0530010217B}, makes the stability analysis of the PML more challenging. 
Numerical experiments have also reported PML instabilities and poor performance for problems with  material boundaries entering into the layer and problems with strong evanescent waves \cite{APPELO20094200}.  These existing results have motivated this study to investigate where the  inadequacies of the PML arise.  

The main objective of this study is to analyse the stability of interface wave modes for the PML in discontinuous elastic solids. Using normal mode analysis, we prove that  if the PML IVP has no temporally growing modes, then all interface wave modes present at a planar interface of  bi-material elastic solids  are dissipated by the PML. The analysis closely follows the steps taken in \cite{Duru2014K} for  boundary waves modes, but here we apply the techniques to investigate the stability of interface wave modes in the PML.  Numerical experiments in two-layered isotropic and anisotropic elastic solids, and a multi-layered isotropic elastic solid corroborate the theoretical analysis. Furthermore, we present numerical examples using the Marmousi model \cite{marmousi2}, extending the results to complex elastic media and demonstrating the utility of the PML and our numerical method for seismological applications.

The remainder of the paper proceeds as follows. In section~\ref{sec:intro} we present the elastic wave equation in discontinuous media, define interface conditions and discuss energy stability for the model problem. 
In section \ref{sec:mode_analysis}, we introduce the mode analysis for body and interface wave modes, and formulate the determinant condition that is necessary for stability. The PML model is derived in section~\ref{sec:pml_model}. In section \ref{sec:stability_pml}, we present the stability analysis of the PML in a piecewise constant elastic medium and formulate the main results. Numerical examples are given in section \ref{sec:num_exp}, corroborating the theoretical analysis. In section~\ref{sec:conclusions}, we draw conclusions.

\section{The elastic wave equation in discontinuous media}
\label{sec:intro}
Consider the  2D elastic wave equation in Cartesian coordinates $(x,y) \in \Omega \subseteq \mathbb{R}^2$,
{
\begin{align}
    \rho\frac{\partial^2 {\mathbf{u}}}{\partial t^2} & = 
    \frac{\partial}{\partial x}{\mathbf{T}}_x + 
        \frac{\partial}{\partial y}{\mathbf{T}}_y, \quad {\mathbf{T}}_x  = A \frac{\partial {\mathbf{u}}}{\partial x} + C \frac{\partial {\mathbf{u}}}{\partial y},
\quad
{\mathbf{T}}_y = C^T \frac{\partial {\mathbf{u}}}{\partial x} + B \frac{\partial {\mathbf{u}}}{\partial y},
        \label{pde1}
\end{align}}%
where $\mathbf{u} = [u_1, u_2] \in \mathbb{R}^2$ is the displacement vector,  ${\mathbf{T}}_x, {\mathbf{T}}_y \in \mathbb{R}^2$ are the   stress vectors and $t\ge 0$ denotes time. 
We set the smooth initial conditions
{
\begin{align}\label{initial_con}
\mb{u}|_{t =0} = \mb{f}\in H^1{(\Omega)}, \quad \f{\p \mb{u}}{\p t}|_{t =0} = \mb{g}\in L^2{(\Omega)}.
\end{align}
}
The medium parameters are described by the density $\rho >0$ and the coefficient matrices $A$, $B$, $C$ of elastic constants.  In 2D orthotropic elastic media, the elastic coefficients are described by four independent parameters $c_{11}$, $c_{22}$, $c_{33}, c_{12}$ and the coefficient matrices are given by 
\begin{equation}\label{mp}
A = \begin{bmatrix}
c_{11} & 0 \\
0 & c_{33}
\end{bmatrix}, \quad B = \begin{bmatrix}
c_{33} & 0 \\
0 & c_{22}
\end{bmatrix}, \quad C = \begin{bmatrix}
0 & c_{12} \\
c_{33} & 0
\end{bmatrix},
\end{equation}
Here, the material coefficients $c_{11}$, $c_{22}$, $c_{33}$ are always positive, but $c_{12}$ may be negative for certain materials. In general, for stability, we require
\begin{align}\label{elastic_contants_ellipticity}
    c_{11} > 0, \quad  c_{22} >0, \quad  c_{33} >0, \quad c_{11}c_{22}-c_{12}^2>0.
\end{align}
For planar waves propagating along the $x$-direction and $y-$direction, the $p$-wave speeds and $s$-wave speeds are given by 
\begin{align}\label{elastic_wave_speeds_an}
     c_{px}:= \sqrt{\frac{c_{11}}{\rho}}, \quad  c_{sx}:= \sqrt{\frac{c_{33}}{\rho}}, \quad c_{py}:= \sqrt{\frac{c_{22}}{\rho}}, \quad  c_{sy}:= \sqrt{\frac{c_{33}}{\rho}}.
\end{align}

In the case of isotropic media, the material properties can be described by only two Lam\'{e} parameters, $\mu >0$ and $\lambda$, such that $c_{11}=c_{22}=2\mu+\lambda$, $c_{33}=\mu>0$, $c_{12}=\lambda > - \mu$, yielding 
\begin{equation}\label{mp_isotropic}
A = \begin{bmatrix}
2\mu+\lambda & 0 \\
0 & \mu
\end{bmatrix}, \quad B = \begin{bmatrix}
\mu & 0 \\
0 & 2\mu+\lambda
\end{bmatrix}, \quad C = \begin{bmatrix}
0 & \lambda \\
\mu & 0
\end{bmatrix},
\end{equation}
with the wave speeds 
\begin{align}\label{elastic_wave_speeds_iso}
     c_{p}:= \sqrt{\frac{2\mu+\lambda}{\rho}}, \quad  c_{s}:= \sqrt{\frac{\mu}{\rho}}.
\end{align}
In isotropic media, a wave mode  propagates with the same wave speed in all directions.

In general, the elastic medium modeling the solid Earth can have a layered structure with piecewise smooth material property. At material discontinuities, we define the outward unit normal vector $\mathbf{n}= (n_x, n_y)^T\in \mathbb{R}^2$  on the interface, and the traction vector
$$
{\mathbf{T}} = n_x {\mathbf{T}}_x + n_y{\mathbf{T}}_y.
$$ 
Then, we enforce the interface conditions such that the displacement and traction are continuous,
\begin{align}
\label{int_con}
    \lJump{{\mathbf{T}} \rJump} =0, \quad \lJump{{\mathbf{u}} \rJump} =0.
\end{align}
Note that if the discontinuity lies on the 
\sw{$x$-axis ($y$-axis)}, we have $(n_x, n_y) = (0, 1)$  and ${\mathbf{T}} = {\mathbf{T}}_y$ ($(n_x, n_y) = (1, 0)$ and ${\mathbf{T}}  ={\mathbf{T}}_x$).

We introduce the strain-energy matrix
\begin{align}\label{eq:energy_matrix}
P = 
\begin{bmatrix}
A & C\\
C^T & B
\end{bmatrix}.
\end{align}
The symmetric strain-energy matrix $P$ is positive semi-definite \cite{Lai1993}.
We define the mechanical energy in the medium $\Omega$ by
\begin{align}\label{eq:energy_continuous}
E(t) = \frac{1}{2}\int_{\Omega}\left(
\rho\left(\f{\p\mb{u}}{\p t}\right)^T\left(\f{\p\mb{u}}{\p t}\right) +
\begin{bmatrix}
\f{\p\mb{u}}{\p x}\\
\f{\p\mb{u}}{\p y}
\end{bmatrix}^T 
P
\begin{bmatrix}
\f{\p\mb{u}}{\p x}\\
\f{\p\mb{u}}{\p y}
\end{bmatrix}\right)dxdy.
\end{align}
The following theorem states a stability result for the coupled problem, \eqref{pde1}--\eqref{initial_con} and \eqref{int_con}. 
\begin{theorem}\label{Thm_energy}
\sw{Consider the elastic wave equation \eqref{pde1} with piecewise smooth material parameters defined on the whole plane $\Omega = \mathbb{R}^2$, subject to the initial condition \eqref{initial_con}, and the interface conditions \eqref{int_con} at material discontinuities. Assuming the decay condition that for any fixed $t>0$, the solution $|\mathbf{u}(x,y,t)| \to 0$ for $|(x,y)| \to +\infty$, the total mechanical energy $E(t)\geq 0$ given in \eqref{eq:energy_continuous} is conserved, i.e., $E(t)=E(0)$ for all $t\ge 0$.}
\end{theorem}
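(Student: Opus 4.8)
The plan is to establish conservation by differentiating $E(t)$ in time and showing that $dE/dt = 0$, with essentially all of the work concentrated in the boundary and interface terms produced by integration by parts. The non-negativity $E(t) \ge 0$ is immediate: since $\rho > 0$ and the strain-energy matrix $P$ is symmetric positive semi-definite (as stated above, following \cite{Lai1993}), both the kinetic and the strain-energy integrands in \eqref{eq:energy_continuous} are pointwise non-negative.

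First I would differentiate \eqref{eq:energy_continuous} under the integral sign. Using the product rule and the symmetry of $P$, and identifying $P\begin{bmatrix}\mb{u}_x \\ \mb{u}_y\end{bmatrix} = \begin{bmatrix}\mb{T}_x \\ \mb{T}_y\end{bmatrix}$ through the constitutive relations in \eqref{pde1}, the time derivative becomes
\[
\frac{dE}{dt} = \int_\Omega \left(\rho\, \mb{u}_t^T \mb{u}_{tt} + \mb{u}_{tx}^T \mb{T}_x + \mb{u}_{ty}^T \mb{T}_y\right)\,dx\,dy.
\]
Substituting the governing equation $\rho\,\mb{u}_{tt} = \p_x \mb{T}_x + \p_y \mb{T}_y$ into the first term, the integrand collapses into the pure divergence $\p_x(\mb{u}_t^T \mb{T}_x) + \p_y(\mb{u}_t^T \mb{T}_y)$, since $\mb{u}_{tx} = \p_x \mb{u}_t$ and likewise in $y$. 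Consequently the bulk integral reduces, via the divergence theorem, to contributions on the outer boundary $\p\Omega$ together with interface terms along each material discontinuity. The outer-boundary contributions vanish by the decay condition at infinity.

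The crux of the argument is the interface terms. Since a discontinuity splits $\Omega$ into subdomains, the divergence theorem must be applied on each side separately, and the outward normals from the two sides are antiparallel along the shared interface. The two one-sided flux contributions therefore assemble into a single jump term $\int \lJump \mb{u}_t^T \mb{T}\rJump\, ds$ over the interface, where $\mb{T} = n_x \mb{T}_x + n_y \mb{T}_y$ is the traction. I would then invoke the interface conditions \eqref{int_con}: continuity of displacement $\lJump \mb{u}\rJump = 0$ forces the velocity $\mb{u}_t$ to be single-valued across the interface, while $\lJump \mb{T}\rJump = 0$ makes the traction single-valued, so the jump $\lJump \mb{u}_t^T \mb{T}\rJump$ vanishes. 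With both the outer-boundary and the interface terms eliminated, $dE/dt = 0$, and integrating in time gives $E(t) = E(0)$.

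I expect the main obstacle to be the careful bookkeeping of the interface terms: one must track the sign convention of the outward normal on each subdomain and verify that the two one-sided traction contributions combine into precisely $\lJump \mb{u}_t^T \mb{T}\rJump$. This is where the exact form of the interface conditions \eqref{int_con} is essential—they are tailored to annihilate the interface flux—so this step is simultaneously the technical heart and the conceptual message of the theorem.
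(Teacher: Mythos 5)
Your proof is correct and is precisely the standard energy argument the paper has in mind: it omits the proof, stating only that it "is standard and can be adapted from \cite{Duru2014V}", and that standard proof is the one you give (differentiate $E$, use the constitutive relation to collapse the integrand to a divergence, and kill the boundary and interface fluxes via the decay condition and \eqref{int_con}). No substantive differences to report.
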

The proof of Theorem \ref{Thm_energy} is standard and can be adapted from \cite{Duru2014V}. 
We say that the problem, \eqref{pde1}--\eqref{initial_con} and \eqref{int_con}, is energy-stable if $E(t)\le E(0)$ for all $t\ge 0$. \sw{Note that  the interface conditions \eqref{int_con} specifies the minimal number of coupling conditions,  thus  the energy-stability $E(t)\le E(0)$    also translates to the well-posedness of the model.}

\section{Layered elastic media and mode analysis}\label{sec:mode_analysis}
As we will see later in this paper, when the PML is introduced  the equations will become asymmetric and the energy method may not be applicable to establishing the stability of the model. Next we will consider layered elastic media by assuming variations of media parameters along the $y$-axis only and introduce the mode analysis which will be useful in analysing the stability of the PML in layered elastic media. A similar approach was taken in \cite{Duru2014K} to analyse the stability of  boundary waves modes in the PML, but here we apply the techniques to investigate the stability of interface elastic wave modes, such as Stoneley waves, in the PML. 
\subsection{Layered elastic media}
In the coming analysis we will focus on stratified elastic media with discontinuities along the \revv{$x$-axis}. 
In particular, we will consider the 2D elastic wave equation \eqref{pde1} in the two  half-planes $\Omega = \Omega_1 \cup \Omega_2$, with $\Omega_1 = (-\infty, \infty)\times(0, \infty)$ and $\Omega_2 = (-\infty, \infty)\times(-\infty, 0)$, and a discontinuous planar interface at $y =0$.
In each half-plane $\Omega_i$, for $i = 1,2$, we denote the displacement field \sw{by} $\mb{u}_i$ and the piecewise constant medium parameters \sw{by} $\rho_i >0$  $A_i$, $B_i$, $C_i$.
At the material interface $y=0$, 
\sw{interface conditions \eqref{int_con} have} a more explicit form
\begin{equation}\label{int_con_2}
B_1\f{\p\mb{u}_1}{\p y}+C_1^T\f{\p\mb{u}_1}{\p x}=B_2\f{\p\mb{u}_2}{\p y}+C_2^T\f{\p\mb{u}_2}{\p x}, \quad \mb{u}_1=\mb{u}_2.
\end{equation}
\subsection{Mode analysis}
Theorem \ref{Thm_energy} proves energy stability of the elastic wave equation \eqref{pde1} in piece-wise smooth elastic media $\Omega_i$, for $i = 1, 2$, subject to the interface condition \eqref{int_con} or \eqref{int_con_2}. However, the theorem does not provide information about the wave modes that may exist in the medium. In this section, we use mode analysis to gain insights on the existence of possible wave modes. More precisely, we start by considering a constant-coefficient problem for the existence of body waves. After that, we analyse interface waves in media with piecewise constant material property and formulate a stability result in the framework of normal mode analysis. This mode analysis framework will be useful when proving the stability of the PML at the presence of interface wave modes. 

\subsubsection{Plane waves and dispersion relations}\label{sec:dispersion_relation}
To study the existence of body wave modes, we consider the problem \eqref{pde1} in the whole real plane  $(x,y) \in \mathbb{R}^2$ with constant medium parameters, 
$$
\rho_i = \rho, \quad A_i = A, \quad B_{i} = B, \quad C_i = C,
$$
for $\Omega_i$, $i = 1, 2$. 
\kd{In this case, we do not consider $y=0$ as a material interface.}

Consider the wave-like solution
\begin{align}\label{eq:plane_wave}
\mathbf{u}\left(x, y, t\right) = \mathbf{u}_0 e^{st+\bm{i}\left( k_x x  + k_y y \right)}, \quad \mathbf{u}_0 \in \mathbb{R}^2, \quad k_x, k_y, x, y \in \mathbb{R}, \quad t\ge 0, \quad \bm{i} = \sqrt{-1}.
\end{align}
In \eqref{eq:plane_wave}, $\mathbf{k}=\left(k_x, k_y\right) \in \mathbb{R}^2$ is the wave vector, and $ \mathbf{u}_0\in \mathbb{R}^2$ is a vector of constant amplitude
called the polarization vector. 
By inserting \eqref{eq:plane_wave} into \eqref{pde1}, we have the eigenvalue problem
\begin{align}\label{body_wave_speeds_eig}
-s^2\mathbf{u}_0 = \mathcal{P}(\mathbf{k})\mathbf{u}_0, \quad  \mathcal{P}(\mathbf{k}) = \frac{k_x^2 A + k_y^2 B + k_xk_y(C + C^T)}{\rho}.
\end{align}
The polarisation vector $\mathbf{u}_0 \in \mathbb{R}^2$ is an eigenvector of the matrix $\mathcal{P}(\mathbf{k})$ and $-s^2$ is the corresponding eigenvalue.
For problems that are energy conserving,  the matrix $\mathcal{P}(\mathbf{k})$ is symmetric positive definite for all $\mathbf{k} \in \mathbb{R}^2$. Thus, the eigenvectors $\mathbf{u}_0$ of $\mathcal{P}(\mathbf{k})$ are orthogonal and the eigenvalues are real and positive, $-s^2 >0$.   
The wave-mode \eqref{eq:plane_wave}, defined by the eigenpair $s, \mathbf{u}_0$, is a solution of the elastic wave equation \eqref{pde1} in the whole plane  $(x,y) \in \mathbb{R}^2$ if $s$ and $\mathbf{k}$ satisfy the dispersion relation 
\begin{align}\label{eq:dispersion_relation_f}
&F\left(s,  \mathbf{k}\right):= \det\left({s^2} I +  \mathcal{P}(\mathbf{k})\right) = 0.
\end{align}
Evaluating the determinant and simplifying further, we obtain 
{
\footnotesize
\begin{align}\label{eq:dispersion_orthotropic}
F\left(s,  \mathbf{k}\right) = s^4 + \frac{\left(c_{11} + c_{33}\right)k_x^2+\left(c_{22} + c_{33}\right)k_y^2}{\rho} s^2
+
\frac{c_{11}c_{33}k_x^4 +  c_{22}c_{33}k_y^4 + \left(c_{11}c_{22} + c_{33}^2 - \left(c_{33}+c_{12}\right)^2\right)k_x^2k_y^2}{\rho^2} = 0.
\end{align}
}
In an isotropic medium, with $c_{11}=c_{22}=2\mu+\lambda$, $c_{33}=\mu>0$, $c_{12}=\lambda > - \mu$, the dispersion relation simplifies to
\begin{align}\label{eq:dispersion_isotropic}
F\left(s,  \mathbf{k}\right)=\left({s^2} + c_p^2|\mathbf{k}|^2\right)\left({s^2}+ c_s^2|\mathbf{k}|^2\right)=0, \quad 
     c_p= \sqrt{\frac{2\mu + \lambda}{\rho}}, \quad  c_s= \sqrt{\frac{\mu}{\rho}}, \quad |\mathbf{k}| = \sqrt{k_x^2 + k_y^2}.
\end{align}
Then, the eigenvalues are given by
\begin{equation}\label{eq:eigenvalues_isotropic}
\begin{split}
-s_1^2 = c_p^2|\mathbf{k}|^2, \quad -s_2^2= c_s^2|\mathbf{k}|^2,
\end{split}
\end{equation}
which correspond to the P-wave and S-wave propagating in the medium.
In linear orthotropic  elastic media, the eigenvalues  $-s^2$ also have  closed form expressions
{
\footnotesize
\begin{equation}\label{eq:eigenvalues_orthotropic}
\begin{split}
-s_1^2 &= \frac{1}{2{\rho}}\left({(c_{11}+c_{33})k_x^2 + (c_{22}+c_{33})k_y^2}\right)\\
&+
\frac{1}{2{\rho}}\sqrt{\left({(c_{11}+c_{33})k_x^2 + (c_{22}+c_{33})k_y^2}\right)^2 -4\left(\left(c_{11}c_{33}k_x^4
+c_{22}c_{33}k_y^4\right)+ \left(c_{11}c_{22} +c_{33}^2-\left(c_{12} +c_{33}\right)^2k_x^2k_y^2\right)\right)},\\
-s_2^2 &= \frac{1}{2{\rho}}\left({(c_{11}+c_{33})k_x^2 + (c_{22}+c_{33})k_y^2}\right)\\
&-
\frac{1}{2{\rho}}\sqrt{\left({(c_{11}+c_{33})k_x^2 + (c_{22}+c_{33})k_y^2}\right)^2 -4\left(\left(c_{11}c_{33}k_x^4
+c_{22}c_{33}k_y^4\right)+ \left(c_{11}c_{22} +c_{33}^2-\left(c_{12} +c_{33}\right)^2k_x^2k_y^2\right)\right)}.
\end{split}
\end{equation}
}
Using the stability conditions \eqref{elastic_contants_ellipticity}, it is easy to check that  the two eigenvalues are strictly positive, that $-s_j^2 > 0$ for $j = 1,2$.
These  two eigenvalues again indicate two body-wave modes, corresponding to the quasi-P waves and the quasi-S waves. 

\begin{remark}\label{remark:imaginaryroots}
The indeterminate $s\in \mathbb{C}$  that solves the dispersion relation \eqref{eq:dispersion_relation_f} is related to the temporal frequency.
Since Theorem \ref{Thm_energy} holds for all stable  medium parameters, the whole plane problem \eqref{pde1} conserves energy. 
Thus, the real part of the roots $s$ must be zero, that is $s\in \mathbb{C}$ with $\Re{s} =0$.
Otherwise, if the roots $s$ have non-zero real parts then the energy will grow or decay, contradicting Theorem \ref{Thm_energy}.
\end{remark}

We write $s =  \bm{i}\omega$, where $\omega \in \mathbb{R}$ is called the temporal frequency, and introduce
\begin{align}\label{KVSV}
\begin{split}
&\mathbf{K} = \left(\frac{k_x}{|\mathbf{k}|}, \frac{k_y}{|\mathbf{k}|}\right), \quad \text{normalised propagation direction},\\
&\mathbf{V}_p = \left(\frac{\omega}{k_x}, \frac{\omega}{k_y}\right), \quad \text{phase velocity},\\
&\mathbf{\mathcal{S}} = \left(\frac{k_x}{\omega}, \frac{k_y}{\omega}\right), \quad \text{slowness vector},\\
&\mathbf{V}_g = \left(\frac{\partial \omega}{\partial k_x}, \frac{\partial \omega}{\partial k_y}\right), \quad \text{group velocity}.
\end{split}
\end{align}
For the Cauchy problem in a constant coefficient medium, the dispersion relation $F\left(i\omega,  \mathbf{k}\right)=0$   and the quantities $\mathbf{K}$, $\mathbf{V}_p $, $\mathbf{\mathcal{S}}$, $\mathbf{V}_g$, defined above give detailed description of the wave propagation properties in the medium. In addition, they determine a stability property for the corresponding PML model, which is discussed in section 5.1. 
In Figure \ref{fig:dispersion_relation}, we plot the dispersion relations of two different elastic solids, showing the slowness diagrams.
\begin{figure}[ht]
 \centering
%
%
%
\begin{subfigure}{0.48\textwidth}
    \includegraphics[width=\textwidth]{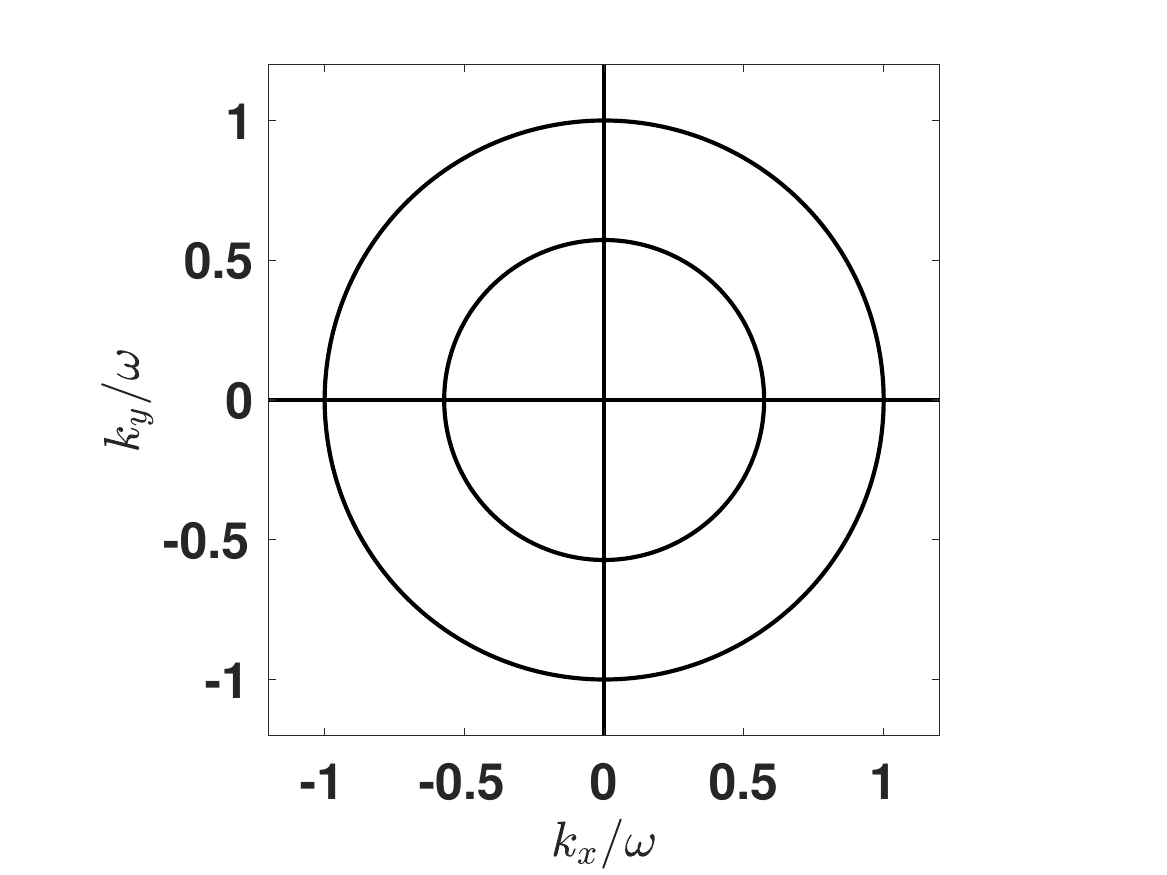}
\end{subfigure}
\begin{subfigure}{0.48\textwidth}
    \includegraphics[width=\textwidth]{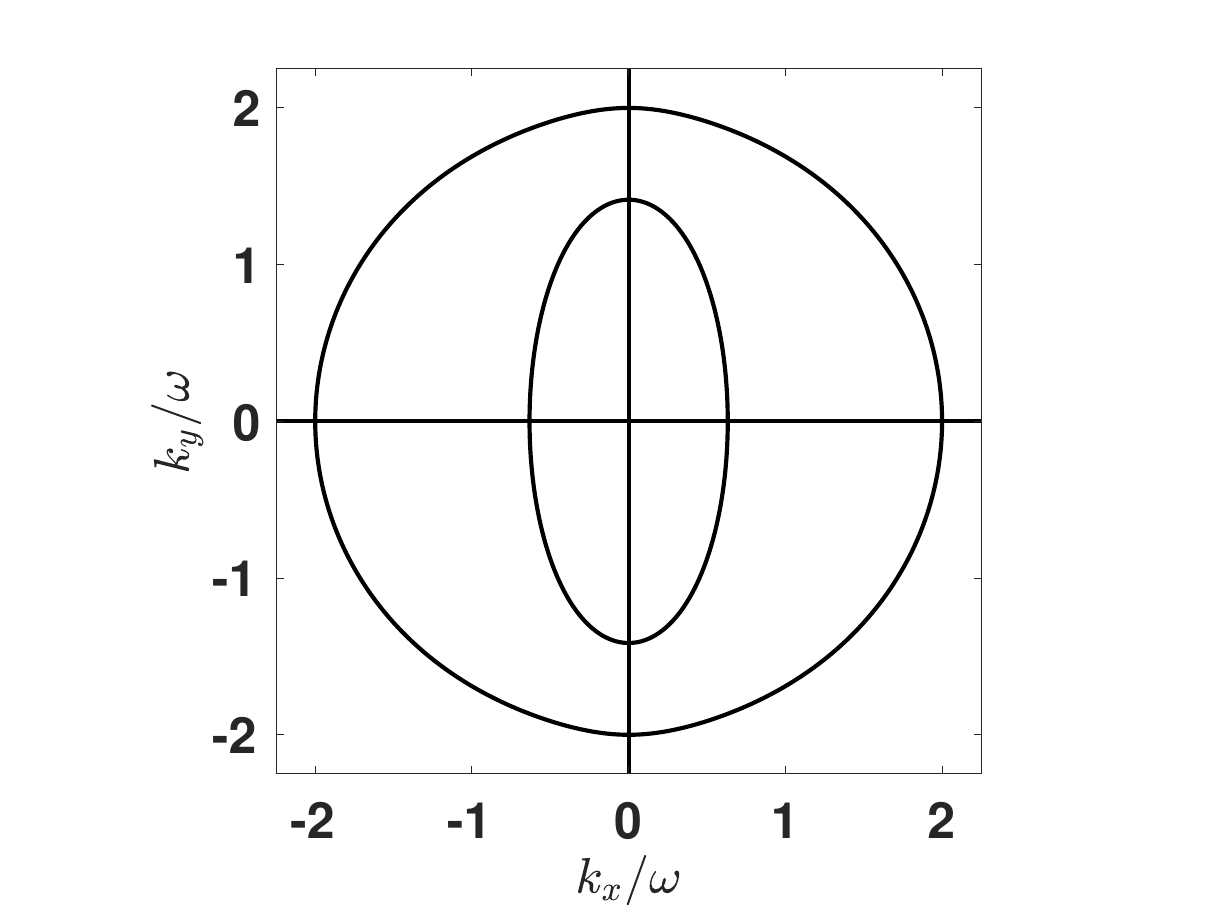}
\end{subfigure}
\caption{Slowness diagrams showing the solutions of the dispersion relations, $F\left(i,  \mathbf{S}\right) = 0$,  for an isotropic and anisotropic  elastic wave media in two space dimensions. For each figure, the two  curves correspond to S-wave and P-wave modes, respectively.}
 \label{fig:dispersion_relation}
\end{figure}

When  boundaries and interfaces are present, additional boundary and interface wave modes, such as Rayleigh \cite{https://doi.org/10.1112/plms/s1-17.1.4}  and Stoneley waves \cite{doi:10.1098/rspa.1924.0079,10.1785/BSSA0530010217B}, are introduced. 
In the following, we consider the problem in two half-planes coupled together at a planar interface and  formulate an alternative procedure to characterise the stability property of interface wave modes. 

\subsubsection{Normal mode analysis and the determinant condition}\label{sec:normal_modes_analysis_interface}
Here, we present the normal mode analysis for  interface wave modes in discontinuous media, which tightly connects to the analysis of the PML in the next section. We note that a similar approach was taken in \cite{Duru2014K} to investigate the stability of boundary waves in a half plane elastic solid.
However, we will include the details here to ensure a self-contained narrative.
To begin, we consider piecewise constant media parameters
$$
\rho_i >0, \quad A_i, \quad B_{i}, \quad C_i,
$$
for $\Omega_i$, $i = 1, 2$, where $A_i, B_{i}, C_i$ are given in \eqref{mp}. The material parameters are constant in each half-plane, but are discontinuous  at the interface  $y =0$,  where the equations \eqref{pde1} are coupled by the interface condition  \eqref{int_con}. We look for  wave solutions in the form
\begin{align}\label{eq:simple_wave}
\mathbf{u}_i\left(x, y, t\right) = \boldsymbol{\phi}_i(y) e^{st+ \bm{i} k_x x   }, \quad \|\boldsymbol{\phi}_i\| < \infty, \quad k_x\in \mathbb{R}, \quad (x, y) \in \Omega_i, \quad t\ge 0.
\end{align}



\sw{Again the wave modes \eqref{eq:simple_wave}, defined by the eigenpairs $s, \boldsymbol{\phi}_i(y)$, are the solutions of the half-plane problems. The eigenpairs $s, \boldsymbol{\phi}_i(y)$ will be determined by solving a nonlinear eigenvalue problem.}
The variable $s$ is related to the well-posedness and stability properties of the model, which are defined as follows.
\begin{definition}\label{def:well-posedness_and_stability}
    \sw{
    An initial boundary value problem with the solution \eqref{eq:simple_wave} is well-posed if the variables $\revv{s(k_x) \in \mathbb{C}}$ and $k_x \in \mathbb{R}$  satisfy
    \[
\lim_{|k_x|\rightarrow\infty} \frac{\Re{s}}{|k_x|} \leq 0.
    \]
    Otherwise, the problem is ill-posed. In addition, we call the solution \eqref{eq:simple_wave} stable if $\Re{s} \leq 0$ for all $k_x \in \mathbb{R}$. }
\end{definition}
\sw{By Definition \ref{def:well-posedness_and_stability}, a well-posed problem can potentially allow a bounded exponentially growing solution of the form \eqref{eq:simple_wave}. However, a stable problem excludes exponentially growing solution of the form \eqref{eq:simple_wave}, and must require that $\Re{s} \leq 0$ for all $k_x \in \mathbb{R}$. }

\sw{Note in particular that because the elastic wave equations \eqref{pde1} has no lower order terms we can make a more precise statement about the well-posedness of the transmission problem, that is the elastic wave equation  \eqref{pde1} with the interface condition  \eqref{int_con} or \eqref{int_con_2}.}
\begin{lemma}\label{def:well-posedness}
The elastic wave equations \eqref{pde1} with piecewise constant media parameters \eqref{mp}  and the interface condition  \eqref{int_con} or \eqref{int_con_2} are not \sw{well-posed} in any sense if there are nontrivial solutions of the form \eqref{eq:simple_wave} with $\Re{s} >0$. 
\end{lemma}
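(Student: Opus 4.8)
The plan is to exploit the scale invariance of the coupled problem to turn a single unstable mode into a family of modes whose temporal growth rates are unbounded, thereby ruling out any well-posedness estimate. First I would observe that the equations \eqref{pde1}, being second order, homogeneous (no lower-order terms), and constant-coefficient within each half-plane, together with the interface conditions \eqref{int_con_2} (homogeneous of first order in the derivatives and imposing continuity across the fixed interface), are invariant under the scaling $(x,y,t)\mapsto(\alpha x,\alpha y,\alpha t)$ for every $\alpha>0$. Indeed, both sides of \eqref{pde1} scale by $\alpha^{2}$, the two conditions in \eqref{int_con_2} scale homogeneously, the piecewise-constant coefficient structure is preserved since the half-planes $\Omega_1,\Omega_2$ and the interface $\{y=0\}$ each map to themselves. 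Hence if $\mathbf{u}_i$ solves the coupled problem then so does $\mathbf{u}_i(\alpha x,\alpha y,\alpha t)$.

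Starting from the hypothesised nontrivial solution $\mathbf{u}_i=\boldsymbol{\phi}_i(y)e^{st+\bm{i}k_x x}$ with $\|\boldsymbol{\phi}_i\|<\infty$ and $\Re s=\eta>0$, I would apply this scaling to produce, for each $\alpha>0$, the solution
\begin{equation*}
\mathbf{u}_i^{\alpha}(x,y,t):=\boldsymbol{\phi}_i(\alpha y)\,e^{\alpha s\,t+\bm{i}\alpha k_x x}.
\end{equation*}
Each $\mathbf{u}_i^{\alpha}$ satisfies \eqref{pde1} and the interface conditions \eqref{int_con_2}, decays in $y$ because $\boldsymbol{\phi}_i$ does, and has temporal growth rate $\Re(\alpha s)=\alpha\eta$. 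Thus letting $\alpha\to\infty$ amplifies the single unstable mode into solutions with arbitrarily fast exponential growth in time.

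The third step quantifies the failure of stability. Working in the tangential-frequency picture (fixing the oscillatory factor $e^{\bm{i}\alpha k_x x}$ and measuring the $y$-profile), the substitution $y\mapsto\alpha y$ shows that every Sobolev norm of the data $\mathbf{u}_i^{\alpha}(\cdot,0)$ and $\partial_t\mathbf{u}_i^{\alpha}(\cdot,0)$ grows only polynomially in $\alpha$, while $\|\mathbf{u}_i^{\alpha}(\cdot,t)\|_{L^2_y}=\alpha^{-1/2}e^{\alpha\eta t}\|\boldsymbol{\phi}_i\|_{L^2_y}$. Consequently, for any fixed $t>0$ and any fixed number of derivatives $m$, there is a power $N=N(m)$ with
\begin{equation*}
\frac{\|\mathbf{u}_i^{\alpha}(\cdot,t)\|_{L^2}}{\|\mathbf{u}_i^{\alpha}(\cdot,0)\|_{H^m}+\|\partial_t\mathbf{u}_i^{\alpha}(\cdot,0)\|_{H^m}}\gtrsim \alpha^{-N}e^{\alpha\eta t}\longrightarrow\infty\quad\text{as }\alpha\to\infty,
\end{equation*}
since exponential growth in $\alpha$ dominates any polynomial loss. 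Hence no estimate of the form $\|\mathbf{u}(\cdot,t)\|\le K(t)\,\|\text{data}\|_{H^m}$ can hold with finite $K(t)$ and finite $m$, which is exactly the assertion that the coupled problem is not stable in any sense.

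The main obstacle, and the point I would treat most carefully, is the norm bookkeeping: because the modes \eqref{eq:simple_wave} do not decay in $x$, their global $L^2(\mathbb{R}^2)$ norms are infinite, so the argument must be phrased either after a tangential Fourier transform in $x$ (comparing growth of the $y$-norm at fixed tangential frequency, as above) or after localisation by a smooth cutoff to build genuine finite-energy wave packets. I would also make explicit the role of the hypothesis $\|\boldsymbol{\phi}_i\|<\infty$, which guarantees that the profiles are square-integrable in $y$, so that the scaling relations for the norms are valid and the constructed family genuinely violates well-posedness.
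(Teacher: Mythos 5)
Your proposal is correct and follows essentially the same route as the paper, which disposes of this lemma in a single sentence ("we can always construct solutions that grow arbitrarily fast") with a citation to the reference for boundary waves; the dilation argument $(x,y,t)\mapsto(\alpha x,\alpha y,\alpha t)$, the resulting family $\boldsymbol{\phi}_i(\alpha y)e^{\alpha st+\bm{i}\alpha k_x x}$ with growth rates $\alpha\Re{s}\to\infty$, and the norm comparison defeating any estimate with finite derivative loss are exactly the standard construction being invoked there. Your explicit handling of the non-decay in $x$ (working per tangential frequency or via localisation) is a careful addition that the paper leaves implicit.
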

\begin{proof}
   \sw{The proof  can be adapted from the proof of Lemma 3.1 in \cite{Kreiss2012}. 
   If there is a solution of the form \eqref{eq:simple_wave} with $\Re{s} >0$, then 
   \begin{align*}
\mathbf{u}_{\gamma i}\left(x, y, t\right) = \boldsymbol{\phi}_i(\gamma y) e^{s \gamma t+ \bm{i} \gamma k_x x   }, \quad  \gamma >0,
\end{align*}
for any $\gamma >0$ is also a solution. Since $\Re{s} >0$ we can find solutions that grow arbitrarily fast exponentially.}
\end{proof}
If there are nontrivial solutions of the form \eqref{eq:simple_wave} with $\Re{s}>0$, we can always construct solutions that grow arbitrarily fast \sw{exponentially}, which is not supported by a \sw{ well-posed system. Apparently the loss of well-posedness translates to uncontrollable instability.}
Now, we reformulate Lemma \ref{def:well-posedness} as an algebraic condition, i.e. the so-called determinant condition in Laplace space \cite{Duru2014K, Gustafsson2013}.

For a complex number $z = a + \bm{i}b$, we define the branch of $\sqrt{z}$ by
$$
-\pi < \arg{(a + \bm{i}b)} \le \pi, \quad \arg{(\sqrt{a + \bm{i}b})} = \frac{1}{2} \arg{(a + \bm{i}b)}.
$$
We insert \eqref{eq:simple_wave} in the equation \eqref{pde1} and the interface condition  \eqref{int_con}, and obtain
\begin{align}
s^2 \rho_i{\boldsymbol{\phi}_i} = -k_x^2A_i \boldsymbol{\phi}_i
+B_i\frac{d^2\boldsymbol{\phi}_i}{dy^2}
+\bm{i}k_x \left(C_i + C^T_i\right) \frac{d\boldsymbol{\phi}_i}{dy},\quad i=1,2,\label{pde1_lap} \\
\boldsymbol{\phi}_1 =\boldsymbol{\phi}_2,\quad
B_1\f{d{\boldsymbol{\phi}_1}}{d y}+\bm{i}k_xC_1^T{{\boldsymbol{\phi}_1}}=B_2\f{d\boldsymbol{\phi}_2}{d y}+\bm{i}k_xC_2^T{\boldsymbol{\phi}_2},\quad  \sw{\text{at} y=0}.\label{int_con_Lap}
\end{align}
For $\boldsymbol{\phi}_i$, we seek the modal solution
\begin{equation}\label{eq:modal_solution}
\boldsymbol{\phi}_i = \boldsymbol{\Phi}_i e^{\kappa y}, \quad \boldsymbol{\Phi}_i \in \mathbb{C}^2,\quad i=1,2.
\end{equation}
Inserting the modal solution \eqref{eq:modal_solution} in  \eqref{pde1_lap}, we have the eigenvalue problem
\begin{align}\label{interface_wave_speeds_eig}
-s^2\boldsymbol{\Phi}_i = \mathcal{P}_i(k_x, \kappa)\boldsymbol{\Phi}_i, \quad  \mathcal{P}_i(k_x, \kappa) = \frac{k_x^2 A_i - \kappa^2 B_i - \bm{i}k_x\kappa(C_i + C_i^T)}{\rho_i},\quad i=1,2.
\end{align}
The solutions satisfy the condition
\begin{align}\label{eq:characteristics_f}
&F_i\left(s,  k_x, \kappa\right):= \det\left({s^2} I +  \mathcal{P}_i(k_x, \kappa)\right) = 0.
\end{align}

 For a fixed $k_x \in \mathbb{R}$ and $s$ with sufficiently large $\Re{s} > 0$,  the roots $\kappa_i$ come in pairs and have non-vanishing real parts, \cite{Duru2014K}, with
\sw{
\begin{align}\label{eq:kappa_roots}
&\kappa_{i1}^{\pm} = \pm \sqrt{\frac{\left(c_{11_i}c_{22_i}-(c_{12_i}^2 + 2c_{12_i}c_{33_i})\right)k_x^2 + (c_{11_i} + c_{33_i})^2 \rho_i s^2 - \gamma_i(s, k_x)}{2c_{22_i}c_{33_i}}},
\\
\nonumber
&\kappa_{i2}^{\pm} = \pm \sqrt{\frac{\left(c_{11_i}c_{22_i}-(c_{12_i}^2 + 2c_{12_i}c_{33_i})\right)k_x^2 + (c_{11_i} + c_{33})^2 \rho_i s^2 + \gamma_i(s, k_x)}{2c_{22_i}c_{33_i}}}
\end{align}
where
{
\scriptsize
$$
\gamma_i(s, k_x) =  \sqrt{\left(\left(c_{11_i}c_{22_i}-(c_{12_i}^2 + 2c_{12_i}c_{33_i})\right)k_x^2 + (c_{11_i} + c_{33_i})^2 \rho_i s^2\right)^2 - 4c_{22_i}c_{33_i}\left((c_{11_i} + c_{33_i})^2 \rho_i s^2k_x^2 + c_{11_i}c_{33_i}k_x^4 + \rho_i^2 s^4\right)}.
$$
}
}
The concept of homogeneity of the roots will be important to analyse stability property. Below, we give its formal definition. 
\begin{definition}\label{def_homogeneous}
Let $\bm{f}(\bm{v})$ be a function with the vector argument $\bm{v}$. If $\bm{f}(\alpha \bm{v}) = \alpha^n\bm{f}(\bm{v})$ for all nonzero scalar $\alpha \ne 0$ and some $n \in \mathbb{Z}$, then $\bm{f}(\bm{v})$ is homogeneous of degree $n$.
\end{definition}
\sw{
Note that the roots $\kappa_{ij}^{\pm}(s, k_x)$ are homogeneous of degree one in $(s, k_x)$.
} The following lemma states another important property of the roots.


\begin{lemma}
The real parts of the roots  $\kappa_{ij}^{\pm},\ i,j=1,2$ in \eqref{eq:kappa_roots} do not change sign for all $s$ with $\Re{s} >0$.
\end{lemma}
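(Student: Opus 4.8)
The plan is to treat $\kappa$ as the unknown in the characteristic equation \eqref{eq:characteristics_f}, regarding $F_i(s,k_x,\cdot)$ as a polynomial of degree four in $\kappa$ for each fixed $k_x\in\mathbb{R}$ and each $s$ with $\Re s>0$. The leading coefficient in $\kappa^4$ is $\det(B_i)/\rho_i^2$, a nonzero constant independent of $s$ (for the matrices in \eqref{mp} it equals $c_{33}c_{22}/\rho_i^2>0$), so the degree never drops and no root escapes to infinity; consequently the four roots depend continuously on $s$, as a multiset, throughout the region $\{\Re s>0\}$. The strategy is to show that none of these roots can cross the imaginary axis $\Re\kappa=0$ while $\Re s>0$. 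Since the real part of a continuously varying root can change sign only by passing through zero, and since the right half-plane $\{\Re s>0\}$ is connected, this will pin down the sign of each $\Re\kappa_{ij}^{\pm}$.

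The crucial step is to rule out purely imaginary roots. Suppose, for contradiction, that for some $s$ with $\Re s>0$ there is a root with $\Re\kappa=0$, say $\kappa=\bm{i}k_y$ with $k_y\in\mathbb{R}$. As observed immediately before the lemma, substituting $\kappa=\bm{i}k_y$ into $F_i(s,k_x,\kappa)$ reproduces exactly the Cauchy dispersion relation $F(s,\mathbf{k})=0$ of \eqref{eq:dispersion_relation_f} at $\mathbf{k}=(k_x,k_y)$. By Remark \ref{remark:imaginaryroots}, which follows from the energy conservation of Theorem \ref{Thm_energy}, every $s$ solving the Cauchy dispersion relation has $\Re s=0$. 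This contradicts $\Re s>0$. Hence no root of \eqref{eq:characteristics_f} can have vanishing real part when $\Re s>0$. I expect this to be the main obstacle: it is the one place where the global energy estimate is fed into the local spectral argument, and some care is needed to ensure that the imaginary-$\kappa$ slice of $F_i$ is genuinely identified with the Cauchy relation $F$.

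To conclude, I would count roots by the sign of their real part. Let $N_+(s)$ denote the number of roots of $F_i(s,k_x,\cdot)$, counted with multiplicity, having positive real part. By the previous step no root lies on the imaginary axis for $\Re s>0$, so $N_+$ is an integer-valued function that is continuous, the roots moving continuously and none crossing $\Re\kappa=0$, on the connected set $\{\Re s>0\}$; hence $N_+$ is constant there. From the large-$\Re s$ behaviour recalled before \eqref{eq:kappa_roots}, where the four roots split into two with positive and two with negative real part, this constant equals two. Therefore, throughout $\{\Re s>0\}$ exactly two roots keep $\Re\kappa>0$ and two keep $\Re\kappa<0$; labelling them by this sign as $\kappa_{ij}^{+}$ and $\kappa_{ij}^{-}$ yields precisely the claim that no real part changes sign. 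Branch points where two roots coincide do not affect the argument, since the counting function $N_+$ registers only the sign of the real part and is insensitive to such collisions away from the imaginary axis.
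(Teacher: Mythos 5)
Your proposal is correct and follows essentially the same route as the paper: continuity of the roots in $s$, reduction of a sign change to the existence of a purely imaginary root $\kappa=\bm{i}k_y$, and a contradiction with the energy-conserving Cauchy dispersion relation via Remark \ref{remark:imaginaryroots}. The additional details you supply (the nonvanishing leading coefficient $\det(B_i)/\rho_i^2$ and the constancy of the counting function $N_+$ on the connected set $\{\Re{s}>0\}$) merely make explicit what the paper's shorter argument leaves implicit.
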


\begin{proof}
\sw{We prove by contradiction and assume that the real part of a root $\kappa_{ij}$ changes sign for some $s$ with $\Re{s} >0$. Since the root varies continuously with $s$, then for some $s$ with $\Re{s} >0$ the root is purely imaginary, $\kappa_{ij}  = \bm{i}k_y$. In this case, the dispersion relation \eqref{eq:characteristics_f} is the same as  \eqref{eq:dispersion_relation_f} for the Cauchy problem, which admits only \revv{purely} imaginary $s$. }
\end{proof}

\sw{Thus, for each $i=1,2$ there are exactly two roots with positive real parts, $\kappa_{ij}^{+}$ and  exactly two roots with negative real parts, $\kappa_{ij}^{-}$, for all $j = 1, 2$.} We use the notation \eqref{eq:kappa_roots} to denote the roots with the stated sign convention  for all $s$ with $\Re{s} >0$.
That is, the superscript ($+$) denotes the root with positive real part and the superscript ($-$) denotes the root with negative real part.  
Because of the condition $\|\boldsymbol{\phi}_i\|<\infty$, the general solution of \eqref{pde1_lap} takes the form
\begin{equation}\label{gen_sol_ped1_lap}
\boldsymbol{\phi}_1(y)=\delta_{11} e^{\kappa_{11}^- y}\Phi_{11}+\delta_{12} e^{\kappa_{12}^- y}\Phi_{12}, \quad \boldsymbol{\phi}_2(y) =\delta_{21} e^{\kappa_{21}^+ y}\Phi_{21}+\delta_{22} e^{\kappa_{22}^+ y}\Phi_{22},
\end{equation}
where $\Phi_{ij},\ i,j=1,2$ are the corresponding eigenvectors. As an example, in isotropic linear elastic media, the analytical expressions of the roots and eigenvectors are 
\begin{align*}
\kappa_{i1}^{\pm}=\pm \sqrt{ k_x^2+\f{ s^2}{c_{si}^2}}, \quad \kappa_{i2}^{\pm}=\pm \sqrt{k_x^2 + \f{s^2}{c_{pi}^2}},\quad i=1,2,
\end{align*}
 and 

\begin{align}\label{eq:eigenfunctions}
\Phi_{11}=\begin{bmatrix}
\f{\bm{i}}{k_x}\kappa_{11}^-\\
1
\end{bmatrix},\quad
\Phi_{12}=\begin{bmatrix}
\f{-\bm{i} k_x}{\kappa_{12}^-}\\
1
\end{bmatrix},\quad \Phi_{21}=\begin{bmatrix}
\f{\bm{i}}{ k_x}\kappa_{21}^+\\
1
\end{bmatrix},\quad \Phi_{22}=\begin{bmatrix}
\f{-\bm{i} k_x}{\kappa_{22}^+}\\
1
\end{bmatrix}.
\end{align}
For orthotropic elastic media, the roots can also be expressed in  closed form \eqref{eq:kappa_roots}, but the expressions are much more complicated. We refer the reader to \cite{Duru2014K} for more details. \sw{Because the roots are homogeneous of degree one in $(s, k_x)$, the eigenfunctions $\Phi_{ij}$ are homogeneous of degree zero in $(s, k_x)$.}



The coefficients $\boldsymbol{\delta}=[\delta_{11},\delta_{12},\delta_{21},\delta_{22}]^T$ are determined by inserting \eqref{gen_sol_ped1_lap} into the interface conditions \eqref{int_con_Lap}, yielding the following equation
\begin{equation}\label{Fs_pml}
\mathcal{C}(s,  k_x)\boldsymbol{\delta}=\mb{0},
\end{equation}
where the $4\times 4$ interface matrix $\mathcal{C}$ takes the form
{
\footnotesize
\begin{equation}\label{F_0}
\mathcal{C}(s,  k_x)=\begin{bmatrix}
\Phi_{11} &\Phi_{12} & -\Phi_{21} & -\Phi_{22} \\
(\kappa_{11}^-B_1+\bm{i} k_x C_1^T)\Phi_{11} &(\kappa_{21}^-B_1+\bm{i} k_x C_1^T)\Phi_{12} &  -(\kappa_{12}^+B_2+ \bm{i} k_x C_2^T)\Phi_{21}&  -(\kappa_{22}^+B_2+\bm{i} k_x C_2^T)\Phi_{22}
\end{bmatrix}.
\end{equation}
}
To ensure only trivial solutions for $\Re{s} >0$, the coefficients
$\boldsymbol{\delta}$ must vanish, and thus we require the {\it determinant condition}
\begin{align}\label{eq:determinat_conditon}
 \mathcal{F}(s, k_x):=\det\left(\mathcal{C}(s,  k_x)\right) \ne 0, \quad \forall\Re{s} >0.   
\end{align}
We will now formulate an algebraic definition  of stability equivalent to Lemma \ref{def:well-posedness},  for the  coupled problem,  \eqref{pde1}--\eqref{initial_con} and \eqref{int_con}, with piecewise constant media parameters \eqref{mp}.
\begin{lemma}\label{def:determinant_condition}
The solutions of the elastic wave equation \eqref{pde1} with piecewise constant media parameters \eqref{mp}  and the interface condition  \eqref{int_con} are not stable in any sense if for some $k_x \in \mathbb{R}$ and $s \in \mathbb{C}$ with $\Re{s} >0$, we have 
$$
\mathcal{F}(s, k_x):=\det\left(\mathcal{C}(s,  k_x)\right) =0.
$$
\end{lemma}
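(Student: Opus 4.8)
The plan is to prove the lemma by reducing the existence of a nontrivial bounded mode of the form \eqref{eq:simple_wave} with $\Re{s}>0$ to the vanishing of the determinant $\mathcal{F}(s,k_x)$, and then invoking Lemma \ref{def:well-posedness}. More precisely, I would establish the chain: $\mathcal{F}(s,k_x)=0$ for some $k_x\in\mathbb{R}$ and $s$ with $\Re{s}>0$ if and only if the homogeneous system \eqref{Fs_pml} has a nontrivial solution $\boldsymbol{\delta}\neq\mathbf{0}$, which by construction of \eqref{gen_sol_ped1_lap} yields a nontrivial bounded solution $(\boldsymbol{\phi}_1,\boldsymbol{\phi}_2)$ of \eqref{pde1_lap}--\eqref{int_con_Lap}, equivalently a nontrivial mode \eqref{eq:simple_wave} of \eqref{pde1} with $\Re{s}>0$. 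Lemma \ref{def:well-posedness} then identifies such a mode with the failure of stability, which is exactly the claim.

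First I would insert \eqref{eq:simple_wave} into \eqref{pde1} and \eqref{int_con} to obtain the boundary-value problem \eqref{pde1_lap}--\eqref{int_con_Lap} for $(\boldsymbol{\phi}_1,\boldsymbol{\phi}_2)$. The modal ansatz \eqref{eq:modal_solution} reduces \eqref{pde1_lap} to the eigenvalue problem \eqref{interface_wave_speeds_eig} and the characteristic equation \eqref{eq:characteristics_f}. For fixed $k_x$ and $\Re{s}>0$, the four characteristic roots split into two with positive and two with negative real part, and by the preceding lemma these signs are invariant throughout $\Re{s}>0$. The decay requirement $\|\boldsymbol{\phi}_i\|<\infty$ then selects, in the upper half-plane $\Omega_1=\{y>0\}$, only the roots $\kappa_{11}^-,\kappa_{12}^-$ with negative real part, and in the lower half-plane $\Omega_2=\{y<0\}$ only the roots $\kappa_{21}^+,\kappa_{22}^+$ with positive real part, so that every admissible bounded solution is exactly of the form \eqref{gen_sol_ped1_lap} with four free coefficients $\boldsymbol{\delta}$.

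Next I would substitute \eqref{gen_sol_ped1_lap} into the two interface conditions \eqref{int_con_Lap}, namely continuity of displacement and continuity of traction. Matching the four scalar components produces the $4\times 4$ homogeneous linear system \eqref{Fs_pml}, $\mathcal{C}(s,k_x)\boldsymbol{\delta}=\mathbf{0}$, with $\mathcal{C}$ given explicitly in \eqref{F_0}. By elementary linear algebra this system admits a nontrivial solution $\boldsymbol{\delta}\neq\mathbf{0}$ precisely when $\det\mathcal{C}(s,k_x)=\mathcal{F}(s,k_x)=0$. For the direction stated in the lemma, if $\mathcal{F}(s,k_x)=0$ for some $k_x$ and $\Re{s}>0$, I would pick a corresponding $\boldsymbol{\delta}\neq\mathbf{0}$, form $(\boldsymbol{\phi}_1,\boldsymbol{\phi}_2)$ by \eqref{gen_sol_ped1_lap}, observe it is bounded (each term is a decaying exponential) and nontrivial (since some $\delta_{ij}\neq 0$ and the eigenvectors $\Phi_{ij}$ are nonzero), and conclude via Lemma \ref{def:well-posedness} that the coupled problem is not stable in any sense.

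I expect the main obstacle to be showing that \eqref{gen_sol_ped1_lap} captures the \emph{entire} space of bounded solutions, which is what upgrades the stated implication to the full equivalence with Lemma \ref{def:well-posedness}. This requires the four retained modal solutions to be linearly independent and rules out additional bounded solutions arising from degeneracies of \eqref{eq:characteristics_f} --- in particular repeated roots $\kappa_{ij}$, for which the pure exponential ansatz \eqref{eq:modal_solution} would have to be augmented by terms of the form $y\,e^{\kappa y}$. For $\Re{s}>0$ the preceding lemma already guarantees non-vanishing real parts and the clean $\pm$ pairing, excluding purely imaginary roots; generically the four roots are simple and the eigenvectors $\Phi_{ij}$ are well-defined and independent, so \eqref{gen_sol_ped1_lap} is complete. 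The coincidental multiplicities form a lower-dimensional subset of the $(s,k_x)$ parameters and can be handled by a continuity argument, so they do not affect the equivalence.
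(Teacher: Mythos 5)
Your proposal is correct and follows essentially the same route as the paper, which presents this lemma as the algebraic reformulation of Lemma~\ref{def:well-posedness} obtained from the normal-mode derivation: the bounded ansatz \eqref{gen_sol_ped1_lap} reduces the interface conditions to the homogeneous system \eqref{Fs_pml}, whose nontrivial solvability is equivalent to $\det\mathcal{C}(s,k_x)=0$. Your additional remarks on completeness and repeated roots go slightly beyond what the paper spells out, but the implication actually needed for the lemma only requires constructing one nontrivial bounded mode from a nonzero $\boldsymbol{\delta}$, which you do.
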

The {\it determinant condition} is defined for all $s$ with $\Re{s} >0$. The case when $\Re{s} =0$ would correspond to time-harmonic and important interface wave modes, such as Stoneley waves \cite{Gustafsson2013,Kreiss2012}. 

The energy stability in Theorem \ref{Thm_energy} states that   the  coupled problem,  \eqref{pde1}--\eqref{initial_con} and \eqref{int_con}, with piecewise constant media parameters \eqref{mp} conserves energy. Therefore, similar to Remark \ref{remark:imaginaryroots}, the roots $s$ of $\mathcal{F}(s, k_x)$ must be zero or purely imaginary, i.e.  $s\in \mathbb{C}$ with $\Re{s} =0$. 
%
%
We conclude that all nontrivial and stable interface wave modes, such as Stoneley waves, that solve $\mathcal{F}(s, k_x)=0$, must have purely imaginary roots, $s = \bm{i}\xi$ with $\xi \in \mathbb{R}$. A main objective of the present work is to determine how the purely imaginary roots $s = \bm{i}\xi$ will move in the complex plane  when the PML is introduced.

\revv{The homogeneity  property of the  determinant  $\mathcal{F}(s, k_x) = \det(\mathcal{C}(s,  k_x))$ will be important in the following analysis.}
To determine the homogeneity property of $\mathcal{F}(s, k_x)$, we may evaluate the corresponding determinant of $\mathcal{C}(s,  k_x)$. We have the following result. 
 \begin{theorem}\label{thm_homogeneous}
 In piecewise constant elastic media, the determinant  $\mathcal{F}(s, k_x) = \det\left(\mathcal{C}\left(s, k_x\right)\right)$ given in \eqref{eq:determinat_conditon}  is homogeneous of degree two.
 \end{theorem}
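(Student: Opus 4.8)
The plan is to avoid expanding the $4\times 4$ determinant \eqref{F_0} in closed form and instead deduce its homogeneity purely from the \emph{row-wise} scaling of the entries of $\mathcal{C}(s,k_x)$. The guiding observation is elementary: if every entry in row $r$ of a square matrix is homogeneous of the same degree $d_r$ as a function of $(s,k_x)$, then the Leibniz expansion shows $\det$ is homogeneous of degree $\sum_r d_r$, since each term of the expansion contains exactly one factor from each row. For $\mathcal{C}$ the top two rows consist of the eigenvectors $\pm\Phi_{ij}$ and the bottom two of the traction combinations $(\kappa_{ij}^{\pm}B_i+\bm{i}k_xC_i^T)\Phi_{ij}$, so it suffices to show that the eigenvectors are homogeneous of degree $0$ and the roots $\kappa_{ij}^{\pm}$ of degree $1$; this yields $\det\mathcal{C}$ of degree $0+0+1+1=2$.

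First I would fix the scaling of the roots. Since $\mathcal{P}_i(k_x,\kappa)$ in \eqref{interface_wave_speeds_eig} is jointly homogeneous of degree two in $(k_x,\kappa)$ and $s^2$ is homogeneous of degree two in $s$, the matrix $s^2I+\mathcal{P}_i(k_x,\kappa)$ picks up a factor $\alpha^2$ under $(s,k_x,\kappa)\mapsto(\alpha s,\alpha k_x,\alpha\kappa)$, so its $2\times2$ determinant obeys $F_i(\alpha s,\alpha k_x,\alpha\kappa)=\alpha^4F_i(s,k_x,\kappa)$. Consequently, if $\kappa_{ij}^{\pm}(s,k_x)$ is a root of $F_i(s,k_x,\cdot)$ then $\alpha\kappa_{ij}^{\pm}(s,k_x)$ is a root of $F_i(\alpha s,\alpha k_x,\cdot)$, which I would record as the identity $\kappa_{ij}^{\pm}(\alpha s,\alpha k_x)=\alpha\,\kappa_{ij}^{\pm}(s,k_x)$ for a consistently chosen branch (see the final paragraph).

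Next I would show the eigenvectors have degree $0$. The vector $\Phi_{ij}$ spans the kernel of $s^2I+\mathcal{P}_i(k_x,\kappa_{ij}^{\pm})$; evaluating at $(\alpha s,\alpha k_x)$ and using the root scaling together with the degree-two homogeneity of $\mathcal{P}_i$ gives $(\alpha s)^2I+\mathcal{P}_i(\alpha k_x,\alpha\kappa_{ij}^{\pm})=\alpha^2\bigl(s^2I+\mathcal{P}_i(k_x,\kappa_{ij}^{\pm})\bigr)$, a matrix with the \emph{same} kernel. Under the fixed normalisation used in \eqref{eq:eigenfunctions} (second component equal to one), the spanning vector is then literally unchanged, i.e. $\Phi_{ij}(\alpha s,\alpha k_x)=\Phi_{ij}(s,k_x)$; note that this argument uses only kernel invariance and so covers the orthotropic case without the cumbersome closed forms. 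Combining the two facts, the top two rows of $\mathcal{C}$ are invariant (degree $0$) while each bottom entry scales by $\alpha$ (degree $1$), and the row-counting argument delivers $\mathcal{F}(\alpha s,\alpha k_x)=\alpha^2\mathcal{F}(s,k_x)$.

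The delicate point — and the step I expect to require the most care — is that Definition \ref{def_homogeneous} and the application in Theorem \ref{s0} demand this identity for \emph{complex} scalars $\alpha$ (indeed $\alpha=1/S_x$), whereas the roots $\kappa_{ij}^{\pm}$ are defined through square-root branches and the physical selection by the sign of $\Re{\kappa}$, which is only pinned down for $\Re{s}>0$ with real $k_x$. I would resolve this by treating $\kappa_{ij}^{\pm}$ as analytic functions continued to complex $k_x$ and noting that the discriminant of $F_i$ in $\kappa$ is itself homogeneous in $(s,k_x)$, so along a generic ray $t\mapsto(ts,tk_x)$ it vanishes only at the origin $t=0$; hence along any path from $(s,k_x)$ to $(\alpha s,\alpha k_x)$ avoiding the origin the branch is consistent and the linear root scaling holds without a spurious sign, making the degree-two homogeneity genuine rather than merely an identity of zero sets. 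All remaining steps are routine bookkeeping.
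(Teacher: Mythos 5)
Your argument is correct and is essentially the paper's proof: both rest on degree-counting the rows of $\mathcal{C}(s,k_x)$ through the multilinear expansion of the determinant, the paper merely packaging the count as $\det\left(\mathcal{K}(s)\mathcal{C}\right)=s^2\mathcal{F}(s,k_x)$ with $\mathcal{K}(s)=\mathrm{diag}(s,s,1,1)$ so that every entry of the modified matrix has degree one. You additionally justify, rather than assert by inspection, that the roots $\kappa_{ij}^{\pm}$ are homogeneous of degree one and the normalised eigenvectors of degree zero, and you flag the branch-consistency issue for the complex scaling factor $1/S_x$ needed in Theorem~\ref{s0}, both of which the paper leaves implicit.
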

 \begin{proof}
 \sw{First we recall that the roots $\kappa_{ij}^{\pm}$ in \eqref{eq:kappa_roots} are homogeneous of degree one and the eigenfunctions $\Phi_{ij}$ in \eqref{eq:eigenfunctions} are  homogeneous of degree zero.}
 Consider the modified boundary matrix $\mathcal{C}_1(s,  k_x)$ where we have multiplied the first two rows of $\mathcal{C}(s,  k_x)$  by $s \ne 0$, that is
{
\footnotesize
\begin{equation}\label{F_0_C1}
\mathcal{C}_1(s,  k_x)=\begin{bmatrix}
s\Phi_{11} &s\Phi_{12} & -s\Phi_{21} & -s\Phi_{22} \\
(\kappa_{11}^-B_1+\bm{i} k_x C_1^T)\Phi_{11} &(\kappa_{21}^-B_1+\bm{i} k_x C_1^T)\Phi_{12} &  -(\kappa_{12}^+B_2+ \bm{i} k_x C_2^T)\Phi_{21}&  -(\kappa_{22}^+B_2+\bm{i} k_x C_2^T)\Phi_{22}
\end{bmatrix}.
\end{equation}
}
By inspection, every element of $\mathcal{C}_1(s,  k_x)$ is homogeneous of degree one. Therefore the determinant $\det(\mathcal{C}_1(s,  k_x))$ of the $4\times4$ matrix $\mathcal{C}_1(s,  k_x)$, using cofactor expansion, must be homogeneous of degree four.
Note that
$$
\mathcal{C}_1(s,  k_x)=
\mathcal{K}(s)\mathcal{C}(s,  k_x),
\quad 
\mathcal{K}(s)
=
\begin{bmatrix}
s & 0 &0 & 0\\
0 & s &0 & 0\\
0 & 0 &1 & 0\\
0 & 0 &0 & 1
\end{bmatrix}.
$$
Using the properties of the determinant of products of matrices we have 
$$
\det(\mathcal{C}_1(s,  k_x)) = 
\det\left(
\mathcal{K}(s)
\right)\det(\mathcal{C}(s,  k_x))=
s^2 \det(\mathcal{C}(s,  k_x)) = s^2\mathcal{F}(s,  k_x).  
$$
Since $\det(\mathcal{C}_1(s,  k_x))$ is homogeneous of degree four, therefore the determinant $\mathcal{F}(s,  k_x)$ is homogeneous of degree two.
 \end{proof}

\section{The perfectly matched layer}\label{sec:pml_model}
We consider the elastic wave equation \eqref{pde1} with the interface conditions \eqref{int_con}. 
Let the Laplace transform, in time, of $\mathbf{u}\left(x,y, t\right)$ be defined by
{
\begin{equation}
\widehat{\mathbf{u}}(x,y,s)  = \int_0^{\infty}e^{-st}{\mathbf{u}}\left(x,y,t\right)\text{dt},  \quad s = a + \bm{i}b, \quad \Re{s} = a > 0.
\end{equation}
}
We consider  a setup where the PML is included in  the $x$-direction only.
Without loss of generality, we assume that we are only interested in the solution in the left half-plane $x\leq 0$. To absorb outgoing waves, we introduce a PML outside the left half-plane and require that the material properties are invariant in $x$ in PML. 

To derive the PML model, we Laplace transform \eqref{pde1} in time, and obtain
\begin{equation}\label{lap1}
\rho_i s^2 \mb{\widehat u_i} = \f{\p}{\p x}\left(A_i\f{\p{\mb{\widehat u}_i}}{\p x}\right)+\f{\p}{\p y}\left(B_i\f{\p\mb{\widehat u}_i}{\p y}\right)+\f{\p}{\p x}\left(C_i\f{\p\mb{\widehat u}_i}{\p y}\right)+\f{\p}{\p y}\left(C^T_i\f{\p\mb{\widehat u}_i}{\p x}\right),\quad (x,y) \in \Omega_i, \quad \Re{s} >0.
\end{equation}
Note that we have tacitly assumed homogeneous initial data.
Next, we consider \eqref{lap1} in the transformed coordinate $(\widetilde x, y)$, such that
\begin{equation}\label{eqn_Sx}
\f{d\widetilde x}{dx}=1 + \f{\sigma(x)}{\alpha+s}=:\Sx.
\end{equation}
Here, $\sigma(x)\geq 0$ is the damping function and $\alpha\geq 0$ is the complex frequency shift (CFS) \cite{Kuzuoglu_and_Mittra}.
For all $s \in \mathbb{C}$ with $\Re{s} >0$, we have $\Sx \ne 0$ and $1/\Sx \ne 0$, and the smooth complex coordinate transformation \cite{Chew_and_Weedon},
\begin{equation}\label{complex_scaling}
\f{\p}{\p x} \to \f{1}{\Sx}\f{\p}{\p x}.
\end{equation}
The PML model in Laplace space is
\begin{equation}\label{lap2}
s^2 \rho_i\mb{\widehat u}_i = \f{1}{\Sx}\f{\p}{\p x}\left(\f{1}{\Sx}A_i\f{\p\mb{\widehat u}_i}{\p x}\right)+\f{\p}{\p y}\left(B_i\f{\p\mb{\widehat u}_i}{\p y}\right)+\f{1}{\Sx}\f{\p}{\p x}\left(C_i\f{\p\mb{\widehat u}_i}{\p y}\right)+\f{\p}{\p y}\left(C^T_i\f{1}{\Sx}\f{\p\mb{\widehat u}_i}{\p x}\right),\quad i=1,2,
\end{equation}
with the transformed interface conditions 
\begin{equation}\label{int_lap_pml}
\mb{\widehat u}_1=\mb{\widehat u}_2,\quad B_1\f{\p\mb{\widehat u}_1}{\p y}+C_1^T\f{1}{\Sx}\f{\p\mb{\widehat u}_1}{\p x}=B_2\f{\p\mb{\widehat u}_2}{\p y}+C_2^T\f{1}{\Sx}\f{\p\mb{\widehat u}_2}{\p x}.
\end{equation}
Choosing the auxiliary variables
\begin{align*}
\mb{\widehat v}_i=\f{1}{s+\sigma+\alpha}\f{\p \mb{\widehat u}_i}{\p x}, \quad \mb{\widehat w}_i=\f{1}{s+\alpha}\f{\p \mb{\widehat u}_i}{\p y},\quad
 \mb{\widehat q}_i=\f{\alpha}{s+\alpha} \mb{\widehat u}_i,
\end{align*}
 we invert the Laplace transformed equation \eqref{lap2} and obtain the PML model in physical space,
 {
 \small
\begin{equation}\label{eq:PML_physical_space}
\begin{split}
\rho_i\left(\frac{\p^2\mb{u}_i}{\p t^2}+ \sigma\f{\p \mb{u}_i}{\p t} -\sigma\alpha (\mb{u}_i-\mb{q}_i)\right)&=\f{\p}{\p x}\left(A_i\f{\p\mb{u}_i}{\p x}+C_i\f{\p\mb{u}_i}{\p y} - \sigma A_i\mb{v}_i\right)+\f{\p}{\p y}\left(B_i\f{\p\mb{u}_i}{\p y}+C_i^T\f{\p\mb{u}_i}{\p x} + \sigma B_i\mb{w}_i\right),\\
\f{\p \mb{v}_i}{\p t}&=-(\sigma+\alpha)\mb{v}_i+\f{\p\mb{u}_i}{\p x}, \\
\f{\p \mb{w}_i}{\p t}&=-\alpha\mb{w}_i+\f{\p\mb{u}_i}{\p y}, \\
\f{\p \mb{q}_i}{\p t}&=\alpha(\mb{u}_i-\mb{q}_i).
\end{split}
\end{equation}
}
\noindent
Similarly, inverting the Laplace transformed interface conditions \eqref{int_lap_pml} for the PML model gives
\begin{equation}\label{int_pml}
 \mb{u_1}=\mb{u_2}, \quad B_1\f{\p\mb{u_1}}{\p y}+C_1^T\f{\p\mb{u_1}}{\p x}+\sigma B_1\mb{w_1}=B_2\f{\p\mb{u_2}}{\p y}+C_2^T\f{\p\mb{u_2}}{\p x}+\sigma B_2\mb{w_2}. 
\end{equation}

In the absence of the PML, $\sigma =0$, the above model problem is energy-stable in the sense of Theorem \ref{Thm_energy} for all elastic material parameters. When $\sigma >0$, however,  the coupled PML model \eqref{eq:PML_physical_space}-\eqref{int_pml} is asymmetric with auxiliary differential equations. Thus, a similar energy-stability cannot be established in general. To analyse the stability properties of the PML model in a piecewise constant elastic medium, we use the mode analysis discussed in Section \ref{sec:mode_analysis} to prove that  exponentially growing wave modes are not supported. In \ref{app:1}, we derive a more general model with PML included in both $x$ and $y$-directions.


\section{Stability analysis of the PML model}\label{sec:stability_pml}
The stability analysis of the PML will mirror directly the mode analysis described in Section \ref{sec:mode_analysis}. We will split the analysis into two parts: plane wave analysis for the Cauchy PML problem and normal mode analysis for the interface wave modes.

\subsection{Plane waves analysis}\label{sec:planewave_pml}
We now investigate the stability of body wave modes in the PML in the whole real plane  $(x,y) \in \mathbb{R}^2$ with constant medium parameters. 
We consider constant PML damping $\sigma >0$ and uniformly constant coefficients medium parameters
$$
\rho_i = \rho, \quad A_i = A, \quad B_{i} = B, \quad C_i = C,
$$
for $\Omega_i$, $i = 1, 2$, that is there is no discontinuity of material parameters at the interface at $y =0$, \kd{and we do not consider $y=0$ as an interface}. 

Consider the wave-like solution
\begin{align}\label{eq:plane_wave_pml}
\mathbf{u}\left(x, y, t\right) = \mathbf{u}_0 e^{st+\bm{i}\left( k_x x  + k_y y \right)}, \quad \mathbf{u}_0 \in \mathbb{R}^8, \quad k_x, k_y, x, y \in \mathbb{R}, \quad t\ge 0, 
\end{align}
where $s \in \mathbb{C}$ is to be determined and relates to the stability property of the PML model. 
The PML model \eqref{eq:PML_physical_space}  is not stable if there are nontrivial solutions $\mathbf{u}$ of the form  \eqref{eq:plane_wave_pml}  with $\Re{s} > 0$.
An  $s$ with a positive real part, $\Re{s} >0$, corresponds to a plane wave solution with exponentially growing amplitude. A stable system does not admit such wave modes.

We consider the normalised wave vector $\mathbf{K} = (k_1, k_2)$, with $\sqrt{k_1^2 + k_2^2} = 1$ and  the normalised variables
$$
\revv{\lambda = \frac{s}{|\mathbf{k}|},   \quad \epsilon = \frac{\sigma}{|\mathbf{k}|},  \quad \nu = \frac{\alpha}{|\mathbf{k}|}, \quad \Sx\left(\lambda, \epsilon, \nu\right)= 1 +\frac{\epsilon}{\lambda + \nu},}
$$
\revv{where $|\mathbf{k}| = \sqrt{k_x^2 + k_y^2}. $}
Thus, if there are $\Re{\lambda} > 0$, the PML is unstable.

We insert the plane wave solution \eqref{eq:plane_wave_pml} in the PML and obtain the dispersion relation
{
\begin{align}\label{eq:PML_dispersion_relation}
&F_\epsilon(\lambda, \mathbf{K}):=F\left(\lambda,  \frac{1}{\Sx\left(\lambda, \epsilon, \nu\right)}k_1, k_2\right) = 0,
\end{align}
}
where the function $F(\lambda, \mathbf{K})$ is defined by \eqref{eq:dispersion_orthotropic} and \eqref{eq:dispersion_isotropic}. 
The scaled eigenvalue $\lambda$ is a root of the complicated nonlinear dispersion relation $F_\epsilon(\lambda, \mathbf{K})$  for the PML and defined in \eqref{eq:PML_dispersion_relation}.
\sw{Note that $\epsilon \to 0$ implies a sufficiently small PML damping $\sigma \to 0$ for a fixed frequency $|\mathbf{k}|>0$ or a sufficiently large frequency $|\mathbf{k}| \to \infty$ for a fixed PML damping $\sigma>0$.}
When the PML damping vanishes, $\epsilon =0$  we have $\Sx =1$, and $F_0(\lambda, \mathbf{K}) \equiv F(\lambda, \mathbf{K})$.  As shown in Section \ref{sec:dispersion_relation}, the roots of $F(\lambda, \mathbf{K})$ are purely imaginary and correspond to the body wave modes propagating in a homogeneous elastic medium.
When the PML damping is present $\epsilon >0$, the roots $\lambda$ can be difficult to determine. However, standard perturbation arguments yield the following well-known result \cite{duru_kreiss_2012,BECACHE2003399,APPELO2006642}.
\begin{theorem}[Necessary  condition for stability]\label{theo:high_frequency_stability}
Consider the constant coefficient PML, with $\epsilon >0$, $\nu \ge 0$. 
 Let the elastic medium be described by the phase velocity  $\mathbf{V}_p= ({V}_{px}, {V}_{py})$ and the group velocity $\mathbf{V}_g = ({V}_{gx}, {V}_{gy})$ \sw{defined} in \eqref{KVSV}.
If ${V}_{px}{V}_{gx} < 0$, \sw{for any  wave vector $\mathbf{K} = (k_1, k_2)$ with $\sqrt{k_1^2 + k_2^2} = 1$,} then at sufficiently high frequencies, \sw{$\epsilon \to 0$}, there  are corresponding  unstable wave modes with $\Re{\lambda} > 0$.
\end{theorem}

For the elastic subdomains $\Omega_i$, $i = 1, 2$, we will consider only media parameters where the necessary \emph{geometric stability condition}, ${V}_{px}{V}_{gx} \ge 0$,  is satisfied and there are no growing modes for the Cauchy PML problem.  In particular, it can be shown for isotropic elastic materials that body wave modes inside the PML are asymptotically stable  for all frequencies \cite{Duru2012JSC,Duru2014K}. In many anisotropic elastic materials the geometric stability condition and the complex frequency shift $\alpha > 0$ will ensure the stability of plane wave modes for all frequencies \cite{APPELO2006642}.
\sw{
\begin{remark}
    Note that the roots $\lambda$ can be expressed as $\lambda = \lambda_0 + \lambda_1\epsilon + O(\epsilon^2)$ such that $|\lambda_1|<\infty$ for all $\mathbf{K} = (k_1, k_2)$ with $\sqrt{k_1^2 + k_2^2} = 1$ and $\lambda_0$ are the purely imaginary roots of  $F(\lambda, \mathbf{K})=0$ for the undamped system. So if $|\mathbf{k}| \to \infty \iff \epsilon \to 0$ for a fixed $\sigma >0$ then we have $\lambda \to \lambda_0$. By Definition \ref{def:well-posedness_and_stability} the PML Cauchy problem is well-posed for all $\sigma >0$.  However, when the \emph{geometric stability condition} is violated we have $\lambda_1 >0$,  the PML is unstable and will  support plane wave solutions \eqref{eq:plane_wave_pml} with bounded exponentially growing amplitude in time.
\end{remark}
}
Next, we will characterize the stability of interface wave modes in the PML.
\subsection{Stability analysis of interface wave modes}
As above, we assume  constant PML damping $\sigma \ge 0$ and piecewise  constant elastic media parameters with a planar interface at $y = 0$.  We Laplace transform \eqref{eq:PML_physical_space}--\eqref{int_pml} in time, perform a Fourier transformation in the spatial variable $x$ of \eqref{eq:PML_physical_space}--\eqref{int_pml} and eliminate all PML auxiliary variables. 
We have
\begin{equation}\label{fou1}
\rho_i s^2 \mb{\fou u}_i = -\widetilde k_x^2A_i\mb{\fou u}_i+B_i\f{\mathrm{d}^2\mb{\fou u}_i}{\mathrm{d} y^2}+\bm{i}\widetilde k_x\left(C_i+C^T_i\right)\f{\mathrm{d}\mb{\fou u}_i}{\mathrm{d} y},\quad i=1,2,
\end{equation}
where $\widetilde k_x=k_x/\Sx$. The Laplace-Fourier transformed interface conditions  are 
\begin{equation}\label{int_fou}
\mb{\fou u_1}=\mb{\fou u_2},\quad B_1\f{\mathrm{d}\mb{\fou u_1}}{\mathrm{d} y}+\bm{i}\widetilde k_xC_1^T\mb{\fou u_1}=B_2\f{\mathrm{d} \mb{\fou u_2}}{\mathrm{d} y}+\bm{i}\widetilde k_x C_2^T\mb{\fou u_2},\quad y=0.
\end{equation}
Note the similarity between \eqref{fou1}--\eqref{int_fou} and \eqref{pde1_lap}--\eqref{int_con_Lap}; the only difference is that we have replaced $k_x$ with $\widetilde k_x$ and $\boldsymbol{\phi}_i$ with $\mb{\fou u}_i$.
When the PML damping vanishes $\sigma =0$, we have $\Sx\equiv 1$ and  $\widetilde k_x \equiv k_x$. In this case, the PML model \eqref{fou1}--\eqref{int_fou} is equivalent to the original equation \eqref{pde1_lap}--\eqref{int_con_Lap}, and \eqref{fou1} is the Laplace-Fourier transformations  of equation \eqref{pde1}. 

We seek modal solutions to \eqref{fou1} in the form 
\begin{equation}\label{gel_sol}
\mb{\fou u}_i=\boldsymbol{\Phi}_i e^{\kappa y}, \quad \boldsymbol{\Phi}_i \in \mathbb{C}^2,\quad i=1,2.
\end{equation}
Substituting \eqref{gel_sol} into \eqref{fou1}, we obtain 
\begin{equation}\label{int_sys}
\left(s^2 I + \mathcal{P}_i(\widetilde k_x, \kappa)\right)\boldsymbol{\Phi}_i = 0,\quad i=1,2,
\end{equation}
where 
\begin{align*}
\mathcal{P}_i(\widetilde k_x, \kappa)&=\widetilde k_x^2 A_i - \kappa^2 B_i-\bm{i}\widetilde k_x \kappa (C_i+C_i^T),\quad i=1,2.
\end{align*}
The existence of nontrivial  solutions to \eqref{int_sys} requires that
\begin{align}\label{eq:characteristics_f_pml}
&F_i\left(s,  \widetilde k_x, \kappa\right):= \det\left({s^2} I +  \mathcal{P}_i(\widetilde k_x, \kappa)\right) = 0,\quad i=1,2.
\end{align}
As above, we note that if we set $\kappa = \bm{i}k_y$ in $F_i\left(s,  k_x, \kappa\right)$, we get exactly the same PML dispersion relation \eqref{eq:PML_dispersion_relation} for the Cauchy problem.
Again, note also the close similarity between \eqref{eq:characteristics_f} and \eqref{eq:characteristics_f_pml}.
The roots, $\kappa = \widetilde{\kappa}_{ij}^{\pm}$, of the characteristic function $F_i\left(s,  \widetilde{k}_x, \kappa\right)$ are 
\begin{align}\label{eq;kappa_tilde}
\widetilde{\kappa}_{ij}^{-}(s, k_x)=\kappa_{ij}^{-}(s, \widetilde{k}_x), \quad \widetilde{\kappa}_{ij}^{+}(s, {k}_x) = \kappa_{ij}^{+}(s, \widetilde{k}_x), \quad j = 1, 2.
\end{align}
\revv{Note that $\kappa_{ij}^{\pm}(s,k_x)$ are the roots of the characteristic function for the undamped system,  when the PML damping vanishes, that is $\sigma =0$ with $S_x =1$ and $\widetilde{\kappa}_{ij}^{\pm}(s,k_x)$ are the roots when the PML is present, that is  $\sigma \ne 0$ with  $S_x \ne 1$. However, the roots $\kappa_{ij}^{\pm}(s,k_x)$ and  $\widetilde{\kappa}_{ij}^{\pm}(s,k_x)$ are related via the identity \eqref{eq;kappa_tilde}.}

For the proceeding analysis to  directly mirror the mode analysis  discussed in section \ref{sec:normal_modes_analysis_interface},  we will need the sign consistency between $\Re{\kappa_{ij}^{\pm}}$ and
$\Re{\widetilde{\kappa}_{ij}^{\pm}}$. \revv{To begin, we will prove that the sign consistency holds, that is for $\Re{s}=a > 0$, $\sigma \ge 0$ and $\alpha\ge 0$ we  have
\begin{align}
   \sign{\left(\Re{\kappa_{ij}^{\pm}}\right)}= \sign{\left(\Re{\widetilde{\kappa}_{ij}^{\pm}}\right)}. 
\end{align}
}
\revv{We will follow the standard procedure, by first determining $\sign{\left(\Re{\widetilde{\kappa}_{ij}^{\pm}}\right)}$ at a point in the positive complex plane, $\Re{s}>0$, then use continuity arguments to extend the result to the entire complex plane.}
\revv{ 
We begin with the Lemma which characterizes the roots for positive real $s>0$, that is $0<s \in \mathbb{R}$.
\begin{lemma}\label{lem:sign consistency for large positve Re(s)}
     For positive real $s>0$, that is $0<s \in \mathbb{R}$,  the roots $\widetilde{\kappa}_{ij}$ come in pairs and have non-vanishing real parts with
     $
        \sign{\left(\Re{\kappa_{ij}^{\pm}}\right)}= \sign{\left(\Re{\widetilde{\kappa}_{ij}^{\pm}}\right)}.
     $
\end{lemma}
\begin{proof}
    For real $s>0$, the PML complex coordinate transformation metric is real and positive $S_x >0$ for all $\sigma \ge 0$, $\alpha \ge 0$, and  $\widetilde k_x=k_x/\Sx \in \mathbb{R}$ is a real scaling. Since $\widetilde{\kappa}_{ij}^{\pm}(s, k_x)=\kappa_{ij}^{\pm}(s, \widetilde{k}_x)$ we must have    $
        \sign{\left(\Re{\kappa_{ij}^{\pm}}\right)}= \sign{\left(\Re{\widetilde{\kappa}_{ij}^{\pm}}\right)}.
     $
\end{proof}
}
The  following lemma, which uses a standard continuity arguments,  was first proven in \cite{Duru2014K}.
\begin{lemma}\label{lem:sign consistency}
If the PML Cauchy problem has no temporally growing modes, then for all $k_x \in \mathbb{R}$ and all $s  \in \mathbb{C}$ with $\Re{s} > 0$
the PML characteristic equation has roots $\widetilde{\kappa}_{ij}^{\pm}(s, k_x)$ with 
$$
\sign{\left(\Re{\kappa_{ij}^{\pm}}\right)}= \sign{\left(\Re{\widetilde{\kappa}_{ij}^{\pm}}\right)}.
$$
\end{lemma}
\begin{proof}
As above, we note that the roots vary continuously with $s$. Thus, if the real part of a root $\widetilde{\kappa}_{ij}$ changes sign, then for some $s$ with $\Re{s} >0$ the root must be purely imaginary, $\widetilde{\kappa}_{ij}  = \bm{i}k_y$. When $\kappa_{ij}  = \bm{i}k_y$ the PML dispersion relations \eqref{eq:PML_dispersion_relation} for the Cauchy problem and the characteristic   \eqref{eq:characteristics_f_pml} are equivalent. Therefore a purely imaginary root $\kappa_{ij}  = \bm{i}k_y$ with $\Re{s} >0$ corresponds to an exponentially growing mode for the Cauchy PML problem, which contradicts the assumption that the Cauchy PML problem has no growing wave modes. 
\end{proof}
\begin{remark}\label{rem:well-posed-pml}
\revv{
We remark that Lemma \ref{lem:sign consistency for large positve Re(s)} and Lemma \ref{lem:sign consistency} together with the {\it determinant condition} \eqref{eq:determinat_conditon} and the homogeneity property Theorem \ref{thm_homogeneous} indicate that the PML transmission problem \eqref{eq:PML_physical_space}--\eqref{int_pml} is well-posed, by Definition \ref{def:well-posedness_and_stability}. This claim will be made more precise by the stability analysis below.
}
\end{remark}
\noindent
We are, however, particularly interested in the stability of the PML transmission problem \eqref{eq:PML_physical_space}--\eqref{int_pml} which would exclude the possibility of exponentially growing interface wave modes.

{As before, recall that for each $i=1,2$ there are exactly two roots with positive real parts, $\kappa_{ij}^{+}$ and  exactly two roots with negative real parts, $\kappa_{ij}^{-}$, for all $j = 1, 2$. The sign consistency of the roots, Lemma \ref{lem:sign consistency for large positve Re(s)} and  Lemma \ref{lem:sign consistency}, $
\sign{\left(\Re{\kappa_{ij}^{\pm}}\right)}= \sign{\left(\Re{\widetilde{\kappa}_{ij}^{\pm}}\right)}
$, ensures that the perturbed roots have equal number of positive and negative roots, that is 
$\sign{\left(\Re{\widetilde{\kappa}_{ij}^{+}}\right)}>0$ and 
$\sign{\left(\Re{\widetilde{\kappa}_{ij}^{-}}\right)} < 0$ for all $\Re{s}>0$, $i,j \in {1, 2}$.}
Taking into account the boundedness condition, the general solution of \eqref{fou1} is
\begin{equation}\label{gen_sol_ped1_lap_pml}
\mb{\fou u_{1}}(y)=\delta_{11} e^{\widetilde{\kappa}_{11}^- y}\Phi_{11}+\delta_{12} e^{\widetilde{\kappa}_{12}^- y}\Phi_{12}, \quad \mb{\fou u_{2}}(y) =\delta_{21} e^{\widetilde{\kappa}_{21}^+ y}\Phi_{21}+\delta_{22} e^{\widetilde{\kappa}_{22}^+ y}\Phi_{22}.
\end{equation}
The coefficients $\boldsymbol{\delta}=[\delta_{11},\delta_{12},\delta_{21},\delta_{22}]^T$ are determined by inserting \eqref{gen_sol_ped1_lap_pml} into the interface conditions \eqref{int_con_Lap}. We have the following equation
\begin{equation}\label{Fs_pml_9}
\mathcal{C}(s,  \widetilde{k}_x)\boldsymbol{\delta}=\mb{0},
\end{equation}
where 
{
\footnotesize
\begin{equation}\label{F}
\mathcal{C}(s,  \widetilde{k}_x)=\begin{bmatrix}
\Phi_{11} &\Phi_{12} & -\Phi_{21} & -\Phi_{22} \\
(\widetilde{\kappa}_{11}^-B_1+\bm{i}\widetilde k_x C_1^T)\Phi_{11} &(\widetilde{\kappa}_{21}^-B_1+ \bm{i} \widetilde k_x C_1^T)\Phi_{12} &  -(\widetilde{\kappa}_{12}^+B_2+ \bm{i}\widetilde k_x C_2^T)\Phi_{21}&  -(\widetilde{\kappa}_{22}^+B_2+ \bm{i} \widetilde k_x C_2^T)\Phi_{22}
\end{bmatrix}.
\end{equation}
}
Using the determinant condition given in Definition \ref{def:determinant_condition}, we formulate a stability condition for the PML in a piecewise constant elastic medium.
\begin{lemma}[Stability condition]\label{lem:determinant_condition}
The solution to the PML model \eqref{fou1} with piecewise constant material parameters \eqref{mp} and interface condition \eqref{int_fou} is not stable in any sense if for some $k_x \in \mathbb{R}$ and $s \in \mathbb{C}$ with $\Re{s} >0$, the determinant vanishes, 
$$
\mathcal{F}(s, \widetilde{k}_x):=\det\left(\mathcal{C}(s,  \widetilde{k}_x)\right) =0.
$$
\end{lemma}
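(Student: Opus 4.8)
The plan is to mirror the determinant-condition argument behind Lemma~\ref{def:determinant_condition}, transported to the PML variables $\widetilde{k}_x$ and $\widetilde{\kappa}_{ij}^{\pm}$. First I would assume that $\mathcal{F}(s,\widetilde{k}_x)=\det(\mathcal{C}(s,\widetilde{k}_x))=0$ for some $k_x\in\mathbb{R}$ and $s\in\mathbb{C}$ with $\Re{s}>0$, so that the homogeneous $4\times4$ system \eqref{Fs_pml_9} has a nontrivial kernel vector $\boldsymbol{\delta}=[\delta_{11},\delta_{12},\delta_{21},\delta_{22}]^T\ne\mb{0}$. Inserting this $\boldsymbol{\delta}$ into \eqref{gen_sol_ped1_lap_pml} produces a field $(\mb{\fou u}_1,\mb{\fou u}_2)\not\equiv\mb{0}$. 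By the eigenvalue problem \eqref{int_sys} each summand $e^{\widetilde{\kappa}_{ij}^{\pm}y}\Phi_{ij}$ solves \eqref{fou1}, so $\mb{\fou u}_i$ solves \eqref{fou1} in each half-plane, and because the rows of $\mathcal{C}(s,\widetilde{k}_x)$ encode exactly continuity of displacement and of traction, the relation $\mathcal{C}(s,\widetilde{k}_x)\boldsymbol{\delta}=\mb{0}$ guarantees that the interface conditions \eqref{int_fou} hold at $y=0$.

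The step I expect to be the crux is admissibility: showing that this constructed mode is genuinely bounded, $\|\mb{\fou u}_i\|<\infty$. Here I would invoke the hypothesis that the Cauchy PML problem has no temporally growing modes, which holds under the geometric stability condition of Theorem~\ref{theo:high_frequency_stability}, and feed it through the sign-consistency lemma proved above. That lemma yields $\sign(\Re{\widetilde{\kappa}_{ij}^{\pm}})=\sign(\Re{\kappa_{ij}^{\pm}})$ for all $\Re{s}>0$; consequently $\Re{\widetilde{\kappa}_{1j}^{-}}<0$ makes $e^{\widetilde{\kappa}_{1j}^{-}y}$ decay as $y\to+\infty$ in $\Omega_1$, while $\Re{\widetilde{\kappa}_{2j}^{+}}>0$ makes $e^{\widetilde{\kappa}_{2j}^{+}y}$ decay as $y\to-\infty$ in $\Omega_2$. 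Thus \eqref{gen_sol_ped1_lap_pml} is bounded and decays in both half-planes, so the mode is admissible. Without this sign control the chosen exponentials could grow in $y$ and the candidate would fail to be a legitimate bounded solution, which is why this is the delicate point.

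Finally I would undo the Fourier transform in $x$ and the Laplace transform in $t$, so that the admissible kernel field reassembles into a nontrivial solution of the physical PML model \eqref{eq:PML_physical_space}--\eqref{int_pml} of the separated form $\mb{u}_i(x,y,t)=\boldsymbol{\phi}_i(y)\,e^{st+\bm{i}k_x x}$ with $\Re{s}>0$, an interface mode growing exponentially in time. This is exactly the situation ruled out for a stable system in Lemma~\ref{def:well-posedness}. Since the transformed equations \eqref{fou1}--\eqref{int_fou} differ from \eqref{pde1_lap}--\eqref{int_con_Lap} only through the substitutions $k_x\mapsto\widetilde{k}_x$ and $\kappa_{ij}^{\pm}\mapsto\widetilde{\kappa}_{ij}^{\pm}$, the construction of solutions that grow arbitrarily fast given in \cite{Duru2014K} transfers verbatim, and I conclude that the PML is not stable in any sense. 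Every implication except the admissibility argument of the second paragraph is the formal mirror of the determinant condition in Lemma~\ref{def:determinant_condition}, so the proof reduces to that single verification.
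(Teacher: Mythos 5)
Your proposal is correct and follows essentially the same route the paper takes (the paper leaves this lemma unproved, as the formal mirror of Lemma~\ref{def:determinant_condition} via the construction of arbitrarily fast growing solutions from \cite{Duru2014K}); your explicit admissibility step, using the sign-consistency of $\Re{\widetilde{\kappa}_{ij}^{\pm}}$ to justify boundedness of the modal solution, is exactly the role the preceding lemma plays in the paper. The only minor imprecision is your aside that the no-growing-modes hypothesis for the Cauchy PML problem ``holds under the geometric stability condition'': that condition is only necessary, not sufficient, as the paper itself notes.
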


The roots of  $\mathcal{F}(s, \widetilde{k}_x)$ are tightly connected to the roots of $\mathcal{F}(s, {k}_x)$ by the homogeneous property of  $\mathcal{F}$. As a consequence, it is enough to analyse the roots of $\mathcal{F}(s, \widetilde{k}_x)$ for the stability property of the PML model. 


\begin{theorem}\label{s0}
Let $\Fh(s, k_x)$ be a homogeneous function of degree $n\in \mathbb{Z}$. Assume that  $\Fh(s, k_x) \ne 0$  for all $\Re{s} >0$ and $k_x \in \mathbb{R}$. Let $\widetilde{k}_x = k_x/\Sx$, where $\Sx$ is the PML metric \eqref{eqn_Sx} with $\sigma\ge0$ and $\alpha\ge0$.  Then the function $\Fh(s, \widetilde{k}_x)$ has no root $s$ with positive real part, $\Re{s} > 0$.
\end{theorem}
\begin{proof}
Consider the homogeneous function $\Fh(s, k_x)$, we have
\begin{align*}
\Fh(s, \widetilde k_x)=\Fh\left(s, \frac{k_x}{\Sx}\right) = \Fh\left(\frac{s\Sx}{\Sx}, \frac{k_x}{\Sx}\right)=\left(\f{1}{\Sx}\right)^n\Fh\left(s\Sx, k_x\right).
\end{align*}
Since ${\Sx} \ne 0$ and ${1}/{\Sx} \ne 0$, we must have 
\begin{align*}
\Fh(s, \widetilde k_x)=0 \iff \Fh\left(\widetilde{s}, k_x\right) =0, \quad \widetilde{s} = s\Sx.
\end{align*}
Assume that $s = a + \bm{i}b$ with $a > 0$, we have
\begin{align*}
&\Re{\widetilde{s}} =  \left(a + \left( \frac{a\left(a+\alpha\right) + b^2}{{|s + \alpha|^2}}\right)\sigma\right) \ge a>0.
\end{align*}
Thus if $\Fh\left(\widetilde{s}, k_x\right) =0$ then 
$\widetilde{s}$ with $\Re{\widetilde{s}} > 0$ is a root. This will contradict the assumption that  $\Fh\left({s}, k_x\right) \ne 0 $ for all $\Re{s} >0$. We conclude that for $s = a + \bm{i}b$ with $a > 0$, we must have $\Fh\left(\widetilde{s}, k_x\right) \ne 0$ for all $\sigma \ge 0$ and $\alpha \ge 0$.
\end{proof}

 \revv{By Theorem \ref{thm_homogeneous} we know that the  determinant  $\mathcal{F}(s, k_x) = \det(\mathcal{C}(s,  k_x))$ is a homogeneous function of degree two. Thus, we can now state the result that shows that exponentially growing waves modes are not supported by the PML  in a discontinuous elastic medium.}
  \begin{theorem}
Consider the PML \eqref{fou1} in a discontinuous elastic medium  with the interface condition \eqref{int_fou} at $y = 0$. Let $\mathcal{F}(s, k_x)$ be the homogeneous function given in \eqref{eq:determinat_conditon}. If $\mathcal{F}(s, k_x) \ne 0$  for all $\Re{s} >0$ and $k_x \in \mathbb{R}$ and the PML Cauchy problem has no temporally growing modes,    then there are no growing interface wave modes in the PML. That is $\mathcal{F}(s, \widetilde{k}_x) \ne 0$ for all $\Re{s} >0$ and $k_x \in \mathbb{R}$.
\end{theorem}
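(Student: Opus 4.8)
The plan is to obtain the statement as a direct corollary of Theorems~\ref{thm_homogeneous} and~\ref{s0}, once the PML determinant $\mathcal{F}(s,\widetilde{k}_x)$ is identified with the function $\mathcal{F}(s,k_x)$ evaluated at the complex-scaled wavenumber $\widetilde{k}_x = k_x/S_x$. The crucial structural observation is that the PML interface matrix $\mathcal{C}(s,\widetilde{k}_x)$ in \eqref{F} has exactly the same form as $\mathcal{C}(s,k_x)$ in \eqref{F_0}, with $k_x$ replaced everywhere by $\widetilde{k}_x$. This rests on the identity $\widetilde{\kappa}_{ij}^{\pm}(s,k_x)=\kappa_{ij}^{\pm}(s,\widetilde{k}_x)$ noted after \eqref{eq:characteristics_f_pml}, together with the fact that the eigenvectors $\Phi_{ij}$ enter the modal solution \eqref{gen_sol_ped1_lap_pml} through the same functional expressions as in the PML-free case \eqref{gen_sol_ped1_lap}.

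The one subtlety I would address carefully first is the root selection forced by boundedness. In \eqref{gen_sol_ped1_lap_pml}, boundedness selects the two roots with negative real part in $\Omega_1$, labelled $\widetilde{\kappa}_{1j}^{-}$, and the two roots with positive real part in $\Omega_2$, labelled $\widetilde{\kappa}_{2j}^{+}$. For these labels to coincide with the selection made in \eqref{gen_sol_ped1_lap}, I would invoke the sign-consistency lemma (the continuity argument proven in \cite{Duru2014K}): under the hypothesis that the PML Cauchy problem supports no temporally growing modes, one has $\sign(\Re{\widetilde{\kappa}_{ij}^{\pm}})=\sign(\Re{\kappa_{ij}^{\pm}})$ for all $s$ with $\Re{s}>0$. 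This guarantees that the superscript conventions are preserved under the complex scaling, so that $\mathcal{C}(s,\widetilde{k}_x)$ is genuinely the matrix $\mathcal{C}$ with argument $\widetilde{k}_x$, and hence $\mathcal{F}(s,\widetilde{k}_x)=\det\mathcal{C}(s,\widetilde{k}_x)$ is the homogeneous function $\mathcal{F}$ composed with $\widetilde{k}_x=k_x/S_x$.

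With this identification in place, the remainder is immediate. By Theorem~\ref{thm_homogeneous}, $\mathcal{F}(s,k_x)$ is homogeneous of degree two in $(s,k_x)$, so the hypotheses of Theorem~\ref{s0} hold with $n=2$. Since by assumption $\mathcal{F}(s,k_x)\ne 0$ for all $\Re{s}>0$ and $k_x\in\mathbb{R}$ (the determinant condition of Lemma~\ref{def:determinant_condition}, i.e.\ stability of the interface problem without the PML), Theorem~\ref{s0} yields that $\mathcal{F}(s,\widetilde{k}_x)$ has no root with $\Re{s}>0$. Concretely, writing $\widetilde{s}=sS_x$, homogeneity gives $\mathcal{F}(s,\widetilde{k}_x)=0 \iff \mathcal{F}(\widetilde{s},k_x)=0$, and the estimate $\Re{\widetilde{s}}\ge\Re{s}>0$ shows any such root would contradict the assumed determinant condition; hence $\mathcal{F}(s,\widetilde{k}_x)\ne 0$ for all $\Re{s}>0$ and $k_x\in\mathbb{R}$, and by Lemma~\ref{lem:determinant_condition} the PML supports no growing interface wave modes. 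I expect the genuine work to lie entirely in the sign-consistency step of the second paragraph rather than in any algebra: the downstream conclusion is a clean application of the homogeneity argument, but the identification of $\mathcal{C}(s,\widetilde{k}_x)$ with the scaled matrix is valid only because the boundedness-induced root selection is preserved under the complex scaling, which is precisely where the no-growing-modes hypothesis on the Cauchy PML problem is consumed.
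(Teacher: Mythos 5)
Your proposal is correct and follows the same route as the paper, which simply invokes Theorem~\ref{s0} with the degree of homogeneity $n=2$ supplied by Theorem~\ref{thm_homogeneous}. Your additional care in using the sign-consistency lemma to justify identifying $\mathcal{C}(s,\widetilde{k}_x)$ with the scaled matrix is exactly where the paper (implicitly, via the preceding lemma) consumes the hypothesis that the PML Cauchy problem has no growing modes, so you have correctly located the only nontrivial step.
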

\begin{proof}
The proof is identical to the proof of Theorem \ref{s0} with degree of homogeneity $n = 2$.
\end{proof}
 %
 %
The following theorem states that interface wave modes are dissipated by the PML, \sw{i.e., $\mathcal{F}(s, \widetilde{k}_x) =0$ implies $\Re{s} \le 0$ for all $k_x \in \mathbb{R}$, $\alpha \ge 0$ and $\sigma \ge 0$.}
 \begin{theorem}\label{theorem:dissipation_of_interface_waves}
Consider the PML  model problem \eqref{fou1} in a discontinuous elastic medium  with the interface condition \eqref{int_fou} at $y = 0$. If the PML Cauchy problem has no temporally growing modes then all stable interface wave modes, that solve $\mathcal{F}(s, k_x) =0$ for all $k_x \in \mathbb{R}$ with $s = \bm{i}\xi$, are dissipated by the PML. 
\end{theorem}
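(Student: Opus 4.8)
The plan is to follow the continuation strategy of the preceding theorems: start from a stable interface mode, for which $\mathcal{F}(\bm{i}\xi, k_x) = 0$ with $\xi \in \mathbb{R}$, switch on the PML damping $\sigma > 0$, and show that the corresponding root $s$ of the PML determinant condition $\mathcal{F}(s, \widetilde{k}_x) = 0$ migrates strictly into the left half-plane. First I would invoke Theorem \ref{thm_homogeneous}, which states that $\mathcal{F}$ is homogeneous of degree two, so the manipulation used in the proof of Theorem \ref{s0} yields the equivalence
\begin{align*}
\mathcal{F}(s, \widetilde{k}_x) = 0 \iff \mathcal{F}(s S_x, k_x) = 0, \quad \widetilde{k}_x = k_x / S_x.
\end{align*}
The decisive observation is that $\mathcal{F}(\cdot, k_x)$ is assembled only from the roots $\kappa_{ij}^{\pm}(s, k_x)$ and the eigenvectors $\Phi_{ij}$, and therefore does not depend on the PML parameters $\sigma$ and $\alpha$; for a fixed real $k_x$ its zeros form a fixed discrete set, and under the stability hypothesis these zeros lie on the imaginary axis, precisely at the interface-mode frequencies $s = \bm{i}\xi$.

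Next I would run a continuity argument in $\sigma$. At $\sigma = 0$ we have $S_x = 1$, so the PML root coincides with $\bm{i}\xi$. As $\sigma$ increases, $s$ varies continuously, hence so does $\widetilde{s} := s S_x$; but $\widetilde{s}$ must remain a zero of the $\sigma$-independent function $\mathcal{F}(\cdot, k_x)$, so it is confined to the fixed discrete set $\{\bm{i}\xi\}$. A continuous map into a discrete set is constant, whence $s S_x = \bm{i}\xi$ holds exactly, with the same frequency $\xi$ we started from. The dissipation claim thus reduces to the single scalar equation $s S_x = \bm{i}\xi$, equivalently $s\left(1 + \frac{\sigma}{s + \alpha}\right) = \bm{i}\xi$, and it remains to show that its solution obeys $\Re{s} < 0$.

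Writing $s = a + \bm{i}b$ and taking real parts, the condition $\Re{(sS_x)} = 0$ becomes, exactly as computed in the proof of Theorem \ref{s0},
\begin{align*}
a + \sigma\,\frac{a(a+\alpha) + b^2}{(a+\alpha)^2 + b^2} = 0.
\end{align*}
If $a > 0$ the first summand is strictly positive and the second is nonnegative, so the left-hand side is strictly positive, which is impossible. If $a = 0$ the identity forces $\sigma b^2/(\alpha^2 + b^2) = 0$, hence $b = 0$ and $s = 0$, the degenerate static case $\xi = 0$. Therefore every genuine time-harmonic interface mode, $\xi \neq 0$, must have $a = \Re{s} < 0$ once $\sigma > 0$, which is precisely the statement that the mode is dissipated by the PML.

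The step I expect to be the main obstacle is not the final real-part estimate, which is routine, but the justification that the perturbed root genuinely continues to $s S_x = \bm{i}\xi$, rather than being lost or merging with another branch as $\sigma$ grows. The clean resolution is the freezing of the zero set of $\mathcal{F}(\cdot, k_x)$ noted above, which turns the matter into a continuous map into a discrete set. This is exactly where the hypothesis that the Cauchy PML problem has no temporally growing modes enters: it secures the sign consistency $\sign{(\Re{\kappa_{ij}^{\pm}})} = \sign{(\Re{\widetilde{\kappa}_{ij}^{\pm}})}$, so that the general solution \eqref{gen_sol_ped1_lap_pml} and the determinant $\mathcal{F}(s, \widetilde{k}_x)$ are built from the correct decaying branches for all $\Re{s} > 0$, validating the construction on which the whole argument rests.
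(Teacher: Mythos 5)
Your proposal is correct and follows the same overall strategy as the paper: use the degree-two homogeneity of $\mathcal{F}$ (Theorem \ref{thm_homogeneous}) to convert $\mathcal{F}(s,\widetilde{k}_x)=0$ into $\mathcal{F}(sS_x,k_x)=0$, invoke the fact that all roots of $\mathcal{F}(\cdot,k_x)$ are purely imaginary to reduce everything to the scalar equation $sS_x=\bm{i}\xi$, and then show this forces $\Re{s}\le 0$. The one place you genuinely diverge is the endgame. The paper clears denominators to get the quadratic $s^2+(\alpha+\sigma-\bm{i}\xi)s-\bm{i}\alpha\xi=0$, writes out both roots explicitly, and rules out $\Re{s}>0$ by a chain of squarings ending in the contradiction $(\alpha+\sigma)^2<(\alpha-\sigma)^2$; it also treats $\alpha=0$ and $\alpha>0$ as separate cases. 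You instead take the real part of $sS_x=\bm{i}\xi$ directly, obtaining $a+\sigma\bigl(a(a+\alpha)+b^2\bigr)/\bigl((a+\alpha)^2+b^2\bigr)=0$, from which $a>0$ is immediately impossible and $a=0$ forces the degenerate static root $s=0$, $\xi=0$. This is shorter, handles $\alpha=0$ and $\alpha>0$ uniformly, and avoids the nested radicals entirely; what it gives up is the explicit location of the shifted roots, which the paper's computation provides as a by-product. Your continuity-in-$\sigma$ argument for pinning $\widetilde{s}$ to the original frequency $\bm{i}\xi$ is sound but more than is needed: since every root of the $\sigma$-independent function $\mathcal{F}(\cdot,k_x)$ is purely imaginary, the equation $sS_x=\bm{i}\xi$ for \emph{some} real $\xi$ follows at once, which is all the real-part estimate requires.
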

\begin{proof}
We will split the proof into two cases, for $\alpha = 0$ and $\alpha >0$.

Consider
\begin{align*}
\mathcal{F}(s, \widetilde k_x)=0 \iff \mathcal{F}\left(\widetilde{s}, k_x\right) =0, \quad \widetilde{s} = s\Sx=\f{\alpha+s+\sigma}{\alpha+s}s, \quad \alpha, \sigma  \ge 0.
\end{align*}
Since $\mathcal{F}\left({s}_0, k_x\right)$ has purely imaginary roots ${s}_0=\bm{i}\xi$, we must have
\begin{equation}\label{eqni}
\f{\alpha+s+\sigma}{\alpha+s}s=\bm{i}\xi,
\end{equation}
for some $\xi\in\mathbb{R}$. 
Thus, if $\alpha=0$, then $s=-\sigma+\bm{i}\xi$ and $\Re{s}=-\sigma <0$, for $\sigma>0$.

When $\alpha > 0$, we consider
\begin{equation*}
\f{\alpha+s+\sigma}{\alpha+s}s=\bm{i}\xi \iff s^2 + (\alpha + \sigma -\bm{i}\xi)s - \bm{i}\alpha\xi = 0.
\end{equation*}
If $\xi =0$, then the roots are $s =0$ and $s = -(\alpha + \sigma) < 0$, for $\alpha > 0$, $\sigma >0$. Clearly the real parts of the roots are non-positive.
If $\xi \ne 0$, then the roots are given by
$$
s = -\frac{(\alpha + \sigma -\bm{i}\xi)}{2} \pm \frac{1}{2}\sqrt{(\alpha + \sigma -\bm{i}\xi)^2 + \bm{i}4\alpha\xi}.
$$
The real parts of the two roots are
$$
\Re{s} = -\frac{(\alpha + \sigma)}{2}\pm \frac{1}{2\sqrt{2}}\sqrt{(\alpha + \sigma)^2 - \xi^2 + \sqrt{((\alpha + \sigma)^2 - \xi^2)^2 + 4 \xi^2(\alpha - \sigma)^2}}.
$$
We note that the root with a negative sign has a negative real part,
$$
\Re{s} = -\frac{(\alpha + \sigma)}{2}- \frac{1}{2\sqrt{2}}\sqrt{(\alpha + \sigma)^2 - \xi^2 + \sqrt{((\alpha + \sigma)^2 - \xi^2)^2 + 4 \xi^2(\alpha - \sigma)^2}} < 0.
$$
For the other root with a positive sign, we have
$$
\Re{s} = -\frac{(\alpha + \sigma)}{2}+ \frac{1}{2\sqrt{2}}\sqrt{(\alpha + \sigma)^2 - \xi^2 + \sqrt{((\alpha + \sigma)^2 - \xi^2)^2 + 4 \xi^2(\alpha - \sigma)^2}}.
$$
If we assume that $\Re{s} > 0$ for $\alpha > 0$, $\sigma >0$ and $\xi \in \mathbb{R}$, then this implies that
$$
(\alpha + \sigma) < \frac{1}{\sqrt{2}}\sqrt{(\alpha + \sigma)^2 - \xi^2 + \sqrt{((\alpha + \sigma)^2 - \xi^2)^2 + 4 \xi^2(\alpha - \sigma)^2}}.
$$
Squaring both sides of the inequality gives
$$
(\alpha + \sigma)^2 + \xi^2 < \sqrt{((\alpha + \sigma)^2 - \xi^2)^2 + 4 \xi^2(\alpha - \sigma)^2}.
$$
Squaring both sides again and simplifying further yields
$$
(\alpha+\sigma)^2 < (\alpha-\sigma)^2.
$$
This is a contradiction  since $\alpha > 0$ and $\sigma > 0$.
Thus, for $\alpha > 0$ and $\sigma > 0$, we must have $\Re{s} < 0$.
The roots are moved further by the PML into the stable complex plane.
\end{proof}

\section{Numerical Experiments}\label{sec:num_exp}

In this section, we present extensive numerical examples to verify the stability analysis performed in the previous sections and demonstrate the absorption properties of the PML model for the elastic wave equation. We will consider a sequence of numerical experiments with increasing model complexities. We will end the section by simulating elastic wave scattering using the \kd{Marmousi} model \cite{marmousi2} defined by heterogeneous and discontinuous elastic medium in a 2D domain $\Omega \subset \mathbb{R}^2$.

For the spatial discretisation, we use the SBP finite difference operators with fourth-order accurate interior stencil \cite{Mattsson2004}. The boundary conditions and material interface conditions are imposed weakly by the penalty technique \cite{Duru2014V, Duru2014,Carpenter1994} such that a discrete energy estimate is obtained when the damping vanishes. For details on the SBP discretisation and stability for the undamped problem, we refer the reader to \cite{Duru2014, Duru2014V}. We discretise in time using the classical fourth-order accurate Runge-Kutta method with stable explicit time steps, and small enough so that the error is dominated by spatial discretisation. The Julia code to solve all the examples can be found online on a Github Repository\footnote[1]{Github link to the repository: \url{https://github.com/Balaje/Summation-by-parts}}.

\subsection{Two Layers}

We consider the elastic wave equation in a two-layered medium $\Omega_1 \cup \Omega_2$, where $\Omega_1 = [0,4\pi]^2$ and $\Omega_2 = [0,4\pi] \times [-4\pi, 0]$. The material properties in each layer is either isotropic or orthotropic  elastic solid. For the isotropic case, we use the material properties $\rho_1 = 1.5, \mu_1 = 4.86, \lambda_1 = 4.8629$ in $\Omega_1$, and $\rho_2 = 3, \,\mu_2 = 27, \,\lambda_2 = 26.9952$ in $\Omega_2$. For the orthotropic material property, we choose $\rho_1 = 1,\, c_{11_{1}} = 4,\, c_{12_{1}} = 3.8, \, c_{22_{1}} = 20$ and $c_{33_{1}} = 2$ in $\Omega_1$, and the material properties in $\Omega_2$ are chosen as $\rho_2 = 0.25$ and $c_{ij_2} = 4c_{ij_1}$ for $i,j = 1,2$.

For initial conditions, we set the initial displacements \sw{in both spatial directions} as the Gaussian 
\begin{align}\label{eq:Gaussian_initial_data}
   u_{ij}=e^{-20((x-x_s)^2 + (y-y_s)^2)},\quad i,j=1,2,
\end{align}
centered at $(x_s, y_s) =(2\pi, 1.6\pi)$ and zero initial conditions for the velocity field and all auxiliary variables. We impose the characteristic boundary conditions at the left boundary $x=0$, the bottom boundary $y=-4\pi$, and the top boundary $y=4\pi$. Outside the right boundary, at $x=4\pi$, we use a PML $[4\pi, 4.4\pi] \times [-4\pi, 4\pi]$ closed by the characteristic boundary condition at the PML boundaries. Because of the PML, the boundary conditions must be modified as 
\begin{align}
Z_{1y}\f{\p\mb{u_1}}{\p t}+B_1\f{\p\mb{u_1}}{\p y}+C_1^T\f{\p\mb{u_1}}{\p x}+B_1\sigma\mb{w_1} + \sigma Z_{1y}(\mb{u_1}-\mb{q_1})&=0, \quad y=4\pi,\label{TwoLayerBCtop}\\
Z_{ix}\f{\p\mb{u}_i}{\p t}-A_i\f{\p\mb{u}_i}{\p x}-C_i\f{\p\mb{u}_i}{\p y}+A_i\sigma\mb{v}_i&=0, \quad x=0,\ \revv{{i}=1,2},\label{TwoLayerBCleft}\\
Z_{ix}\f{\p\mb{u}_i}{\p t}+A_i\f{\p\mb{u}_i}{\p x}+C_i\f{\p\mb{u}_i}{\p y}-A_i\sigma\mb{v}_i&=0, \quad x=4.4\pi,\ \revv{{i}=1,2},\label{TwoLayerBCright}\\
Z_{2y}\f{\p\mb{u_2}}{\p t}-B_2\f{\p\mb{u_2}}{\p y}-C_2^T\f{\p\mb{u_2}}{\p x}-B_2\sigma\mb{w_2}+\sigma Z_{2y}(\mb{u_2}-\mb{q_2})&=0, \quad y=-4\pi,\label{TwoLayerBCbottom}
\end{align}
see the derivation in \cite{Duru2014K}. 
The impedance matrices $Z_{ix}$ and $Z_{iy}$ are given by
$$
Z_{ix} =
\begin{bmatrix}
\rho_ic_{px i} && 0\\
0&& \rho_ic_{sx i} \\
\end{bmatrix}, 
\quad
Z_{iy} =
\begin{bmatrix}
\rho_ic_{sy i} && 0\\
0&& \rho_ic_{py i} \\
\end{bmatrix},
$$
and the wave speeds $c_{px i}, c_{py i}, c_{sx i}, c_{sy i}$ are defined in \eqref{elastic_wave_speeds_an}, for each layer.

The PML damping function is 
\begin{equation}\label{eq:damping_func}
\begin{split}
&\sigma\left(x\right) = \left \{
\begin{array}{rl}
0 \quad {}  \quad {}& \text{if} \quad x \le L_x,\\
\sigma_0\Big(\frac{x-L_x}{\delta}\Big)^3  & \text{if}  \quad x \ge L_x ,
\end{array} \right.
\end{split}
\end{equation}
where the damping strength is 
\begin{equation}\label{eq:damping_strength}
\sigma_0=\frac{4c_{p,\max}}{2\delta}\log\left(\frac{1}{Ref}\right).
\end{equation}
Here, $c_{p,\max}=\max(c_{p1},c_{p2})$, $c_{p1} = \max(c_{px 1}, c_{py 1})$ and $c_{p2}= \max(c_{px 2}, c_{py 2})$ are the maximum pressure wave speeds in $\Omega_1$ and $\Omega_2$, respectively. The parameter $L_x = 4\pi$ is the length of the domain,  $\delta=0.1 L_x$ is the width of the PML and $Ref=10^{-4}$ is the relative PML modeling error \revv{\cite{Liu2023}, which is the residual error arising after domain truncation}. Additionally, we choose the CFS parameter $\alpha=0.05\sigma_0$ in both subdomains.


\begin{figure}
    \includegraphics[width=0.32\textwidth]{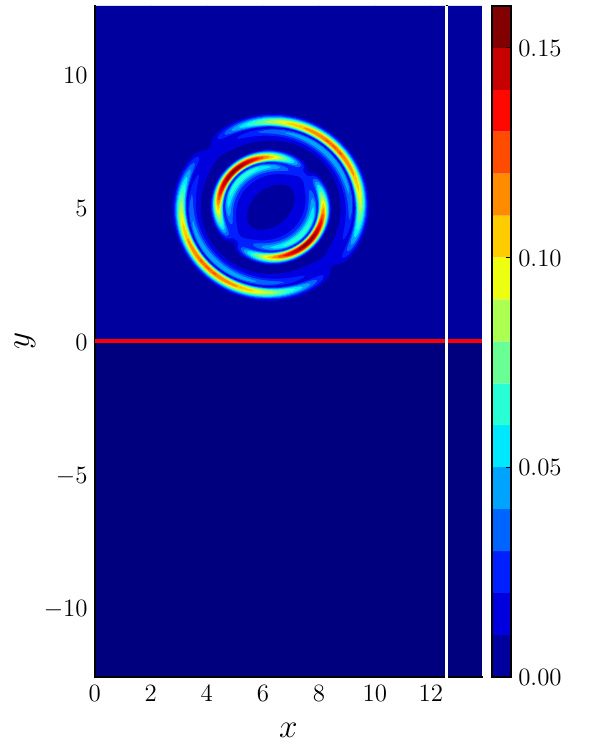}        
    \includegraphics[width=0.32\textwidth]{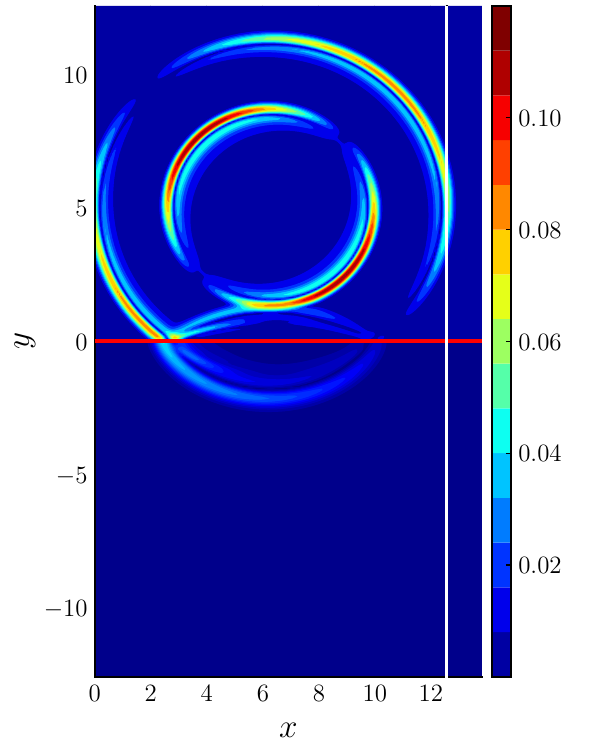}        
    \includegraphics[width=0.32\textwidth]{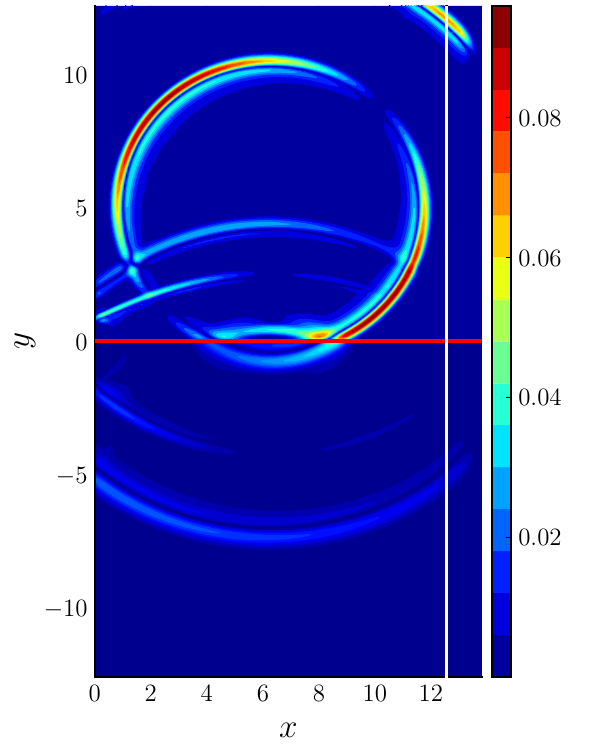}     
    \caption{The solution at three time points $t=1, 2, 3$ in a piecewise isotropic medium. The red, solid curve denotes the interface between the two layers. The region to the right of the black, dashed line is the PML.}
    \label{fig:isotropic-2-layer-cartesian}
\end{figure}
\begin{figure}
    \includegraphics[width=0.32\textwidth]{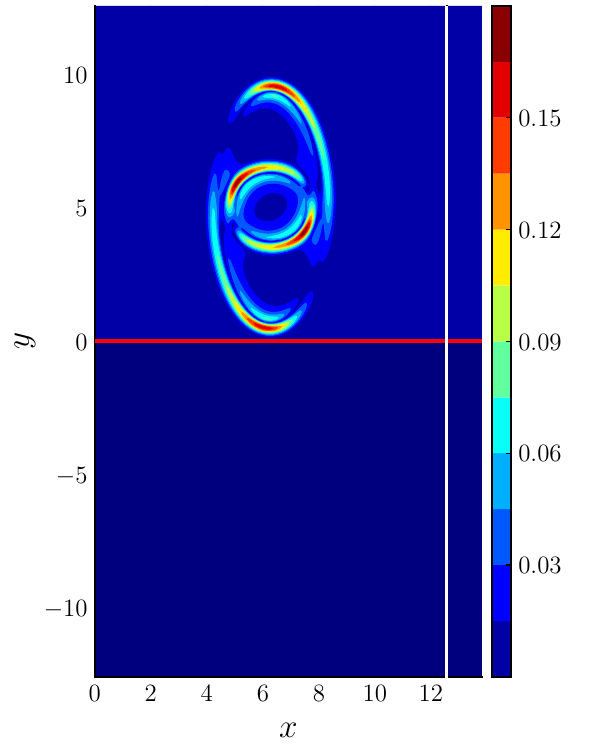}        
    \includegraphics[width=0.32\textwidth]{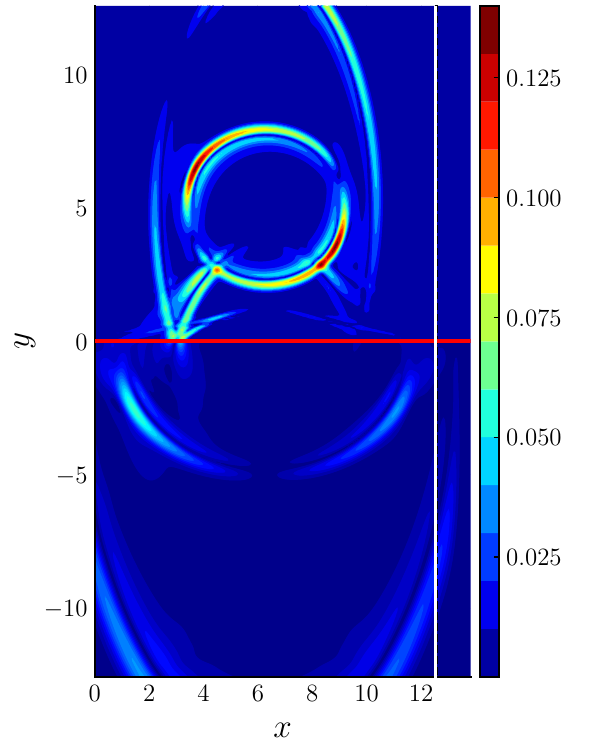}        
    \includegraphics[width=0.32\textwidth]{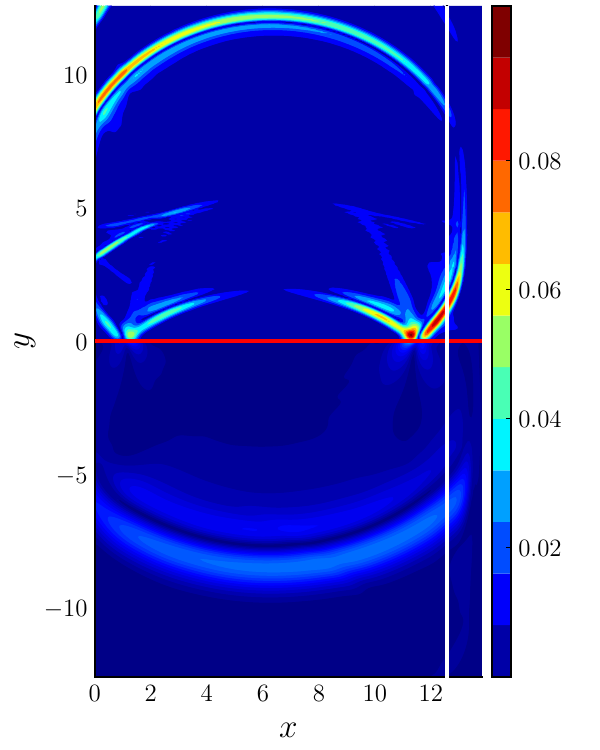}     
    \caption{Solution at three time points $t=1, 2, 5$ in a piecewise anisotropic, othrotropic medium. The red, solid curve denotes the interface between the two layers. The region to the right of the black, dashed line is the PML.}
    \label{fig:anisotropic-2-layer-cartesian}
\end{figure}

In Figures~\ref{fig:isotropic-2-layer-cartesian}~and~\ref{fig:anisotropic-2-layer-cartesian}, we plot the numerical solutions at three time points for the isotropic and anisotropic media, respectively. In both cases, the initial data is a Gaussian \eqref{eq:Gaussian_initial_data} in the top layer. At $t=1$, we observe that a wave mode propagates at the same speed in the two spatial directions in the isotropic medium but at different speeds in the anisotropic medium. At $t=2$, the elastic waves have propagated into the bottom layer in the middle panel, where the effects of discontinuity are observed. In the last panel, we observe that the waves coming into the PML are effectively absorbed without reflections. 

\begin{figure}
    \begin{subfigure}{0.5\textwidth}
        \includegraphics[width=\textwidth]{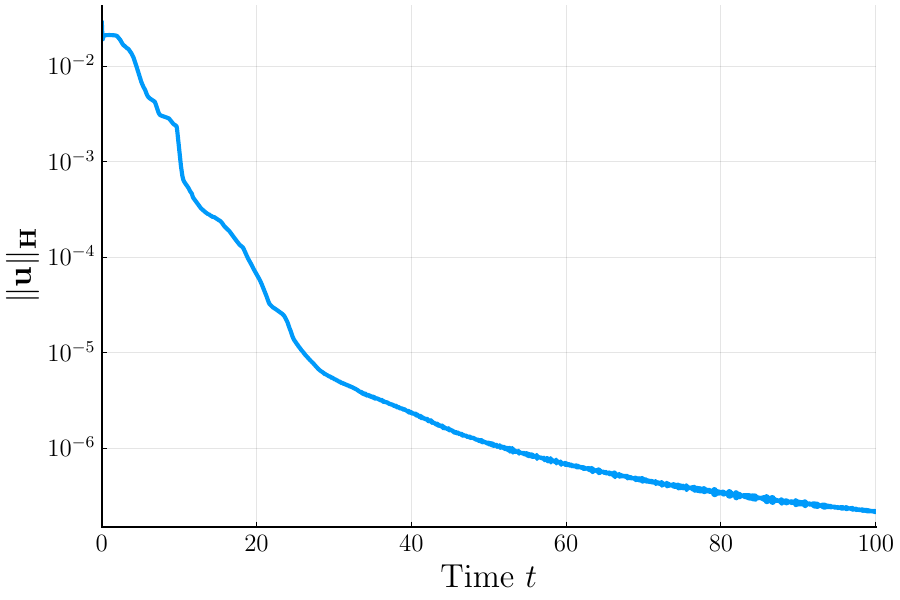}
    \end{subfigure}
    \begin{subfigure}{0.5\textwidth}
        \includegraphics[width=\textwidth]{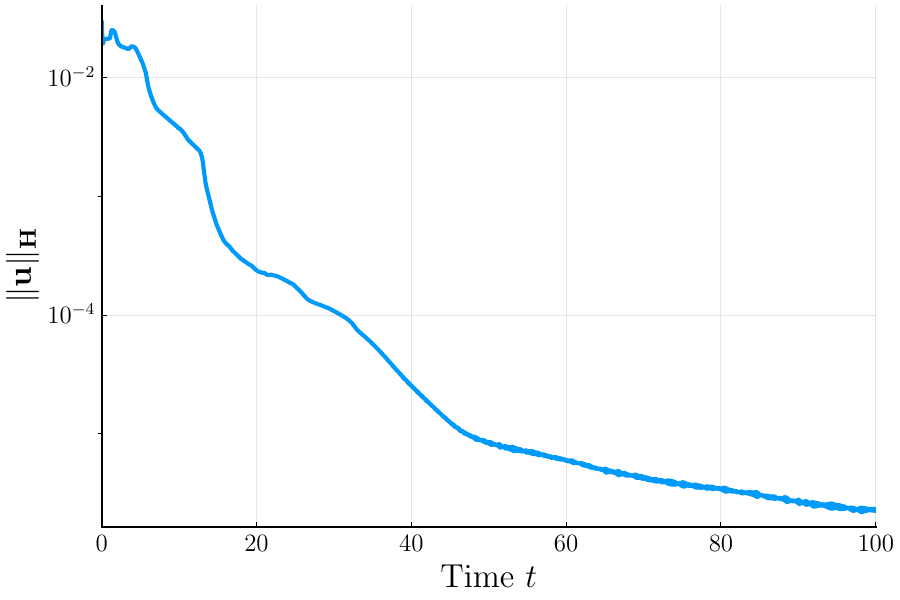}
    \end{subfigure}
    \caption{The quantity $\| \mathbf{u} \|_{H}$ with $t$ for the isotropic (left) and the orthotropic (right) media.}
    \label{fig:dispVsTime}
\end{figure}

\sloppy In Figure~\ref{fig:dispVsTime}, we plot the $l_2$-norm, $\| \mathbf{u} \|_{H} = \sqrt{\sum_{i=1}^2 \mathbf{u}_i^T \mathbf{H} \mathbf{u}_i}$, of the numerical solutions in time, where $i$ is the layer index and $\mathbf{H}$ is the discrete norm associated with the SBP operator. We observe that $\| \mathbf{u} \|_{H}$ decays monotonically in both the isotropic and anisotropic media. Note that at the final time $t=100$ the largest amplitude is about $10^{-7}$, demonstrating the numerical stability and the effectiveness of PML.

\begin{figure}
    \begin{subfigure}{0.5\textwidth}
        \includegraphics[width=\textwidth]{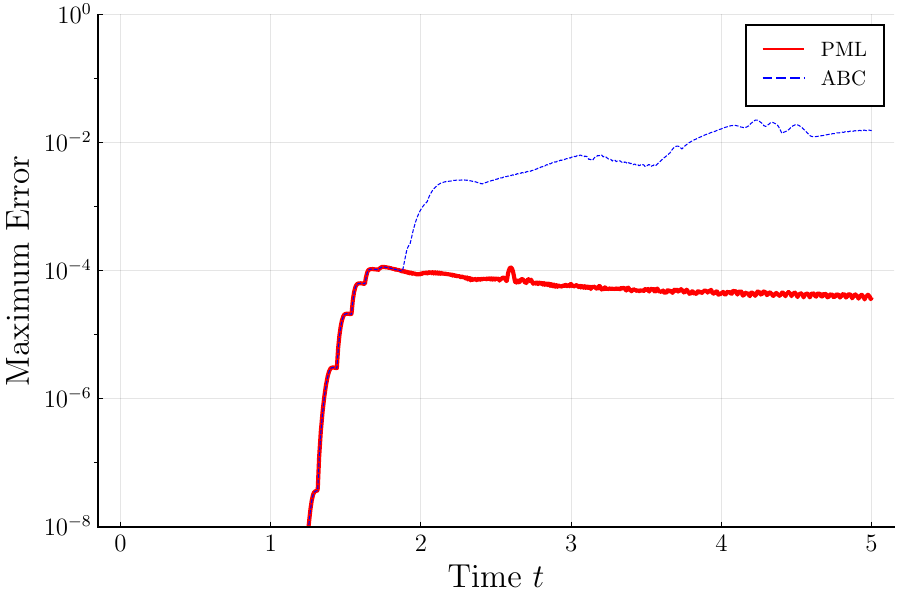}
    \end{subfigure}
    \begin{subfigure}{0.5\textwidth}
        \includegraphics[width=\textwidth]{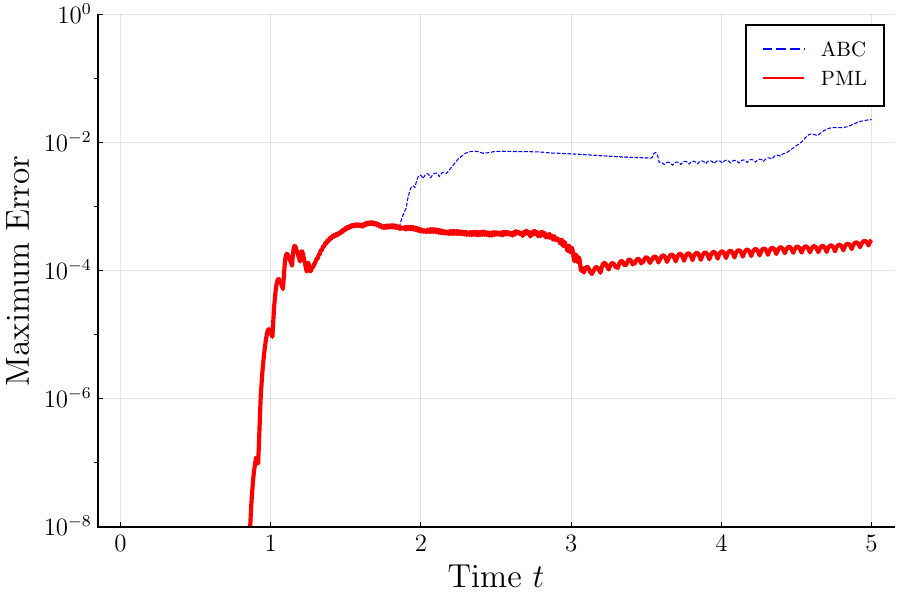}
    \end{subfigure}
    \caption{The maximum error for the isotropic (left) and the orthotropic (right) media. ABC is the absorbing boundary condition.}
    \label{fig:errVsTime}
\end{figure}

Finally, we compare the absorbing property of the PML model with the first order absorbing boundary conditions (ABC) \cite{doi:10.1061/JMCEA3.0001144}. We compute a solution in the large domain $[0,4\pi]\times[-4\pi,12\pi]$, which is the original domain extended three times in the positive $x$ direction, and regard the part of the solution in  $[0,3.6\pi]\times[-4\pi,4\pi]$ as a reference solution. 
As before, we consider the initial displacement \eqref{eq:Gaussian_initial_data}
and zero initial conditions for the velocity and the auxiliary variables. In Figure~\ref{fig:errVsTime}, we plot the PML error defined as the maximum norm of the difference between the PML solution and the reference solution, and the ABC error that is defined analogously as the maximum norm of the difference between the solution computed by using the ABC on all boundaries and the reference solution. We observe that the PML error is about two order of magnitude smaller than the ABC error in both isotropic and anisotropic media.

\subsection{Four layers}
Next we demonstrate extension of the results to multiple elastic layers.
We consider the elastic wave equation in domain  $\Omega=[0,40]\times[-40,0]$. The medium has a four-layered structure and the material parameters are summarized in Table \ref{tab_mp}. In each layer, the material property is homogeneous and isotropic.  At the interface between two adjacent layers, the material property is discontinuous, and the equations are coupled by imposing continuity of displacement and traction in the form of \eqref{int_pml}. 
At time $t=0$, we initialise the displacement fields as the Gaussian
$$
{u}_{ij} =  e^{-5((x-x_s)^2+(y-y_s)^2)},\quad {i}=1,2,3,4,\ j=1,2
$$
centered in the middle of Layer 2, that is  $(x_s, y_s) =(20, -15)$. 
\begin{table}
    \centering
    \begin{tabular}{ccccc}
        Layer & $\rho$ & $c_s$ & $c_p$ & Domain    \\
        \hline 
        1 & 1.5 & 1.8 & 3.118 & $[0,40] \times [-10,0]$ \\
        2 & 1.9 & 2.3 & 3.984 & $[0,40] \times [-20,-10]$ \\
        3 & 2.1 & 2.7 & 4.667 & $[0,40] \times [-30,-20]$ \\
        4 & 3   & 3   & 5.196 & $[0,40] \times [-40,-30]$ \\
            \hline
    \end{tabular}
    \caption{Material properties in the four layers.}
    \label{tab_mp}
\end{table}

We impose a traction free boundary condition at the top boundary $y=0$. At the left, bottom and right boundaries, we add a PML of width $\delta = 4$ which is 10\% of the computational domain in $x$. The PML in the $y$-direction introduces a new auxiliary variable in the governing equation, see Appendix A.  At the boundaries of the PML ($x=\left\{-4, 44\right\}$ and $y = -44$), we impose the characteristic boundary condition. Because of the PML, the boundary conditions must be modified as 
\begin{align}
B_1\f{\p\mb{u_1}}{\p y}+C_1^T\f{\p\mb{u_1}}{\p x}+ \sigma_x B_1\mb{w_1}&=0, \quad y=0,\label{FourLayerBCtop}\\
Z_{ix}\f{\p\mb{u}_i}{\p t}-A_i\f{\p\mb{u}_i}{\p x}-C_i\f{\p\mb{u}_i}{\p y}+A_i\sigma_x\mb{v_i} + \sigma_y Z_{ix}(\mb{u}_i - \mb{q}_i)&=0, \quad x=0,\ {i}=1,2,3,4,\label{FourLayerBCleft}\\
Z_{ix}\f{\p\mb{u}_i}{\p t}+A_i\f{\p\mb{u}_i}{\p x}+C_i\f{\p\mb{u}_i}{\p y}-A_i\sigma_x\mb{v_i} + \sigma_y Z_{ix}(\mb{u}_i - \mb{q}_i)&=0, \quad x=44,\ {i}=1,2,3,4,\label{FourLayerBCright}\\
Z_{4y}\f{\p\mb{u_4}}{\p t}-B_4\f{\p\mb{u_4}}{\p y}-C_4^T\f{\p\mb{u_4}}{\p x}-B_4\sigma_x\mb{w_4}+\sigma_x Z_{4y}(\mb{u_4}-\mb{q_4})&=0, \quad y=-40.\label{FourLayerBCbottom}
\end{align}
More precisely, on the $y$-boundaries the modified traction includes the auxiliary variable $\mb{w}$. In addition,  the time derivative in the characteristic boundary condition introduces a lower order term, see  \eqref{FourLayerBCbottom}. Similarly, on the $x$-boundaries, the modified traction includes the auxiliary variable $\mb{v}$.

%

Inside the PML of all four layers, we choose the damping functions  $\sigma_x(x)$, $\sigma_y(y)$ which are cubic monomials similar to \eqref{eq:damping_func}
and the damping strength $\sigma_0 > 0$ is given by \eqref{eq:damping_strength} with  $Ref=10^{-4}$ the relative PML modeling error. 
Here, $c_{p,max}=\max_i{c_{pi}}$ is the largest pressure wave speed $c_{pi}$ in $\Omega_i$, $i=1,2,3,4$. Additionally, we choose the CFS parameter $\alpha=0.05\sigma_0$.

We use the same spatial and temporal discretisation parameters as in the previous numerical example. In Figure \ref{fig_FourDomainsSol441}, we plot the solutions at three time points with the grid size $h=0.1$. We observe that at $t=3$, the initial Gaussian displacement field has expanded from its centre across the top three layers and the reflections at the material interfaces are clearly visible.  At $t=5$, the wave has propagated across all four layers, and has interacted with the free surface, at $y=0$, and the characteristic boundary condition, at $x=-4$. The plot at $t=9$ shows that the surface and interface waves entering the PML is effectively absorbed without reflections. 


\begin{figure}
    \begin{subfigure}{0.32\textwidth}
        \includegraphics[width=\textwidth, trim={1.2cm 0 1.5cm 0}, clip]{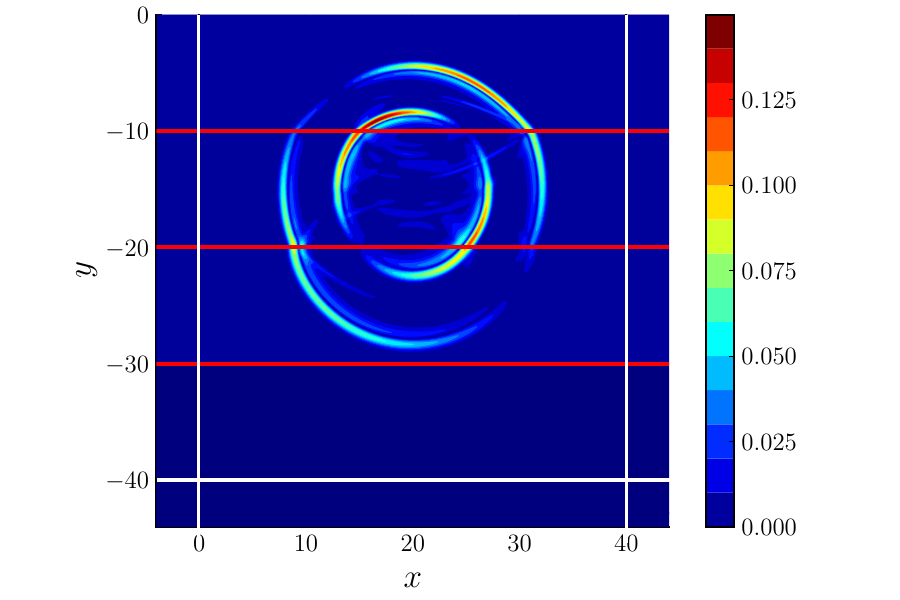}
    \end{subfigure}
    \begin{subfigure}{0.32\textwidth}
        \includegraphics[width=\textwidth, trim={1.2cm 0 1.5cm 0}, clip]{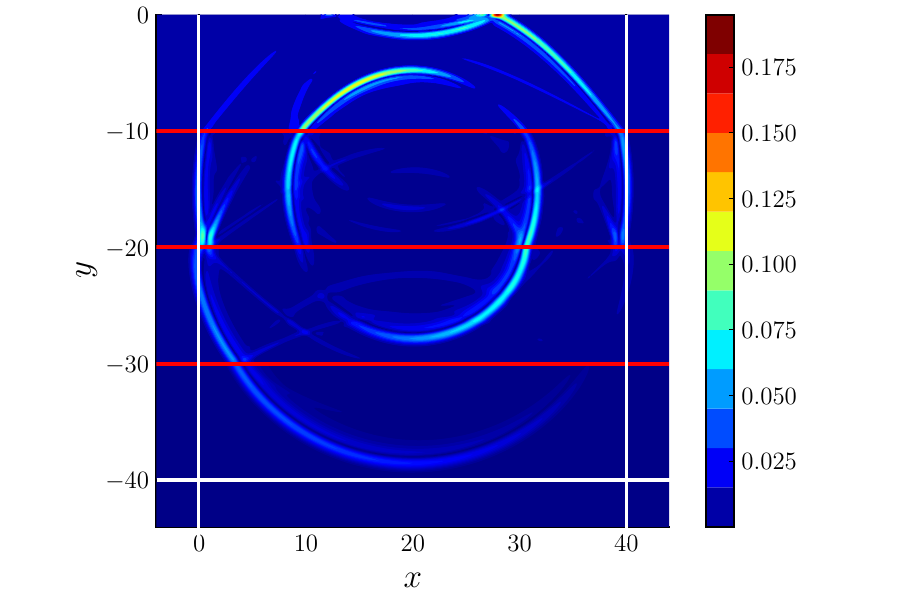}
    \end{subfigure}
    \begin{subfigure}{0.32\textwidth}
        \includegraphics[width=\textwidth, trim={1.2cm 0 1.5cm 0}, clip]{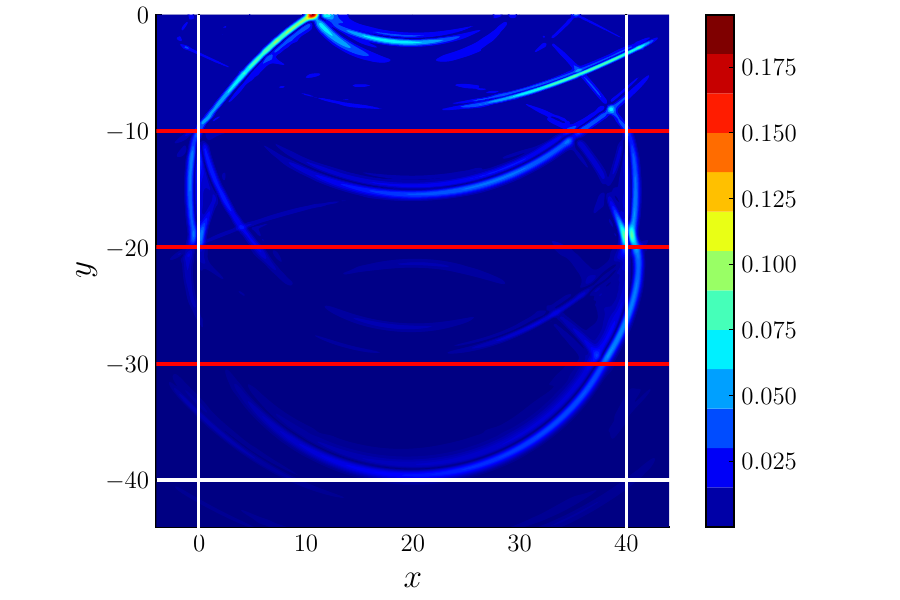}
    \end{subfigure}
    \caption{The absolute displacement $| \mathbf{u} |$ at three time points $t=3,5,9$ with Gaussian initial data and grid size $h=0.1$. The horizontal red lines indicate the material interfaces.}
    \label{fig_FourDomainsSol441}
\end{figure}

Next, we consider an example driven by seismological sources, an explosive  moment tensor point source $F=g M_0 \nabla f_{\delta}$, as the forcing in the governing equation. The moment time function $g$ and the approximated delta function $f_{\delta}$ take the form 
\begin{align*}
    g=e^{-\frac{(t-0.215)^2}{0.15}},\quad f_{\delta}=\frac{1}{2\pi\sqrt{s_1s_2}}e^{-\left(\frac{(x-20)^2}{2s_1}+\frac{(y+15)^2}{2s_2}\right)},
\end{align*}
where the parameters $s_1=s_2=0.5h$ and $M_0=1000$. We note that the peak amplitude of $F$ is located in the middle of Layer 2, that is at $(x_s, y_s) = (20, -15)$. With zero initial data for all variables, we run the simulation with $h=0.1$ 
and plot the solutions  in Figure \ref{fig_FourDomainsSol441Forcing}. 
We have similar observation as the case with a Gaussian initial data.  

\begin{figure}
    \begin{subfigure}{0.32\textwidth}
        \includegraphics[width=\textwidth, trim={1.3cm 0 1.6cm 0}, clip]{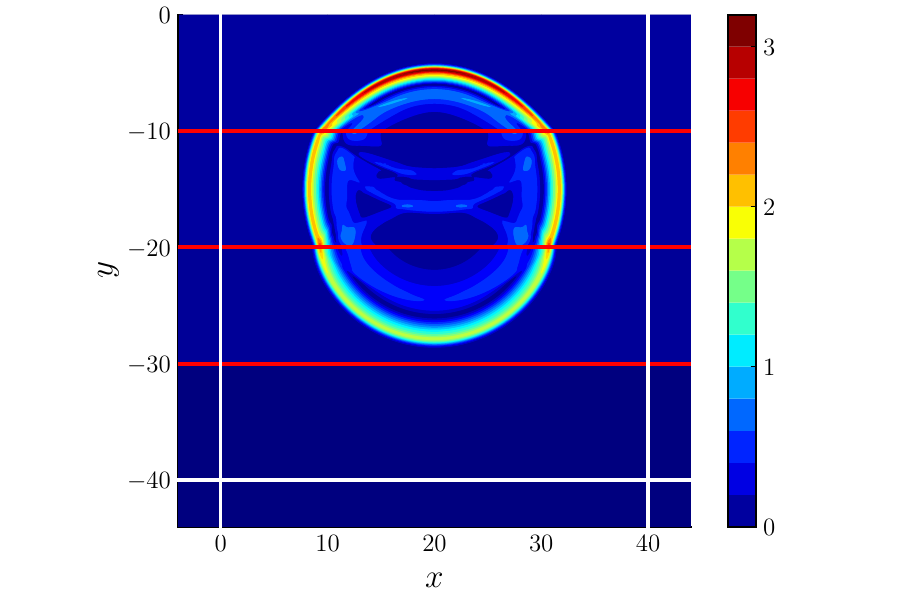}
    \end{subfigure}
    \begin{subfigure}{0.32\textwidth}
        \includegraphics[width=\textwidth, trim={1.3cm 0 1.6cm 0}, clip]{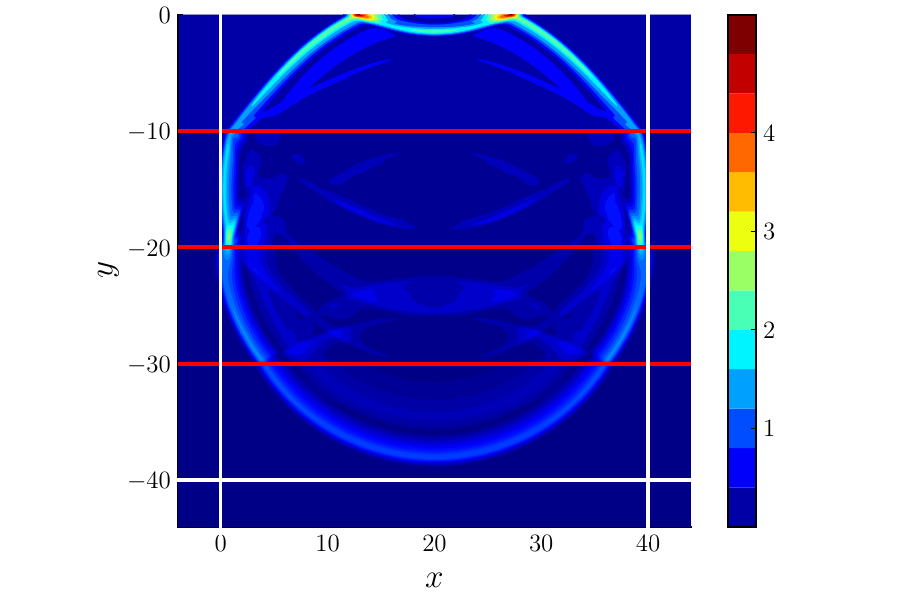}
    \end{subfigure}
    \begin{subfigure}{0.32\textwidth}
        \includegraphics[width=\textwidth, trim={1.3cm 0 1.6cm 0}, clip]{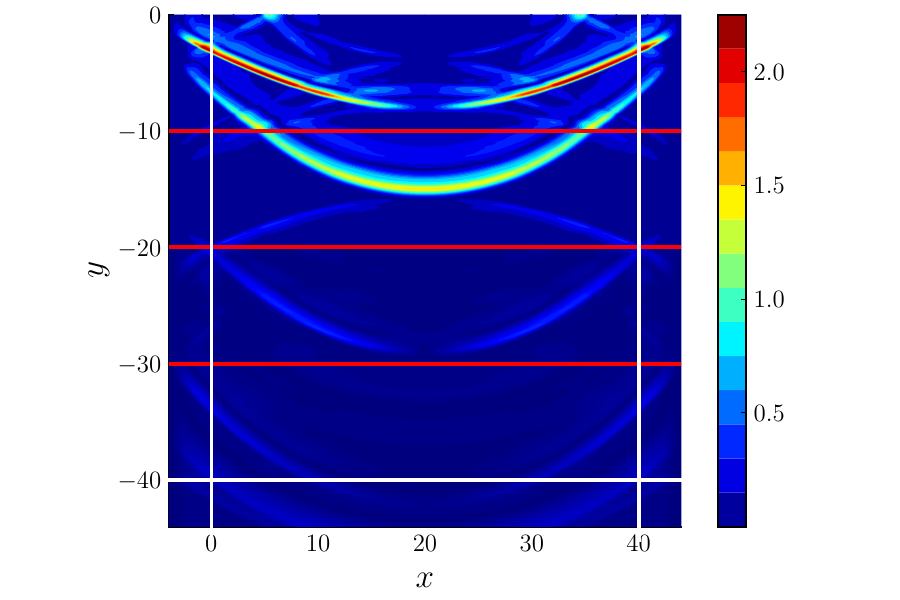}
    \end{subfigure}
    \caption{The solution at three time points $t=3,5,9$ with single point moment source and grid size $h=0.1$. The horizontal red lines indicate the material interfaces.}
    \label{fig_FourDomainsSol441Forcing}
\end{figure}
 To see the stability property of the PML, we plot $\|\mb{u}\|_H$ in time in Figure \ref{fig_FourDomainsSolEnergy}. The first plot corresponds to the case with initial Gaussian  data, and the second plot corresponds to the case with the single point moment source. It is clear that the PML remains stable after a long time $t=1000$.

\begin{figure}
    \begin{subfigure}{0.5\textwidth}
       \includegraphics[width=\textwidth]{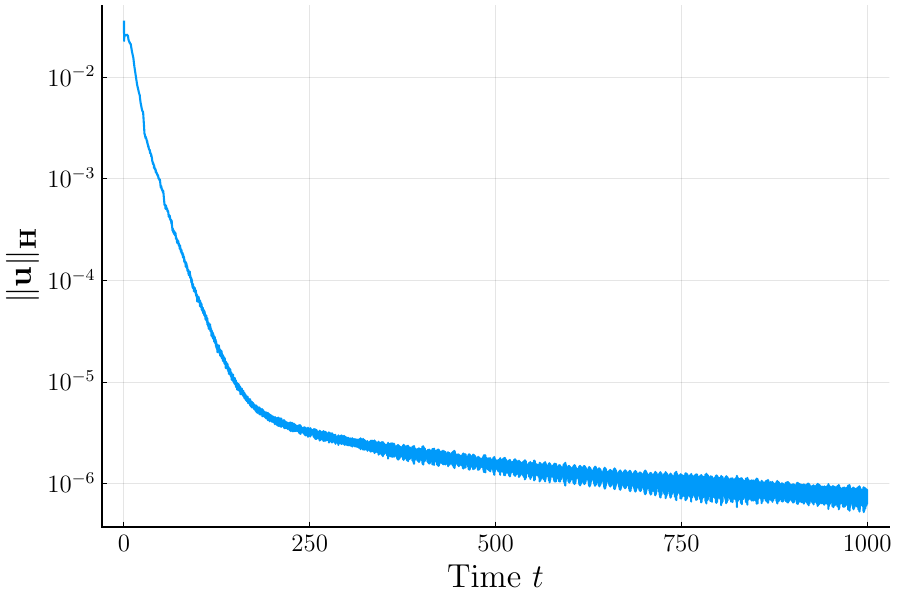}
    \end{subfigure}
    \begin{subfigure}{0.49\textwidth}
        \includegraphics[width=\textwidth]{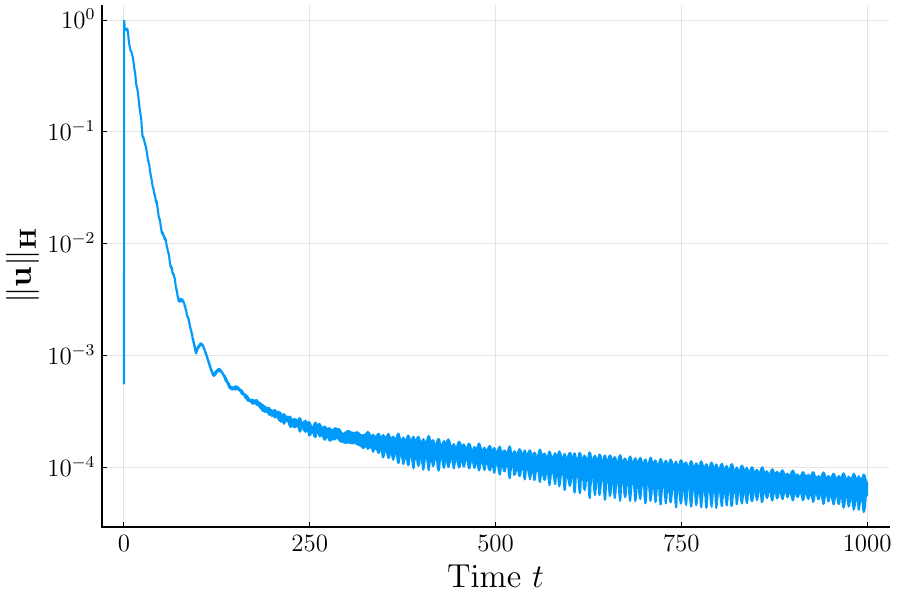}
    \end{subfigure}
    \caption{The quantity  $\|\mb{u}\|_H$ with $h=0.1$ for the Gaussian initial data (left) and the single point moment source (right).}
    \label{fig_FourDomainsSolEnergy}
\end{figure}

\subsection{Curved material interface}
In this section, we consider the elastic wave equation in a two-layered medium in $(x,y)\in [0,4\pi]\times[-4\pi,4\pi]$ with a curved material interface shown in Figure~\ref{fig:two-layer-domain} (left). The two layers are separated by a smooth Gaussian hill centered at the midpoint of the domain that is parameterised by $y = 0.8\pi e^{-10(x-2\pi)^2}$. 
We consider isotropic media with the material properties $\rho_1 = 1.5, \mu_1 = 4.86, \lambda_1 = 4.8629$ in $\Omega_1$, and $\rho_2 = 3, \,\mu_2 = 27, \,\lambda_2 = 26.9952$ in $\Omega_2$. 

We impose the characteristic boundary conditions on the left, bottom and the top boundary. We introduce a PML $[4\pi, 4.4\pi] \times [-4\pi, 4\pi]$ with the damping function \eqref{eq:damping_func}--\eqref{eq:damping_strength}, closed by the characteristic boundary condition at the PML boundaries. We set the initial displacements as the Gaussian \eqref{eq:Gaussian_initial_data} centered at $(x_s, y_s) =(2\pi, 1.6\pi)$
and zero initial conditions for the velocity and auxiliary variables. 

For the spatial discretisation, we construct a curvilinear grid in each subdomain that conforms with its boundaries by the transfinite interpolation technique. The governing PML equation is then transformed to the reference domain $[0,1]^2$ and discretised, see the details of the coordinate transformation in Appendix B, and the reference \cite{Almquist2021}. 
We use a uniform reference grid of size $h$ in both layers, which results in a conforming grid interface, 
see the coarse versions of the finite difference grids on the physical domain in Figure~\ref{fig:two-layer-domain}. The numerical tests were performed on a $201 \times 201$ uniform grid on the reference domain $[0,1]^2$.

\begin{figure}
    \begin{subfigure}{0.5\textwidth}
        \centering
        \includegraphics[width=0.7\textwidth]{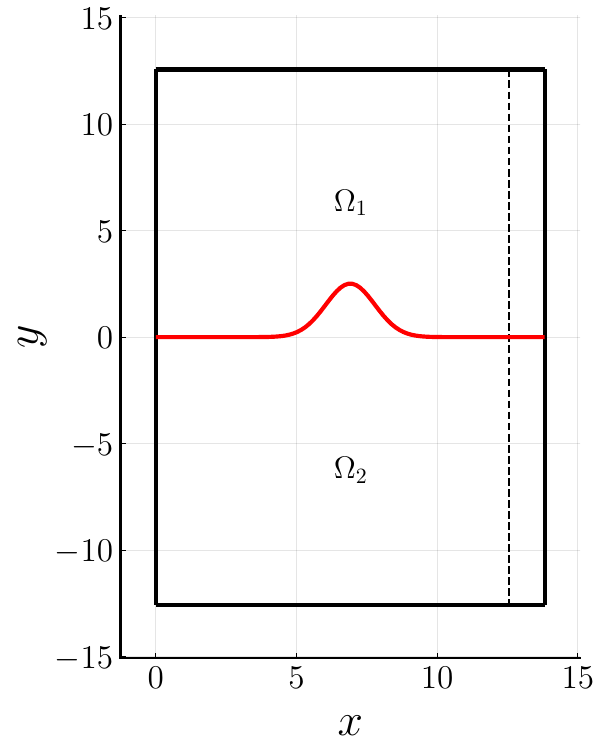}
    \end{subfigure}
    \begin{subfigure}{0.5\textwidth}
        \centering
        \includegraphics[width=0.7\textwidth]{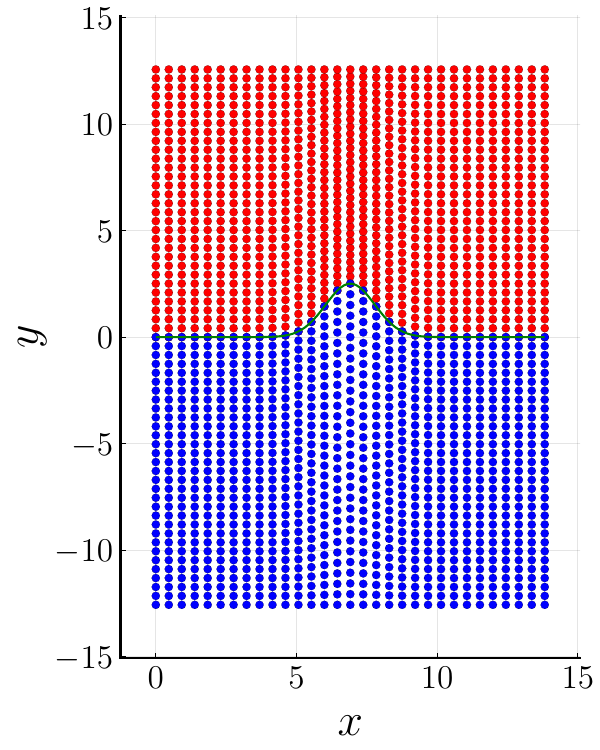}
    \end{subfigure}
    \caption{(Left) Geometry of the two-layered media along with the interface (shown in red, solid line) and the PML boundary (shown in black, dashed line). Coarse versions of an interface-conforming finite difference grid (right).}
    \label{fig:two-layer-domain}
\end{figure}

\begin{figure}
   \begin{subfigure}{0.32\textwidth}
        \includegraphics[width=\textwidth]{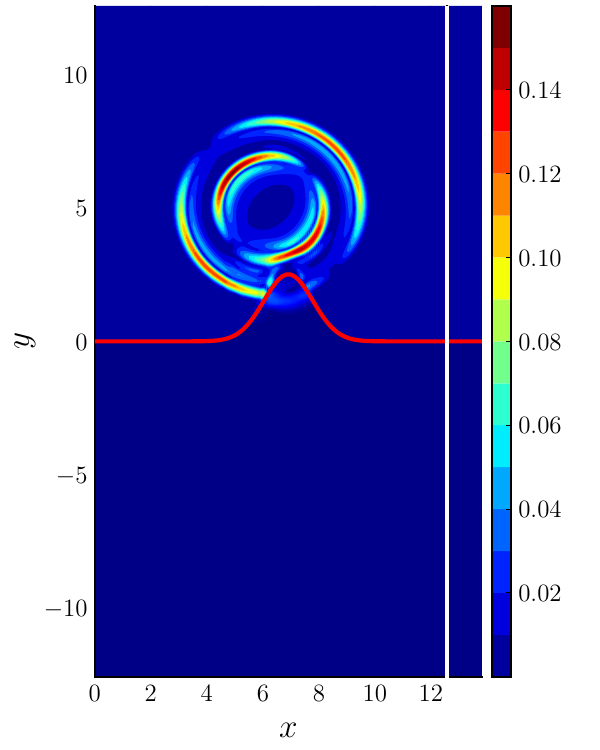}
    \end{subfigure}
    \begin{subfigure}{0.32\textwidth}
        \includegraphics[width=\textwidth]{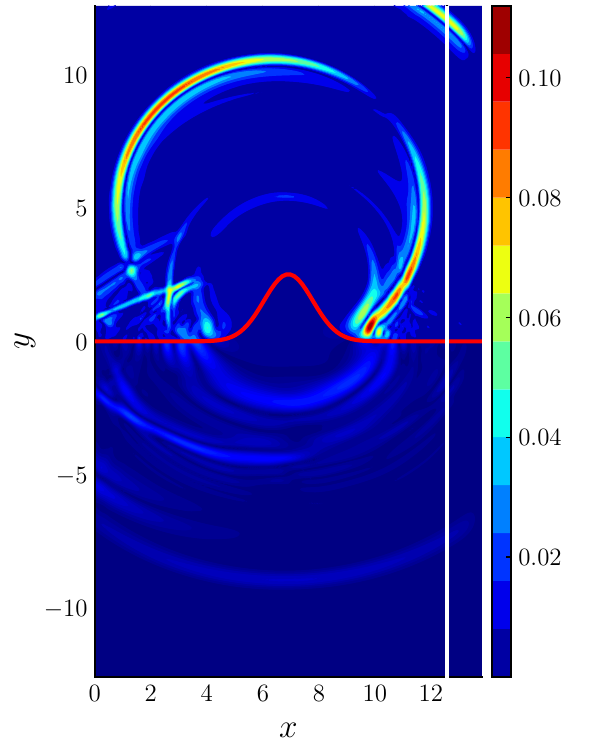}
    \end{subfigure}
    \begin{subfigure}{0.32\textwidth}
        \includegraphics[width=\textwidth]{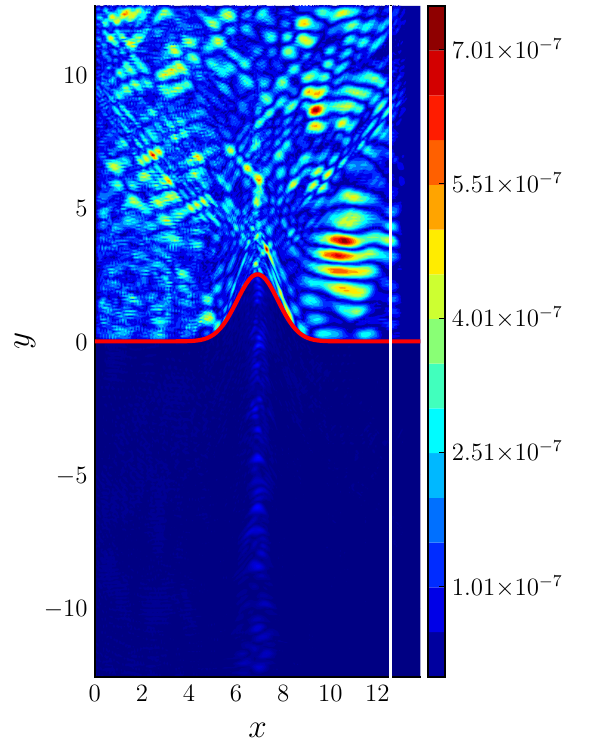}
    \end{subfigure}
    \caption{The solution at three time points $t=1, 3, 100$ in a piecewise isotropic medium. The red, solid curve denotes the interface between the two layers. The region to the right of the black, dashed line is the PML.}
    \label{fig:isotropic-curvilinear}
\end{figure}

In Figure~\ref{fig:isotropic-curvilinear}, we show snapshots of the solution to the PML. We clearly observe the effect of the curved interface from the solution-snapshots at $t=1$ and $t=3$. At $t=3$, we observe that the waves entering the PML are damped. We also show the long-time solution at $T=100$ and observe that the largest amplitude is about $10^{-7}$, demonstrating the numerical stability of PML with a curved material interface.  In Figure~\ref{fig:maxDispVsTime}, we plot  $\| \mathbf{u} \|_{H}$ in time for both  isotropic and orthotropic cases. We observe that $\| \mathbf{u} \|_{H}$ decays monotonically in both cases. Finally, we compare PML with the absorbing boundary condition in Figure~\ref{fig:PMLABCcurved}. We consider curvilinear, isotropic and orthotropic layered media with an interface conforming discretisation as shown in Figure~\ref{fig:two-layer-domain}. For the orthotropic material property, we choose $\rho_1 = 1,\, c_{11_{1}} = 4,\, c_{12_{1}} = 3.8, \, c_{22_{1}} = 20$ and $c_{33_{1}} = 2$ in $\Omega_1$, and the material properties in $\Omega_2$ are chosen as $\rho_2 = 0.25$ and $c_{ij_2} = 4c_{ij_1}$ for $i,j = 1,2$. We observe that the maximum error of the PML solution is about two magnitudes smaller in both the isotropic and orthotropic case.

\begin{figure}
    \begin{subfigure}{0.5\textwidth}
        \includegraphics[width=\textwidth]{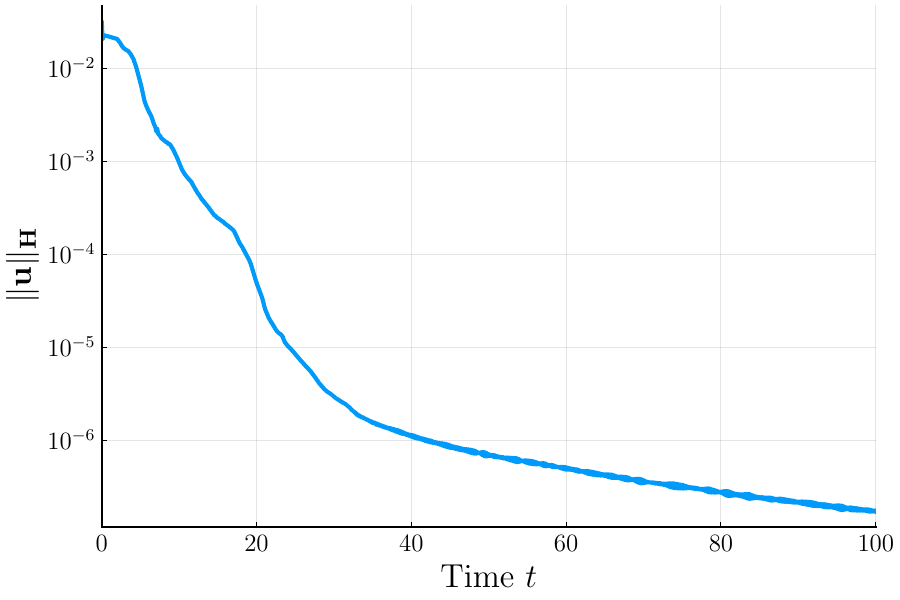}
    \end{subfigure}
    \begin{subfigure}{0.5\textwidth}
        \includegraphics[width=\textwidth]{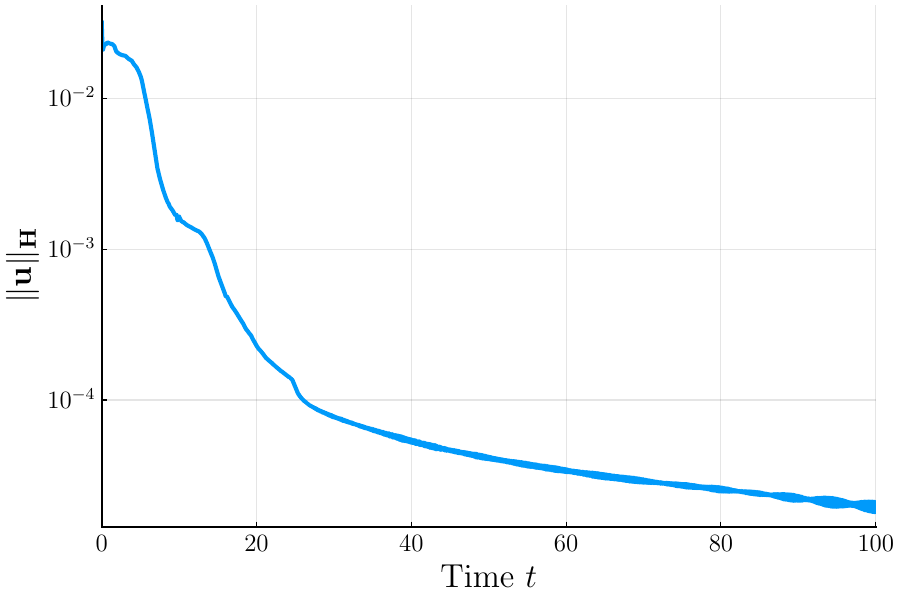}   
    \end{subfigure}
    \caption{The quantity $\| \mathbf{u} \|_{H}$ with $t$ for the conforming finite difference grid for the isotropic case (left) and the orthotropic case (right).}
    \label{fig:maxDispVsTime}
\end{figure}

\begin{figure}
    \begin{subfigure}{0.5\textwidth}
            \includegraphics[width=\textwidth]{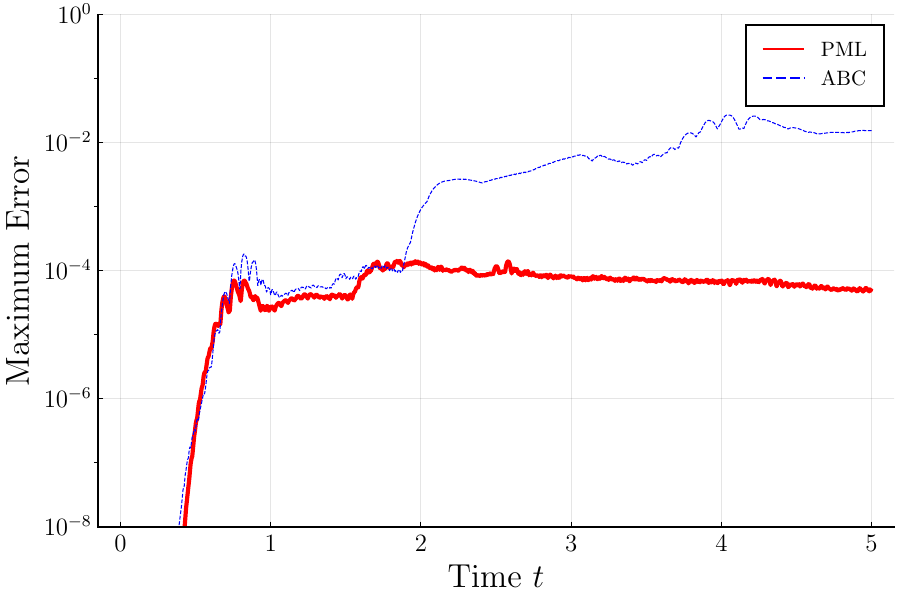}
    \end{subfigure}
    \begin{subfigure}{0.5\textwidth}
            \includegraphics[width=\textwidth]{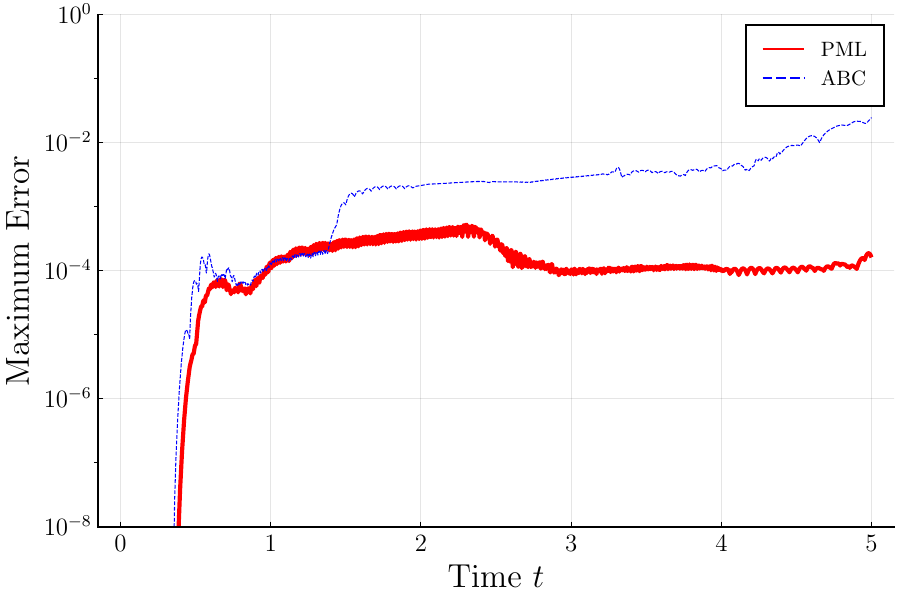}
    \end{subfigure}
    \caption{The maximum error for the isotropic (left) and the orthotropic (right) curvilinear media. ABC is the absorbing boundary condition. 
    }  
    \label{fig:PMLABCcurved}
\end{figure}



\subsection{Marmousi Model}
In this section, we consider the application of the PML to a heterogeneous and discontinuous elastic solid. We use Marmousi2 \cite{marmousi2}, a large geological dataset based upon the geology from the North Quenguela Trough in the Quanza Basin of Angola. The Marmousi2 is a fully elastic model which contains the density of the material and the $p-$and~$s-$wave speeds. In Figure~\ref{fig:s-wave-marmousi}, we show the plots of the $p-$and~$s-$wave speeds and the density in the material without the water column. 
\begin{figure}[h!]
\centering
    \begin{subfigure}{\textwidth}
        \includegraphics[width=\textwidth]{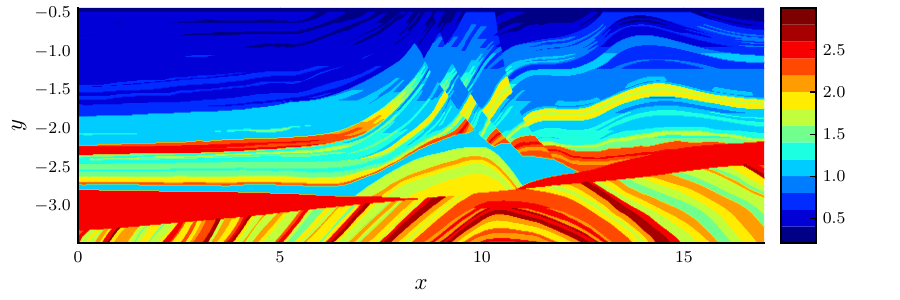}
    \end{subfigure}
    \begin{subfigure}{\textwidth}
        \includegraphics[width=\textwidth]{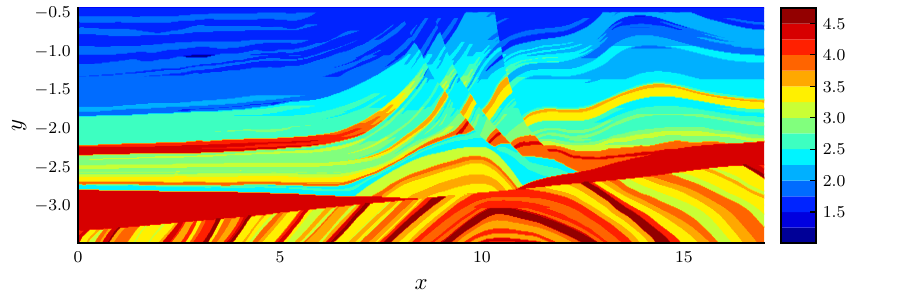}        
    \end{subfigure}
    \begin{subfigure}{\textwidth}
        \includegraphics[width=\textwidth]{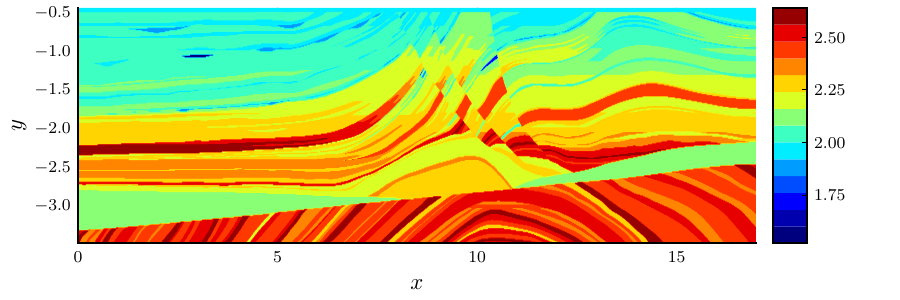}        
    \end{subfigure}
    \caption{The s-wave speed (top panel), p-wave speed (middle panel) and the density (low panel) for the Marmousi model.
    }
    \label{fig:s-wave-marmousi}
\end{figure}
The material is highly heterogeneous containing various structural elements including water, water-wet sand channels, oil-gas mixtures, salt and shale. The full computational model is defined on a rectangular domain $[0.0, 16.9864] \times [-3.4972, -0.44964]$ sampled on a $13601 \times 2801$ uniform grid. Since the primary goal of the section is to demonstrate the stability of the PML, to save computational cost, we consider the down-sampled version of the model on $1201 \times 241$ uniform grid. 

\rev{
We divide the domain into two layers by introducing an interface on a naturally-occurring discontinuity with large jump in the material parameter. 
The material interface is given by the straight line between the points $P_1= (0, -3.34)$ and $P_2 = (16.9864, -2.47)$. 
Since this interface is not parallel to the axes, we construct a curvilinear grid in each layer in the same way as in the previous example with a curved interface.   
In the reference grid, we use $1201 \times 201$ points in Layer 1 and $1201 \times 41$ points in Layer 2,  resulting in a conforming grid interface. The transformed equation and the interface conditions take the same form as the case with a curved interface, see Appendix B. As the focus is stability of the PML, we do not attempt to resolve the heterogeneous material properties within each layer, and discretise the governing equation using the standard SBP finite difference operators with variable coefficients \cite{Mattsson2012}.   

We consider the PML damping functions  $\sigma_x(x)$, which is a cubic monomial similar to \eqref{eq:damping_func}
with the PML width $\delta= 0.1L$, where $L = 16.9864$ is the length of the domain. Inside the PML, we use the original parameters from the Marmousi model. }

We consider the explosive moment tensor point sources $F = gM_0\nabla f_{\delta}$ as the forcing in the governing equation. The function $g$ and the approximated delta function $f_{\delta}$ is given by
$$
g = e^{-\frac{(t-0.215)^2}{0.15}}, \quad f_{\delta} = \frac{1}{2\pi\sqrt{s_1 s_2}}\sum_{i=1}^3 e^{-\left( \frac{(x-x_i)^2}{2s_1} + \frac{(y-y_i)^2}{2s_2} \right)},
$$
where the parameters $s_1, s_2 = 0.5 h$,  half the grid-spacing in the physical domain, and $M_0 = 1000$. We apply the forcing at three different locations $(x_1, y_1) = (2.54796, -1.04916)$, $(x_2, y_2) = (8.4932, -1.04916)$ and $(x_3, y_3) =  (14.43844, -1.04916)$. We assume zero initial conditions for the displacement, velocity and the auxiliary variables. 
As the number of unknowns is relatively large, $\sim 3.5 \cdot 10^6$, we solve till the final time $T=10$. At the top boundary $(y=-0.44964)$, we use a traction-free boundary condition and on the left $(x=0)$, bottom $(y = -3.4972)$ and right $(x = 16.9864)$ boundaries, we use the characteristic  boundary condition. \rev{These boundary conditions take the same form as in \eqref{FourLayerBCtop}-\eqref{FourLayerBCbottom}. }

In Figure~\ref{fig:marmousi_snapshots}, we show the snapshots of the solution at four different times. We observe the scattering of elastic waves  due to the heterogeneous and discontinuous nature of the medium. At $t=0.5$, we observe that  the waves reach the top boundary and gets reflected due to the traction-free boundary condition. At $t=1.0$, we observe the effects of the interface as the waves reach the second layer. On the right-hand side of the domain, we see that the waves near the PML gets damped. From the snapshots at $t=1.5$ and $t=10$, we observe that the waves generated due to the three point-sources interact with each other and then subsequently reach the PML. We observe more clearly that the waves reaching the PML gets damped. In Figure~\ref{fig:norm-vs-t-marmousi}, we observe that $\|\mathbf{u}\|_{\mathbf{H}}$ decays monotonically with respect to $t$. \sw{This indicates the stability of the proposed PML model even when it is not covered by our stability analysis}. 

\begin{figure}
\centering
    \begin{subfigure}{\textwidth}
        \includegraphics[width=\textwidth]{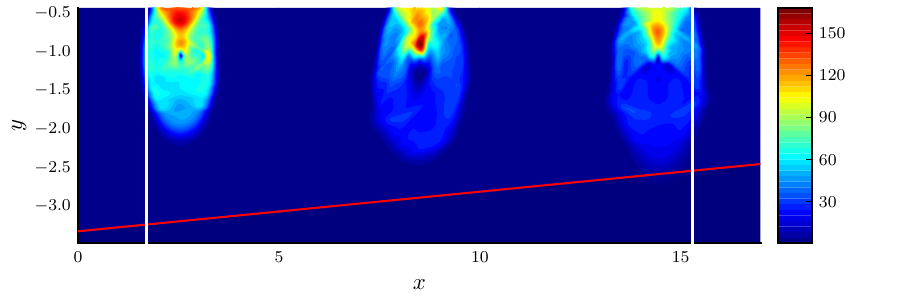}
    \end{subfigure}
    \begin{subfigure}{\textwidth}
        \includegraphics[width=\textwidth]{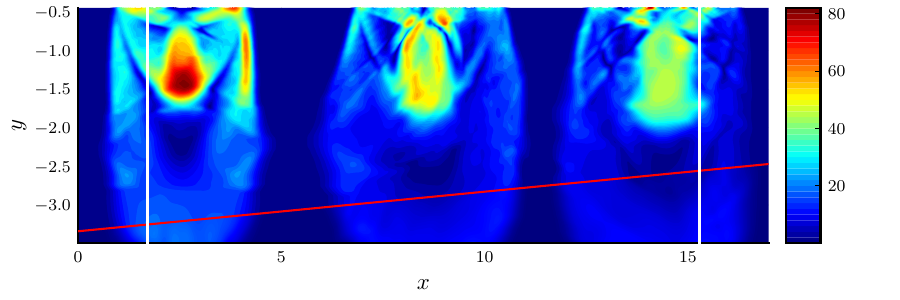}
    \end{subfigure}
    
    \begin{subfigure}{\textwidth}
        \includegraphics[width=\textwidth]{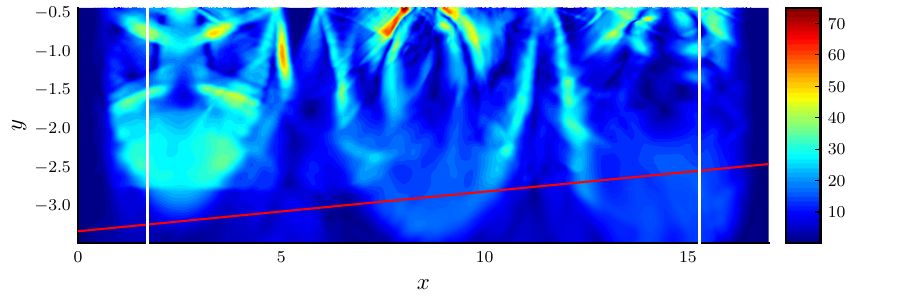}
    \end{subfigure}
    \begin{subfigure}{\textwidth}
        \includegraphics[width=\textwidth]{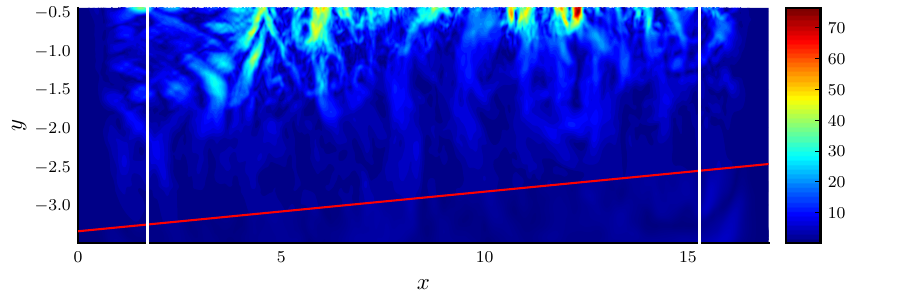}
    \end{subfigure}     


    \caption{(Top to bottom) Snapshots of the numerical solution  in a heterogeneous and discontinuous elastic solid defined by the \kd{Marmousi} model and truncated by PML model. The solutions are plotted at times $t = 0.5, 1, 1.5$ and $10$.}
    \label{fig:marmousi_snapshots}
\end{figure}

\begin{figure}
\centering
         \includegraphics[width=0.65\textwidth]{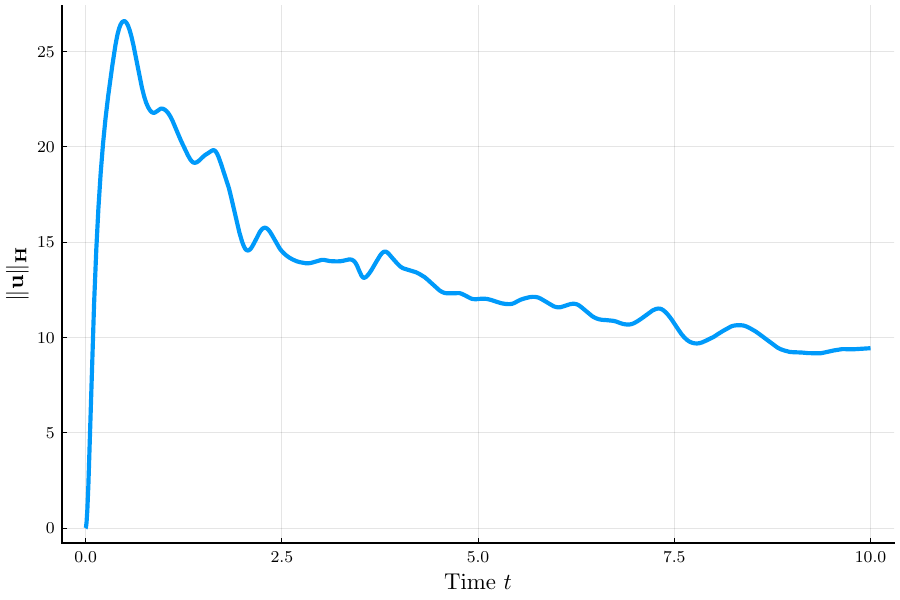}        
    \caption{The  quantity $\| \mathbf{u}\|_{\mathbf{H}}$ with $t$ for the Marmousi2 model.}
    \label{fig:norm-vs-t-marmousi}
\end{figure}

\section{Conclusion}\label{sec:conclusions}
We have analysed the stability of the PML for the elastic wave equation with piecewise constant material parameters and interface conditions at material interfaces. The elastic wave equation and  the interface conditions, without the PML, satisfy an energy estimate in physical space. Alternatively, a mode analysis can also be used to prove that exponentially growing modes are not supported by  the elastic wave equation subject to the interface conditions.  In particular, the normal mode analysis in Laplace space for interface waves gives  a boundary matrix $\mathcal{C}(s, k_x)$ of which the  determinant is a homogeneous function $\mathcal{F}(s, k_x)$ of $(s, k_x)$ and does not have any roots $s$ with a positive real part  $\Re{s} >0$ in the complex plane.  When the PML is present,  the energy method is in general not applicable but the normal mode analysis can be used to investigate the existence of exponentially growing modes in the PML. The normal mode analysis when applied to the PML in a discontinuous elastic medium  yields a similar boundary  matrix perturbed by the PML. Our analysis shows that if the PML IVP does not support growing modes, then the PML moves the roots of the determinant $\mathcal{F}(s, k_x)$ further into the stable complex plane. This proves that interface wave modes present at layered material interfaces in elastic solids are dissipated by the PML.
Our analysis builds upon the ideas presented in \cite{Duru2014K} and extends the stability results of  boundary waves (such as Rayleigh waves) on a half-plane elastic solid to   interface wave modes (such as Stoneley waves) transmitted into the PML at a planar interface separating two half-plane elastic solids.
We have presented numerical examples for both isotropic and anisotropic elastic solids verifying the analysis, and demonstrating that interface wave modes decay in the PML. Numerical examples using the Marmousi model \cite{marmousi2} demonstrates the utility of the PML and our numerical method for seismological applications.

\section*{Acknowledgement}
Part of the work was carried out during S. Wang’s research visit at Australian National University (ANU). The financial support from SVeFUM and ANU is greatly appreciated. B. Kalyanaraman was supported by the Kempe foundation (project number JCK22-0012). The authors thank Prof. Martin Almquist from Uppsala University for sharing the data for the Marmousi model.

\appendix

\section{ Perfectly Matched Layers in both spatial directions}
\label{app:1}
To derive the PML model in both spatial directions, we consider the Laplace transformed elastic wave equation~\eqref{lap1} in the transformed coordinate $(\tilde x, \tilde y)$, such that
\begin{align*}
    \frac{d \tilde{x}}{d x} = 1 + \frac{\sigma_x(x)}{\alpha + s} =: \Sx, \qquad \frac{d \tilde{y}}{d y} = 1 + \frac{\sigma_y(y)}{\alpha + s} =: \Sy.
\end{align*}
Here $\sigma_x(x) \ge 0$ and $\sigma_y(y) \ge 0$ are the damping functions and $\alpha \ge 0$ is the complex frequency shift. By introducing a smooth coordinate transform
\begin{align*}
    \frac{\partial }{\partial x} \to \frac{1}{\Sx}\frac{\partial}{\partial x}, \quad \frac{\partial }{\partial y} \to \frac{1}{\Sy}\frac{\partial}{\partial y},
\end{align*}
the PML model in the Laplace space becomes
\begin{equation}
s^2 \rho_i\mb{\widehat u}_i = \f{1}{\Sx}\f{\p}{\p x}\left(\f{1}{\Sx}A_i\f{\p\mb{\widehat u}_i}{\p x}\right)+\f{1}{\Sy}\f{\p}{\p y}\left(\f{1}{\Sy}B_i\f{\p\mb{\widehat u}_i}{\p y}\right)+\f{1}{\Sx\Sy}\f{\p}{\p x}\left(C_i\f{\p\mb{\widehat u}_i}{\p y}\right)+\f{1}{\Sx\Sy}\f{\p}{\p y}\left(C^T_i\f{\p\mb{\widehat u}_i}{\p x}\right),
\end{equation}
$i=1,2$, along with the interface conditions
\begin{equation}
    \mb{\widehat u}_1=\mb{\widehat u}_2,\quad B_1\f{1}{\Sy}\f{\p\mb{\widehat u}_1}{\p y}+C_1^T\f{1}{\Sx}\f{\p\mb{\widehat u}_1}{\p x}=B_2\f{1}{\Sy}\f{\p\mb{\widehat u}_2}{\p y}+C_2^T\f{1}{\Sx}\f{\p\mb{\widehat u}_2}{\p x}.
\end{equation}
Introducing the auxiliary variables 
\begin{align*}
    \hat{\mathbf{v}}_i = \frac{1}{\alpha + s + \sigma_x}\frac{\partial \hat{\mathbf{u}}_i}{\partial x},\quad     \hat{\mathbf{w}}_i = \frac{1}{\alpha + s + \sigma_y}\frac{\partial \hat{\mathbf{u}}_i}{\partial y},\quad \hat{\mathbf{q}}_i = \frac{\alpha}{\alpha + s}\hat{\mathbf{u}}_i, \quad \hat{\mathbf{r}}_i = \frac{\alpha}{\alpha+s}(\hat{\mathbf{u}}_i - \hat{\mathbf{q}}_i) 
\end{align*}
and inverting the Laplace transformed PDE, we obtain the system
\begin{subequations}
{\small
\begin{align}
  &\rho\left( \frac{\partial^2 {\mathbf{u}}_i}{\partial t^2} + (\sigma_x + \sigma_y)\frac{\partial {\mathbf{u}}_i}{\partial t} - \alpha\left(\sigma_x + \sigma_y
  \right)({\mathbf{u}}_i - {\mathbf{q}}_i)  + \sigma_x \sigma_y ({\mathbf{u}}_i - {\mathbf{q}}_i - {\mathbf{r}}_i) \right) =\nonumber \\
  & \qquad \frac{\partial }{\partial x}\left(A_i \frac{\partial {\mathbf{u}}_i}{\partial x} + (\sigma_y - \sigma_x) A_i {\mathbf{v}}_i + C_i \frac{\partial {\mathbf{u}}_i}{\partial y}\right) + \frac{\partial }{\partial y}\left(B \frac{\partial {\mathbf{u}}_i}{\partial y} + (\sigma_x - \sigma_y) B_i {\mathbf{w}}_i + C_i^T \frac{\partial {\mathbf{u}}_i}{\partial x}\right),\label{eq:first}\\
  &\qquad \qquad  \frac{\partial {\mathbf{v}}_i}{\partial t} = \frac{\partial {\mathbf{u}}_i}{\partial x} - (\alpha + \sigma_x){\mathbf{v}}_i,\\
  &\qquad \qquad\frac{\partial {\mathbf{w}}_i}{\partial t} = \frac{\partial {\mathbf{u}}_i}{\partial y} - (\alpha + \sigma_y){\mathbf{w}}_i,\\
  &\qquad \qquad\frac{\partial \mathbf{q}_i}{\partial t} = \alpha(\mathbf{u}_i - \mathbf{q}_i),\\
  &\qquad \qquad\frac{\partial \mathbf{r}_i}{\partial t} = \alpha(\mathbf{u}_i - \mathbf{q}_i - \mathbf{r}_i)\label{eq:extra}.
\end{align}\label{eq:x-y-pml}}%
\end{subequations}
along with the interface conditions
\begin{equation}\label{eq:x-y-pml-int}
    B_1 \frac{\partial {\mathbf{u}}_1}{\partial y} + (\sigma_x - \sigma_y) B_1 {\mathbf{w}}_1 + C_1^T \frac{\partial {\mathbf{u}}_1}{\partial x} = B_2 \frac{\partial {\mathbf{u}}_2}{\partial y} + (\sigma_x - \sigma_y) B_2 {\mathbf{w}}_2 + C_2^T \frac{\partial {\mathbf{u}}_2}{\partial x}.
\end{equation}
Setting $\sigma_y \equiv 0$ reduces \eqref{eq:x-y-pml}--\eqref{eq:x-y-pml-int} to the PML equations discussed in Section \ref{sec:pml_model}. We note that adding the PML along the $y$-direction introduces \eqref{eq:extra} coupled with \eqref{eq:first}.

\section{Elastic wave equation on curvilinear domains} \label{app:2}
Let us assume that the PML model \eqref{eq:PML_physical_space} is defined on curvilinear domains. We introduce a smooth transformation $(q(x,y),r(x,y)) \leftrightarrow (x(q,r),y(q,r))$, for each block, where a point $(q,r)$ in the reference domain is mapped to a point $(x,y)$ in the physical domain. We denote by $\mathbf{J}$ and $\mathbf{J}^{-1}$, the Jacobian matrix and its inverse, defined as
\begin{equation}
    \mathbf{J} = \begin{bmatrix}
                    \displaystyle
                    \frac{\partial x}{\partial q} & \displaystyle \frac{\partial y}{\partial q}\\\\
                    \displaystyle                    
                    \frac{\partial x}{\partial r} & \displaystyle \frac{\partial y}{\partial r}
                \end{bmatrix},\qquad
    \mathbf{J}^{-1} = \begin{bmatrix}
                    \displaystyle
                    \frac{\partial q}{\partial x} & \displaystyle \frac{\partial r}{\partial x}\\\\
                    \displaystyle                    
                    \frac{\partial q}{\partial y} & \displaystyle \frac{\partial r}{\partial y}
                \end{bmatrix}.\nonumber
\end{equation}
We apply the transformation
\begin{align*}
    \frac{\partial }{\partial x} = \frac{\partial }{\partial q}\left(\frac{\partial q}{\partial x}\right) + \frac{\partial }{\partial r}\left(\frac{\partial r}{\partial x}\right), \qquad \frac{\partial }{\partial y} = \frac{\partial }{\partial q}\left(\frac{\partial q}{\partial y}\right) + \frac{\partial }{\partial r}\left(\frac{\partial r}{\partial y}\right),
\end{align*}
and write the elastic wave equation in the reference coordinates as follows.
\begin{align*}
  &|\mathbf{J}|\rho_i\left( \frac{\partial^2 \mathbf{u}_i}{\partial t^2} + (\sigma_x + \sigma_y)\frac{\partial \mathbf{u}_i}{\partial t} - \sigma_x \alpha(\mathbf{u}_i - \mathbf{q}_i) + \sigma_y \alpha(\mathbf{u}_i - \mathbf{q}_i) \right) = \\ 
  & \qquad \qquad \frac{\partial }{\partial q}\left(\tilde{A}_i \frac{\partial \mathbf{u}_i}{\partial q} + \tilde{C}_i \frac{\partial \mathbf{u}_i}{\partial r} + \hat{A}_i \mathbf{v}_i\right) + \frac{\partial }{\partial r}\left(\tilde{B}_i \frac{\partial \mathbf{u}_i}{\partial r} + \tilde{C}_i^T \frac{\partial \mathbf{u}_i}{\partial q} + \hat{B}_i \mathbf{w}_i\right),\\
  &\qquad \frac{\partial \mathbf{v}_i}{\partial t} = \frac{\partial \mathbf{u}_i}{\partial q}\frac{\partial q}{\partial x}  + \frac{\partial \mathbf{u}_i}{\partial r} \frac{\partial r}{\partial x}  - (\alpha + \sigma_x)\mathbf{v}_i,\\
  &\qquad \frac{\partial \mathbf{w}_i}{\partial t} = \frac{\partial \mathbf{u}_i}{\partial q} \frac{\partial q}{\partial y} + \frac{\partial \mathbf{u}_i}{\partial r} \frac{\partial r}{\partial y} - (\alpha + \sigma_y)\mathbf{w}_i,\\
  &\qquad \frac{\partial \mathbf{q}_i}{\partial t} = \alpha(\mathbf{u}_i - \mathbf{q}_i),
\end{align*}
where 
\begin{align*}
   \begin{bmatrix}
      \tilde{A}_i & \tilde{C}_i\\   
      \tilde{C}_i^T & \tilde{B}_i
   \end{bmatrix} = 
    |\mathbf{J}| \begin{bmatrix}
            \mathbf{J}^{-1} & 0 \\
            0 & \mathbf{J}^{-1}
          \end{bmatrix}^T 
          \begin{bmatrix}
            A_i & C_i\\   
            C_i^T & B_i
          \end{bmatrix}&
          \begin{bmatrix}
            \mathbf{J}^{-1} & 0 \\
            0 & \mathbf{J}^{-1}
          \end{bmatrix},\quad
    \hat{A}_i = |\mathbf{J}|(\sigma_y - \sigma_x)\mathbf{J}^{-1}A_i,\\
    \hat{B}_i &= |\mathbf{J}|(\sigma_x - \sigma_y)\mathbf{J}^{-1}B_i.
\end{align*}
The stress tensor $\tau_i$ and the outward normal $\mathbf{n}$ on the boundaries can be written as
\begin{align}
    \tau_i = \begin{bmatrix}
                    \displaystyle
                    A_i \left(\frac{\partial \mathbf{u}_i}{\partial q}\frac{\partial q}{\partial x}  + \frac{\partial \mathbf{u}_i}{\partial r}\frac{\partial r}{\partial x}\right) + C_i \left(\frac{\partial \mathbf{u}_i}{\partial q}\frac{\partial q}{\partial y}  + \frac{\partial \mathbf{u}_i}{\partial r}\frac{\partial r}{\partial y}\right) + (\sigma_y    - \sigma_x)A_i \mathbf{v}_i\\\\
                    \displaystyle
                    C_i^T \left(\frac{\partial \mathbf{u}_i}{\partial q}\frac{\partial q}{\partial x}  + \frac{\partial \mathbf{u}_i}{\partial r}\frac{\partial r}{\partial x}\right) + B_i \left(\frac{\partial \mathbf{u}_i}{\partial q}\frac{\partial q}{\partial y}  + \frac{\partial \mathbf{u}_i}{\partial r}\frac{\partial r}{\partial y}\right) + (\sigma_x - \sigma_y)B_i\mathbf{w}_i
                  \end{bmatrix}, \quad \mathbf{n} = \frac{\mathbf{J}^{-1}\mathbf{\hat{n}}}{|\mathbf{J}^{-1}\mathbf{\hat{n}}|}\nonumber
\end{align}
where $\hat{\mathbf{n}}$ is the corresponding outward normal in the reference domain. Let $\mathbf{n}_{\Gamma,1} = -\mathbf{n}_{\Gamma,2} = \mathbf{n}_{\Gamma}$ denote the outward normal of the interface boundary $\Gamma$ in the physical domain. The interface condition enforcing the continuity of traction is as follows
$$
\tau_1 \cdot \mathbf{n} = \tau_2 \cdot \mathbf{n}.
$$
Without loss of generality, let us assume that the interface is mapped to the side $r=0$, where $\hat{\mathbf{n}}_1 = -\hat{\mathbf{n}}_2 = [0, -1]$. Now in terms of the reference coordinates, the traction on the interface $\Gamma$ satisfies
\begin{align}
\frac{1}{|\mathbf{J}||\mathbf{J}^{-1}{\mathbf{\hat{n}}_1|}}\left(\tilde{B}_1 \frac{\partial \mathbf{u}_1}{\partial r} + \tilde{C}_1^T \frac{\partial \mathbf{u}_1}{\partial q} + \hat{B}_1 \mathbf{w}_1\right) = \frac{1}{|\mathbf{J}||\mathbf{J}^{-1}\mathbf{\hat{n}}_2|}\left(\tilde{B}_2 \frac{\partial \mathbf{u}_2}{\partial r} + \tilde{C}_2^T \frac{\partial \mathbf{u}_2}{\partial q} + \hat{B}_2 \mathbf{w}_2\right).\nonumber
\end{align}
The displacement continuity remains unchanged and can be written as 
$$
\mathbf{u}_1 = \mathbf{u}_2 \quad \text{on} \quad \Gamma.
$$
Now we have a formulation where the derivatives are in terms of the reference coordinates $q$ and $r$. We can now discretise the PDE using the summation-by-parts operators defined on a unit square and solve for the displacements.

\bibliographystyle{plain}
\bibliography{Siyang_References}

\end{document}